\appto\appendix{\addtocontents{toc}{\protect\setcounter{tocdepth}{1}}}
\appto\listoffigures{\addtocontents{lof}{\protect\setcounter{tocdepth}{1}}}
\appto\listoftables{\addtocontents{lot}{\protect\setcounter{tocdepth}{1}}}
\newtheorem{theorem}{Theorem}[section]
\newtheorem{proposition}[theorem]{Proposition}
\newtheorem{corollary}[theorem]{Corollary}
\theoremstyle{definition}
\newtheorem{definition}[theorem]{Definition}
\theoremstyle{remark}
\newtheorem{remark}[theorem]{Remark}
 \definecolor{carmine}{rgb}{0.59, 0.0, 0.09}
\definecolor{mediumpersianblue}{rgb}{0.0, 0.4, 0.65}
\definecolor{persianplum}{rgb}{0.44, 0.11, 0.11}
\renewcommand*\env@matrix[1][*\c@MaxMatrixCols c]{%
  \hskip -\arraycolsep
  \let\@ifnextchar\new@ifnextchar
  \array{#1}}
\newcommand{\be}{\begin{equation}}
\newcommand{\ee}{\end{equation}}
\newcommand{\al}{\alpha}
\newcommand{\bal}{\bar{\alpha}{}}
\newcommand{\bfv}{\mathbf{v}}
\newcommand{\dz}{\wedge}
\newcommand{\ba}{\begin{array}}
\newcommand{\ea}{\end{array}}
\newcommand{\beq}{\begin{eqnarray}}
\newcommand{\eeq}{\end{eqnarray}}
\newtheorem{lm}{lemma}
\newtheorem{thee}{theorem}
\newtheorem{proo}{proposition}
\newtheorem{co}{corollary}
\newtheorem{rem}{remark}
\newtheorem{deff}{definition}
\newcommand{\bd}{\begin{deff}}
\newcommand{\ed}{\end{deff}}
\newcommand{\bl}{\begin{lm}}
\newcommand{\el}{\end{lm}}
\newcommand{\bp}{\begin{proo}}
\newcommand{\ep}{\end{proo}}
\newcommand{\bt}{\begin{thee}}
\newcommand{\et}{\end{thee}}
\newcommand{\bc}{\begin{co}}
\newcommand{\ec}{\end{co}}
\newcommand{\brm}{\begin{rem}}
\newcommand{\erm}{\end{rem}}
\newcommand{\der}{{\rm d}}
\newcommand{\RR}{\mathbb{R}}
\newcommand{\PP}{\mathbb{P}}
\newcommand{\exd}{\mathrm{d}}
\newcommand{\cG}{\mathcal{G}}
\newcommand{\cF}{\mathcal{F}}
\newcommand{\cU}{\mathcal{U}}
\newcommand{\cE}{\mathcal{E}}
\newcommand{\cN}{\mathcal{N}}
\newcommand{\cC}{{\cN^*_+}}
\newcommand{\cO}{\mathcal{O}}
\newcommand{\cQ}{\mathcal{Q}}
\newcommand{\cD}{\mathscr{D}}
\newcommand{\cH}{\mathscr{H}}
\newcommand{\cA}{\mathcal{A}}
\newcommand{\cB}{\mathcal{B}}
\newcommand{\cZ}{\mathcal{Z}}
\newcommand{\rT}{\mathrm{T}}
\newcommand{\HH}{\mathbf{H}}
\newcommand{\rG}{\mathbf{G}}
\newcommand{\rP}{\mathbf{P}}
\newcommand{\fg}{\mathfrak{g}}
\newcommand{\bW}{\mathbf{W}}
\newcommand{\bG}{\mathbf{G}}
\newcommand{\w}{{\,{\wedge}\;}}
\newcommand{\Rho}{{\mbox{\sf P}}}
\newcommand{\bma}{\begin{pmatrix}}
\newcommand{\ema}{\end{pmatrix}}
\newcommand\m[1]{\begin{pmatrix}#1\end{pmatrix}}
\newcommand{\rpl}                         
{\mbox{$
\begin{picture}(12.7,8)(-.5,-1)
\put(0,0.2){$+$}
\put(4.2,2.8){\oval(8,8)[r]}
\end{picture}$}}
\newcommand{\lpl}                         
{\mbox{$
\begin{picture}(12.7,8)(-.5,-1)
\put(2,0.2){$+$}
\put(6.2,2.8){\oval(8,8)[l]}
\end{picture}$}}
\newcommand{\id}{\operatorname{id}}
\newcommand{\Ker}{\operatorname{Ker}}
\newcommand{\Span}{\mathrm{Span}}
\renewcommand{\cO}{\mathcal{N}}
\newcommand{\sL}{\mathscr{L}}
\renewcommand*{\p@section}{\S\,}
\renewcommand*{\p@subsection}{\S\,}
\renewcommand*{\p@subsubsection}{\S\,}
\DeclareFontFamily{U}{MnSymbolC}{}
\DeclareSymbolFont{MnSyC}{U}{MnSymbolC}{m}{n}
\DeclareFontShape{U}{MnSymbolC}{m}{n}{
    <-6>  MnSymbolC5
   <6-7>  MnSymbolC6
   <7-8>  MnSymbolC7
   <8-9>  MnSymbolC8
   <9-10> MnSymbolC9
  <10-12> MnSymbolC10
  <12->   MnSymbolC12}{}
\DeclareMathSymbol{\im}{\mathbin}{MnSyC}{'270}
\newcommand{\bbS}{\mathbb{S}}
\newcommand{\bbR}{\mathbb{R}}
\newcommand{\slg}{\mathbf{SL}}
\newcommand{\glg}{\mathbf{GL}}
\newcommand{\sla}{\mathfrak{sl}}
\newcommand{\gla}{\mathfrak{gl}}
\newcommand{\SOtt}{\mathbf{SO}_{2,2}}
\newcommand{\sott}{\mathfrak{so}_{2,2}}
\numberwithin{equation}{section}
\newcommand{\ttf}{(2,3,5)}
\newcommand{\W}{{Weyl}}
\begin{document}

\title{para-K\"ahler-Einstein  4-manifolds and \\
non-integrable twistor distributions }

\author{Gil Bor, Omid Makhmali${}^*$, and Pawe{\l} Nurowski}

 \address{\newline Gil Bor\\\newline
 Centro de Intevetigaci\'on en Matem\'aticas (CIMAT), Guanajuato, Mexico\\\newline
   \textit{Email address: }{\href{mailto:gil@cimat.mx}{\texttt{gil@cimat.mx}}}\\\newline\newline
   Omid Makhmali\\\newline
Center for Theoretical Physics, PAS, Al. Lotnik\'ow 32\slash46, 02-668 Warszawa, Poland \\\newline
    \textit{Email address: }{\href{mailto:omakhmali@cft.edu.pl}{\texttt{omakhmali@cft.edu.pl}}}\\\newline\newline
Pawe{\l} Nurowski\\\newline
Center for Theoretical Physics, PAS, Al. Lotnik\'ow 32\slash46, 02-668 Warszawa, Poland \\\newline
\textit{Email address: }{\href{mailto:nurowski@fuw.edu.pl}{\texttt{nurowski@fuw.edu.pl}}}
 }

\date{\today}
 
\subjclass{53C10, (53C15, 58A30, 58A15, 53C29)}
\keywords{Para-complex structure, para-K\"ahler structure, Einstein metric, Cartan geometry, Cartan reduction, Petrov type, (2,3,5)-distribution,  conformal structure}

\maketitle
\footnotetext{ Data sharing not applicable to this article as no datasets were generated or analysed during the current study.}

\begin{abstract}
  We study  the local geometry of 4-manifolds equipped with a \emph{para-K\"ahler-Einstein} (pKE)  metric,  a special type of  split-signature pseudo-Riemannian metric, and  their  associated  \emph{twistor distribution}, a  rank 2 distribution  on the   5-dimensional  total space of the circle bundle of self-dual  null 2-planes. For pKE metrics with non-zero scalar curvature this twistor  distribution has exactly two integral leaves and   is `maximally non-integrable' on their complement, a so-called \ttf-distribution.   Our main result  establishes a  simple  correspondence between   the anti-self-dual Weyl tensor of a pKE metric with non-zero scalar curvature  and the Cartan quartic of the associated  twistor distribution. This will be followed by a  discussion of  this correspondence for  general split-signature metrics which is shown to be much more involved.  We  use Cartan's method of equivalence to produce a large number of explicit examples of pKE metrics with non-zero scalar curvature whose anti-self-dual Weyl tensor have  special real  Petrov  type. In the case of real Petrov type $D,$ we obtain a complete local classification. Combined with the main result, this produces twistor distributions whose Cartan quartic has the same algebraic type as the  Petrov type of the constructed pKE metrics. In a similar manner, one can obtain  twistor distributions with Cartan quartic of arbitrary algebraic type. As a byproduct of our pKE examples  we naturally obtain para-Sasaki-Einstein metrics in five dimensions. Furthermore,  we study various Cartan geometries naturally associated to certain classes of pKE 4-dimensional metrics. We observe that in some geometrically distinguished cases the corresponding \emph{Cartan connections} satisfy the Yang-Mills equations. We then provide explicit examples of such Yang-Mills Cartan connections.   
\end{abstract}

\setcounter{tocdepth}{2} 
\tableofcontents 

\section{Introduction and main results}

The main purpose of this article is to give a detailed treatment of para-K\"ahler-Einstein structures in dimension 4 and examine their relation to maximally non-integrable distributions of rank 2 in dimension 5, referred  to as (2,3,5)-distributions. 

Our initial motivation for this article is twofold. Firstly, it is an extension of  the observation made in  \cite{BN-rolling} where, inspired by the rolling problem of Riemannian surfaces \cite{BH-G2}, a notion of \emph{projective rolling} was defined which gives rise to (2,3,5)-distributions. Consequently, it was observed that the (2,3,5)-distributions whose algebra of infinitesimal symmetries is maximal i.e.  the split real form of $\mathfrak g_2,$ can be obtained from such construction with a direct link to  the homogeneous \emph{para-K\"ahler-Einstein} metric on $\mathbf{SL}_3(\mathbb{R})/\mathbf{GL}_2(\mathbb{R})$ referred to as the \emph{dancing metric.} 
We point out that para-Hermitian structures and their variations  naturally appear in various geometric settings since they were first defined in \cite{Libermann}.  The reader may consult \cite{Survey} for a survey. 

Our second motivation comes from the twistorial construction of rank 2 distributions from  conformal structures of split signature in dimension four, referred to as \emph{twistor distributions}, described in \cite{AN-G2}.   In open subsets where the self-dual Weyl curvature is non-zero such distributions, which are naturally induced on the $\mathbb S^1$-bundle of self-dual null planes,  are  (2,3,5).  We will discuss that, in general, the fundamental invariant of twistor distributions, referred to as the \emph{Cartan quartic}, depend on the fourth jet of the components of the Weyl curvature of  the conformal structure. This poses  a basic question: whether the Cartan quartic of twistor distributions   can have any root type? In this article we answer this question affirmatively. As a by-product of our construction one naturally obtains explicit examples of \emph{para-Sasaki-Einstein} metrics.   

Before proceeding further, a brief definition of the geometric structures appearing in this article is in order. As will be defined in  \ref{sec:rudim-4-dimens}, an \emph{almost para-complex} structure on a manifold, $M,$ is defined as an  endomorphism $K:\mathrm{T}M\to \mathrm{T}M$ satisfying $K^2=\mathrm{Id}_{\mathrm{T}M}$ whose $\pm 1$-eigenspaces have rank 2.  As a result, unlike almost-complex structures, the eigenspaces of $K$ split each tangent space of $M$ into two transversal distributions. The integrability of these distributions induces a \emph{para-complex} structure on $M$ which results in two transversal foliations of  $M.$ Similarly, an \emph{almost-para-Hermitian}, \emph{para-Hermitian} and \emph{para-K\"ahler}  structure can be defined in terms of $K$ and a pseudo-Riemannian metric of split signature satisfying certain compatibility condition, as explained in  \ref{sec:almost-para-herm} and \ref{sec:pk-structure-an}.

The first objective of this article, presented in  \ref{sec:almost-para-herm-2}, is to give a unifying treatment of almost para-Hermitian and para-K\"ahler structures in dimension four via Cartan's method of equivalence and analyze the curvature decompositions in each case. To our knowledge such presentation has been absent in the literature. 

Our main topic of interest, treated in  \ref{sec:pke-4-manifolds}, is para-K\"ahler-Einstein (pKE) metrics,  defined as para-K\"ahler structures for which the metric is Einstein i.e. its trace-free Ricci tensor is zero. Because of our interest in non-integrable twistor distributions, we restrict our considerations to only those pKE metrics with  \emph{non-zero scalar curvature}. In \ref{sec:pk-structures-as} it is shown that such metrics define a Cartan  geometry of type $(\mathbf{SL}_3(\mathbb{R}),\mathbf{GL}_2(\mathbb{R})).$   In  \ref{sec:cart-reduct-symm} we investigate  five classes of pKE metrics for which the \emph{root type}, or the \emph{Petrov type}, of the anti-self-dual Weyl tensor $\W^-$ (see \ref{sec:cart-penr-petr}), is \emph{non-generic} and \emph{real}. Let us mention briefly that by root type, or the Petrov type,  of $\W^-$ we refer to the multiplicity  pattern of the roots of the  4th order polynomial obtained from the  representation of $\W^-$ as a binary quartic at each point. 
For these five classes of pKE metrics we carry out the Cartan reduction procedure case by case. This enables us to find all homogeneous models of pKE metrics in dimension 4. Moreover, we find explicit examples of pKE metrics with non-zero scalar curvature in every \emph{special real} root type of $\W^-$. In particular, Theorem \ref{thm:typeII-exa-1} gives explicit examples of pKE structures of Petrov type $II$, Theorem \ref{typeiii} gives examples for  Petrov type $III$, and Theorem \ref{typeiv} gives examples of Petrov type $N$. The pKE structures of  \emph{real} Petrov type $D$ are described in Theorem \ref{thm:petrov-types-D-LocNormForm}. This Petrov type is particularly interesting, since we found \emph{all possible} such pKE metrics with non-zero scalar curvature.  The method we have used to derive these examples is known as Cartan's  reduction method which is followed by integrating the reduced structure equations.   Moreover, Cartan's reduction method combined with   Cartan-K\"ahler analysis  is used to obtain the local generality of pKE metrics for each Petrov type in the real analytic case, as presented in Table \ref{tab:CK}. We point out that   \ref{sec:cart-reduct-symm} serves as an example of how effective Cartan's method of equivalence is in studying geometric structures.

In \ref{sec:2-3-5-dist} we discuss the relationship between pKE metrics and (2,3,5)-distributions.  More precisely, in \ref{sec:primer-2-3-5} we  briefly review the basic facts about (2,3,5)-distributions,  including the  Cartan connection, the Cartan quartic and associated conformal structure of split signature. 
In \ref{sec:2-3-5-dist-asd}, we recall   the fact that for any pKE structure, and more generally, any indefinite conformal structure in dimension four,  a rank 2 distribution, referred to as a \emph{twistor distribution}, is naturally induced on its  5-dimensional space of null self-dual planes, $\cO_+,$ and that another rank 2 twistor distribution is induced on the space of null anti-self-dual planes, $\cO_-;$ this  was originally observed in \cite{AN-G2}.    Moreover, the distribution induced on $\cO_+$ or $\cO_-$ is a (2,3,5) distribution at a point if and only if the self-dual Weyl tensor, $Weyl^+,$  or the anti-self-dual Weyl tensor, $Weyl^-,$ is non-zero at that point, respectively. After necessary coframe adaptations, which are performed at the beginning of \ref{sec:2-3-5-dist-asd}, we state our prime result on the surprising proportionality of the two quartics, the Cartan quartic of the (2,3,5) twistor distribution on $\cO_+$, and the anti-self-dual Weyl quartic of the pKE metric, as in the following theorem.
\newtheorem*{rmk4}{\bf Theorem \ref{cor:MainResult}}
\begin{rmk4}
\textit{Given a pKE metric for which the scalar curvature is non-zero,  the  Cartan quartic for the  non-integrable twistor distribution on $\cO_+$ is a non-zero multiple of the  quartic representation of the anti-self-dual Weyl curvature $Weyl^-$.} 
\textit{In particular,  the Cartan quartic of  the twistor distribution and the anti-self-dual Weyl curvature of the metric have the same root type.}
\end{rmk4} 
In Remark \ref{rmk:natural-identification} we explain the natural identification that underlies this theorem. 
As a result of  Theorem \ref{cor:MainResult}  and our examples of pKE metrics of non-generic Petrov types in \ref{sec:pke-4-manifolds}, we obtain a large class of explicit examples of  \emph{twistor distributions} for each special real algebraic type of the Cartan quartic.  
 Additionally, one obtains that the associated $(3,2)$ signature conformal structure on an open subset of $\cO_+$ has an Einstein representative whose  conformal holonomy is a subgroup of $\mathbf{SL}_3(\mathbb{R})\subset \mathbf{G}_2^*\subset\mathbf{SO}_{4,3};$ this was also observed in \cite{SW-G2}.  
 The main purpose of \ref{sec:2-3-5-ASD} is to show why the coincidence of the root types of the quartics, explained in Theorem  \ref{cor:MainResult}, is remarkable. This is done by obtaining the  Cartan quartic  for the twistor distribution on $\cO_-$. It is shown that the coefficients of the Cartan quartic on $\cO_-$ depend on the 4th jet of $Weyl^-$ and there is no obvious relation between the algebraic types of these quartics.

 Finally, we also mention that, starting in \ref{sec:pk-structures-as},  a number of \emph{Cartan geometries} are introduced which are naturally associated with pKE metrics in dimension 4. Since these Cartan geometries live on \emph{principal bundles} over 4-dimensional manifolds with split-signature \emph{conformal metrics}, one can study the \emph{vacuum Yang-Mills equations} for the corresponding \emph{Cartan} connections. As far as we know few papers are concerned with such studies. Here an honorable exception is a paper by S. Merkulov \cite{Merkulov_1984}, who established in 1984 that the vacuum Yang-Mills equations for the Cartan normal conformal connection of a 4-dimensional conformal structure $(M,[g])$, are equivalent to the vanishing of its \emph{Bach tensor}. Also in this vein is the work \cite{Nurowski_2003}, where in particular, the vacuum Yang-Mills equations for Cartan connections associated with 3-dimensional parabolic geometries of type $(\mathbf{SL}_3(\mathbb{R}),\mathbf{P}_{12}),$ and $(\mathbf{SU}_{2,1},\mathbf{P}_{12})$ were considered, in which $\mathbf{P}_{12}$ denotes the Borel subgroup. It turns out that Cartan geometries of various type appear in the process of Cartan reduction performed on a given Cartan geometry. It is also clear that  reduction of a Cartan geometry results in a principal bundle over the same base. Therefore, if  the vacuum Yang-Mills equations can be defined  for a certain Cartan geometry, they can also be defined for all   the Cartan geometries obtained from the reduction procedure. We take this approach in \ref{sec:pk-structures-as} and in the subsequent sections, where we reduce the initial $(\mathbf{SL}_3(\mathbb{R}),\mathbf{GL}_2(\mathbb{R}))$-type Cartan geometry to Cartan geometries of various types depending on the Petrov types. In particular, we find in Proposition \ref{so222}, as a consequence of Theorem \ref{so22}, that \emph{all} pKE 4-dimensional structures for which the Einstein constant is equal to -3 satisfy vacuum Yang-Mills equations for the $\sla_3(\bbR)$-valued Cartan connection of the associated $(\mathbf{SL}_3(\mathbb{R}),\mathbf{GL}_2(\mathbb{R}))$-type Cartan geometry. Similarly, in Theorem \ref{thm:typeII-exa-1} we give \emph{examples} of pKE metrics satisfying the vacuum Yang-Mills equations for the Cartan connection $\cB$ of a Cartan geometry of type $(\SOtt,\mathbf{T}^2)$, which can be obtained for pKE metrics of  \emph{real special} Petrov type.

The EDS calculations mentioned in the text are carried out using the \texttt{Cartan} package in Maple written by Jeanne Clelland and an exterior differential package for Mathematica written by Sotirios Bonanos.

\addtocontents{toc}{\setcounter{tocdepth}{1}}
 
\subsection*{Conventions}
In this article we will be working in the real smooth category.  Since our results are of local nature, the manifolds
can be taken to be the maximal open sets over which the assumptions made in each statement is valid. 

The manifold $M$ is always 4 dimensional equipped with a metric $g$ of split  signature. The 1-forms $\theta^1,\theta^2,\theta^3,\theta^4$ represent a coframe on $M$ with respect to which  $g=2\theta^1\theta^3+2\theta^2\theta^4$ where it is understood that the terms such as $\theta^1\theta^3$ represent symmetric tensor product of the 1-forms $\theta^1$ and $\theta^3$ i.e. 
\[\theta^a\theta^b=\textstyle{\frac 12(\theta^a\otimes\theta^b +\theta^b\otimes\theta^a)}\]   
 
Given an $n$-dimensional  manifold, $N,$ equipped with a coframe $\{\beta^1,\dots,\beta^n\},$ the corresponding set of frame will be expressed as $\{\frac{\partial}{\partial\beta^1},\dots,\frac{\partial}{\partial\beta^n}\}$ i.e. $\textstyle{\frac{\partial}{\partial\beta^b}}\im\beta^a=\delta^a_{~b}.$ Given a function $F:N\to\RR,$ the so-called coframe derivatives of $F,$ denoted by  $F_i:N\to \RR,$ are defined as
 \begin{equation}
   \label{eq:coframDer}
F_i=\textstyle{\frac{\partial}{\partial\beta^i}}\im\exd F.
 \end{equation}
 When a set of 1-forms on $N$ is introduced as $I=\{\gamma^1,\dots,\gamma^k\},$ it represents  the ideal that is algebraically generated by the 1-forms $\gamma^1,\dots,\gamma^k\in \rT^* N,$ and is called a \emph{Pfaffian system}. A Pfaffian system is called integrable if it satisfies the Frobenius condition, $\exd I\subset I,$ where $\exd$ is the exterior derivative. The integral manifolds of an integrable Pfaffian system are called its \emph{leaves}. Locally, around a generic point $x\in N,$ the leaves of an integrable Pfaffian system induce a smooth foliation which enables one to consider the quotient space of its leaves, referred to as the \emph{leaf space} of $I.$ Since our treatment is local, we can always work in  sufficiently small neighborhoods which allows one to define the leaf space of an integrable Pfaffian system.

\subsection*{Acknowledgments}
This project benefited from the Simons Semester \emph{Symmetry and Geometric Structures} at IMPAN in 2018. GB was supported by Conacyt grant A1-S-45886. OM would like to thank Kael Dixon and Niky Kamran  for their interest and helpful conversations. OM acknowledges the partial support of  the grant GA19-06357S from  the Czech Science Foundation, GA\v CR. The research leading to these results has also received funding from the Norwegian Financial Mechanism 2014-2021 with project registration number 2019/34/H/ST1/00636.

\addtocontents{toc}{\setcounter{tocdepth}{2}} 
 
\section{Almost para-Hermitian and para-K\"ahler structures}
\label{sec:almost-para-herm-2}

The goal of this section is to fix notation, give necessary definitions and recall some facts that will be needed in subsequent sections. 
 More precisely, in  \ref{sec:rudim-4-dimens}  we recall some basic facts about pseudo-Riemannian
metrics in dimension four. The notion of almost para-Hermitian structure and the decomposition of its curvature into irreducible components with respect to the action of its
structure group  is defined in  \ref{sec:almost-para-herm-3}. Furthermore, in  \ref{sec:almost-para-herm-3} we define the so-called \emph{Petrov type} of the Weyl curvature.
 In \ref{sec:pk-metrics} we define para-K\"ahler structures in terms of  additional integrability and compatibility conditions  imposed on an almost para-Hermitian structure. We derive their structure  equations, curvature decomposition and give a local coordinate expression in terms of a  potential function. We end the section by giving examples of pKE structures in terms of  potential functions which, as will be shown in  \ref{sec:cart-reduct-symm}, correspond to the only homogeneous models with non-zero scalar curvature.

\subsection{Rudiments of indefinite  pseudo-Riemannian metrics in dimension 4}
\label{sec:rudim-4-dimens}
In this section we briefly recall the decomposition of the space of 2-forms into self-dual and anti-self-dual 2-forms using the Hodge star operator. As a result,  two 5-dimensional circle bundles of self-dual null planes and anti-self-dual null planes is obtained for any indefinite pseudo-Riemannian metric in dimension four. Subsequently, we recall the structure equations of split signature metrics and their curvature decomposition.

\subsubsection{The Hodge star operator.}
\label{sec:selfd-antis-forms}
From now on let $(M,g)$ be a 4-dimensional real oriented manifold equipped with a split signature metric $g$. Locally, we can
always find a real coframe $(\theta^1, \theta^2, \theta^3, \theta^4)=(\al^1,\al^2,\bal^1,\bal^2)$ in which the metric takes the form: 
\begin{equation}
  \label{metric}
 g = g_{ab}\theta^a\theta^b = 2\theta^1\theta^3+2\theta^2 \theta^4=2\al^1\bal^1+2\al^2\bal^2.
 \end{equation}
We denote by $(\textstyle{\frac{\partial}{\partial\theta^1},\frac{\partial}{\partial\theta^2},\frac{\partial}{\partial\theta^3},\frac{\partial}{\partial\theta^4}})=(\textstyle{\frac{\partial}{\partial\alpha^1},\frac{\partial}{\partial\alpha^2},\frac{\partial}{\partial\bar\alpha^1},\frac{\partial}{\partial\bar\alpha^2}})$  the dual frame. 
The coframe 
$(\theta^1, \theta^2, \theta^3, \theta^4)$ is \emph{null} i.e. the metric 
$g$ has constant coefficients $g_{ab}$, with $ g_{13} = g_{31}  = g_{24} = g_{42}=1$ as the only non-vanishing ones,  which implies \emph{nullity}, $g(\frac{\partial}{\partial\theta^a},\frac{\partial}{\partial\theta^b})=0$ for $a=1,\dots,4$. Note that in this paper the coframe/frame on $M$ will be denoted by two notations $(\theta)$ or $(\al,\bal)$, and by $(\frac{\partial}{\partial\theta})$ or  $(\frac{\partial}{\partial\alpha},\frac{\partial}{\partial\bar\alpha})$, respectively. The reason for this redundancy will be made clear in the next sections, when sometimes one, and sometimes the other notation will be more convenient.

We assume  that the coframe  $(\al^1, \al^2, \bal^1, \bal^2)$ is positively oriented, therefore, the volume form $\mathbf{vol}_g$ on $ M$ can be expressed as
$$\mathbf{vol}_g =\al^1\wedge\al^2\wedge \bal^1\wedge \bal^2.$$
This enables one to define the Hodge star operator i.e. a linear map $*: \Lambda^2 \mathrm{T}^*M  \to
\Lambda^2 \mathrm{T}^* M $ such that
$$*\omega(X,Y)\mathbf{vol}_g=\omega\dz X^\flat\dz Y^\flat$$
Here $X^\flat$ is a 1-form  associated to a vector field $X$ such that $Y\im X^\flat=g(X,Y)$ for all vector fields $Y$ on $M.$ 

It is easy to see that 
$$*^2=\id_{\Lambda^2 \mathrm{T}^*M},$$
and therefore the operator $*$  splits $\Lambda^2\mathrm{T}^* M$  into the direct sum
$$\Lambda^2\mathrm{T}^*M=\Lambda^2_+\oplus\Lambda^2_-$$
of its eigenspaces $\Lambda^2_\pm$, corresponding to its respective $\pm 1$  eigenvalues.
In what follows we will frequently use the basis $(\sigma^1_\pm,\sigma^2_\pm,\sigma^3_\pm)$  of the 
$\pm 1$-eignespaces
of $*$, expressed in terms of the null coframe $(\al^1,\al^2,\bal^1,\bal^2)$  as
\begin{subequations}\label{asd-sd-null}
  \begin{align}
 \sigma^1_+&=\al^1\dz\al^2,\quad\sigma^2_+=\bal^1\dz\bal^2,\quad\sigma^3_+=\al^1\dz\bal^1+\al^2\dz\bal^2 \quad \in\Lambda^2_+\label{sip}\\
\sigma^1_-&=\al^1\dz\bal^2,\quad\sigma^2_-=\bal^1\dz\al^2, \quad\sigma^3_-=\al^1\dz\bal^1-\al^2\dz\bal^2\quad \in\Lambda^2_-\label{sim}
  \end{align}
\end{subequations}
As a result, in terms of   $(\textstyle{\frac{\partial}{\partial\alpha^1},\frac{\partial}{\partial\alpha^2},\frac{\partial}{\partial\bar\alpha^1},\frac{\partial}{\partial\bar\alpha^2}}),$ a basis for the self-dual and anti-self-dual bivectors in $\Lambda^2\mathrm{T M}=\Lambda^{2,+}\oplus\Lambda^{2,-}$ can be expressed as
\[
  \begin{aligned}    
    &\textstyle{\sigma^+_1=\frac{\partial}{\partial\alpha^1}\dz \frac{\partial}{\partial\alpha^2},\quad \sigma^+_2=\frac{\partial}{\partial\bar\alpha^1}\dz \frac{\partial}{\partial\bar\alpha^2},\quad \sigma^+_3= \frac{\partial}{\partial\alpha^1}\dz \frac{\partial}{\partial\bar\alpha^1}+\frac{\partial}{\partial\alpha^2}\dz \frac{\partial}{\partial\bar\alpha^2}  \quad \in\Lambda^{2,+},}\\
  &\textstyle{  \sigma^-_1=\frac{\partial}{\partial\alpha^1}\dz \frac{\partial}{\partial\bar\alpha^2},\quad \sigma^-_2=\frac{\partial}{\partial\alpha^2}\dz \frac{\partial}{\partial\bar\alpha^1},\quad \sigma^-_3=\frac{\partial}{\partial\alpha^1}\dz \frac{\partial}{\partial\bar\alpha^1}-\frac{\partial}{\partial\alpha^2}\dz \frac{\partial}{\partial\bar\alpha^2}  \quad \in\Lambda^{2,-}.}
  \end{aligned}
\]
At each point $p$ of $M$ the bivector $\frac{\partial}{\partial\alpha^1}\dz \frac{\partial}{\partial\alpha^2}$  defines a \emph{null} plane $N_+=\Span\{\frac{\partial}{\partial\alpha^1},\frac{\partial}{\partial\alpha^2}\}$. Recall that a plane $N_+$ is  null if $g(X,X)=0$  for all  $X\in N_+$. At every point $p\in M$ we then have the space $\cN_p$ of all null planes at $p$.

Let us consider a pair $(a,N_p)$ where  $a\in \SOtt$ and $N_p\in\cN_p.$ The group $\SOtt$ acts naturally on the space $\cN_p$ via:
\[(a,N_p)\,\to\, a\cdot N_p=\{aX\,|\, X\in N_p \}.\]
This action decomposes $\cN_p$ into {\em two} orbits
$$\cN_p=\cO_{p+}\sqcup\cO_{p-}.$$ 
Each of these orbits is diffeomorphic to a circle $\cO_{p\pm}\cong\bbS^1$. Take $N_{p+}\in \cO_{p+}$ and assume $N_{p+}=\Span\{\frac{\partial}{\partial\alpha^1}|_{p},\frac{\partial}{\partial\alpha^2}|_{p}\}.$ Note that its defining bivector $\frac{\partial}{\partial\alpha^1}\w\frac{\partial}{\partial\alpha^2}|_{p}$ is \emph{self-dual}. Since the action of $\SOtt$ does not change self-duality of null planes, the orbit $\cO_{p+}$ is called  the \emph{space of self-dual null planes} at $p.$ Consequently the orbit $\cO_{p-}$ is comprised of null planes defined by  anti-self-dual bivectors and  is therefore called the \emph{space of anti-self-dual null planes} at $p.$

More explicitly, at $p\in M$ the  set of self-dual  and anti-self-dual   null planes can be  parametrized, respectively, by $\lambda,\mu\in \RR\cup\{\infty\}$ in the following way
\begin{equation}
  \label{eq:sd-asd-parametrized}
 \cO_{p+}= \mathrm{Ker}\{\theta^1+\mu \theta^4,\theta^2-\mu\theta^3\},\quad \cO_{p-}= \mathrm{Ker}\{\theta^2-\lambda \theta^1,\theta^3+\lambda\theta^4\}.
  \end{equation}
This parametrization will be used in \ref{sec:2-3-5-dist}. The bundles $\cN_{\pm}:=\underset{p}\bigcup \cN_{p\pm}$ equipped with the projections 
\begin{equation}
  \label{eq:nupm} 
  \nu_+\colon\cN_+\to M,\qquad \nu_-\colon\cN_-\to M,
\end{equation}
where $(\nu_\pm)^{-1}(p)=\cN_{p_\pm},$ at  $p\in M,$  are referred to by various names, including circle \emph{twistor} bundles (\cite{AN-G2}) or bundles of real $\alpha$-planes and $\beta$-planes \cite{AG-Book}. In what follows we will frequently refer to the \emph{circle} bundles $\cN_+$ and $\cN_-$ as the  bundle of self-dual and anti-self-dual null planes, respectively.

  \subsubsection{Structure equations.}
\label{sec:structure-equations}
 The null coframe $(\theta^1,\theta^2,\theta^3,\theta^4)$ uniquely defines the
Levi-Civita connection 1-forms $\Gamma^a_{~b}$ via the first structure equations: 
 $$\begin{aligned}
&\der\theta^a+\Gamma^a_{~b}\dz\theta^b=0&\mathrm{(torsionfreeness)},\\
&g_{ac}\Gamma^c_{~b}+g_{bc}\Gamma^c_{~a}=0&\mathrm{(metricity)}.
\end{aligned}
$$
As a result, the Riemann curvature of the metric $g$, given by  the $\sott$-valued 2-form $\tfrac12R^a_{~bcd}\theta^c\dz\theta^d,$ is defined via the second structure equations
\begin{equation}
  \label{c2se}
\der\Gamma^a_{~b}+\Gamma^a_{~c}\dz\Gamma^c_{~b}=\tfrac12R^a_{~bcd}\theta^c\dz\theta^d.
\end{equation}
Via the action of $\SOtt,$ the Riemann curvature decomposes into the   components known as the traceless Ricci tensor, Weyl curvature and the scalar curvature.  The Ricci tensor is defined as $R_{ab}=R^c_{~acb}$  and the scalar curvature is $R=R_{ab}g^{ab}$, where $g^{ab}g_{bc}=\delta^a_{~c}.$ The trace-free part of the Ricci tensor is defined as $\overset{\circ}R_{ab}=R_{ab}-\tfrac14 Rg_{ab}$. 
Defining  the  Schouten tensor  as
$$\Rho_{ab}=\tfrac12 R_{ab}-\tfrac{1}{12}Rg_{ab},$$
the Weyl tensor is expressed as
\begin{equation}
  \label{eq:Weylabcd}
  C^a_{~bcd}=R^a_{~bcd}+g_{ad}\Rho_{cb}-g_{ac}\Rho_{db}+g_{bc}\Rho_{da}-g_{bd}\Rho_{ca}.
\end{equation}
 
Solving the metricity condition for  the first structure equations, it follows that  the connection 1-forms  $\Gamma^a_{~b}$ can be expressed as
\begin{equation}
  \label{LC}
\Gamma^a_{~b}=\bma[cc|cc]
\Gamma^1_{~1}&\Gamma^1_{~2}&0&\Gamma^1_{~4}\\
\Gamma^2_{~1}&\Gamma^2_{~2}&-\Gamma^1_{~4}&0\\
\cmidrule(lr){1-4}
0&-\Gamma^4_{~1}&-\Gamma^1_{~1}&-\Gamma^2_{~1}\\
\Gamma^4_{~1}&0&-\Gamma^1_{~2}&-\Gamma^2_{~2}
\ema.
\end{equation}
Consequently, the  torsion-free condition yields
\begin{equation}
  \label{ca1}
\begin{aligned}
&\der\al^1=-\Gamma^1_{~1}\dz\al^1-\Gamma^1_{~2}\dz\al^2-\Gamma^1_{~4}\dz\bal^2\\
&\der\al^2=-\Gamma^2_{~1}\dz\al^1-\Gamma^2_{~2}\dz\al^2+\Gamma^1_{~4}\dz\bal^1\\
&\der\bal^1=\Gamma^4_{~1}\dz\al^2+\Gamma^1_{~1}\dz\bal^1+\Gamma^2_{~1}\dz\bal^2\\
&\der\bal^2=-\Gamma^4_{~1}\dz\al^1+\Gamma^1_{~2}\dz\bal^1+\Gamma^2_{~2}\dz\bal^2.
\end{aligned}
\end{equation}
Passing to the second structure equations, one notes that due to the symmetries of the Riemann tensor, $R_{abcd}=R_{[ab][cd]}=R_{cdab}$, setting 
$R^{ab}{}_{cd}=g^{ae}g^{bf}R_{efcd}$,  one obtains a linear map  given by  
$$Riemann:\Lambda^2\mathrm{T}^*M\to\Lambda^2\mathrm{T}^*M,\quad\quad 
Riemann(\theta^a\dz\theta^b)=\tfrac12 R^{ab}{}_{cd}\theta^c\dz\theta^d.$$ 
Since 
$\Lambda^2\mathrm{T}^*M=\Lambda^2_+\oplus\Lambda^2_-$, the matrix form of this map can be expressed as
\begin{equation}
  \label{riedec}
Riemann=\bma[c||c] Weyl^++\tfrac{1}{12}R \id_{\Lambda^2_+}&\overset{\circ}{Ricci}\\\cmidrule(lr){1-2}\morecmidrules\cmidrule(lr){1-2}
\overset{\circ}{Ricci}{}^* &Weyl^-+\tfrac{1}{12}R \id_{\Lambda^2_-}\ema.
\end{equation}
Here $Weyl^+$ and $Weyl^-$ are traceless $3\times 3$ matrices, and $\overset{\circ}{Ricci}{}^*$ is a $3\times 3$ matrix related to the $3\times 3$ matrix of trace-free Ricci tensor, $\overset{\circ}{Ricci},$ via $\overset{\circ}{Ricci}{}^*=(H\overset{\circ}{Ricci}H^{-1})^T$, where $H=\tiny{\bma 0&0&-1\\0&2&0\\-1&0&0\ema}$.  
The matrices $HWeyl^+, HWeyl^-$ are symmetric and their components will be denoted by $(\Psi_0{}',\Psi_1{}',\Psi_2{}',\Psi_3{}',\Psi_4{}')$ and  $(\Psi_0,\Psi_1,\Psi_2,\Psi_3,\Psi_4)$ respectively. 
 Moreover, let us denote the the 9-components of $\overset{\circ}{Ricci}$ by $(\Rho_{11},\Rho_{12},\Rho_{22},\Rho_{14}, \Rho_{13}-\Rho_{24},\Rho_{23},\Rho_{33},\Rho_{34},\Rho_{44}).$ It follows that the scalar curvature can be written as $R=12(\Rho_{13}+\Rho_{24})$,  as given in  \eqref{eq:RiemmanCurv1}. Since the Ricci tensor $R_{ab}$ and the Schouten tensor $\Rho_{ab}$ are linearly related,   we will be using the Schouten tensor in the  sequel. As a result, the second structure equations \eqref{c2se} read
 \begin{equation}
   \label{ca21}
\begin{aligned}
\tfrac12\der&(\Gamma^1_{~1} +  \Gamma^2_{~2})+\Gamma^1_{~4}\dz\Gamma^4_{~1}=
\\ 
& 
 -\Psi_3' \sigma^1_+ - 
 \Psi_1' \sigma^2_+-\tfrac12 (2\Psi_2' - \Rho_{13}-\Rho_{24})
 \sigma^3_++\Rho_{14} \sigma^1_-  -\Rho_{23} \sigma^2_- + 
 \tfrac12(\Rho_{13}-\Rho_{24})\sigma^3_-\\
\der&\Gamma^4_{~1}+\Gamma^4_{~1}\dz(\Gamma^1_{~1}+\Gamma^2_{~2})=\\ 
&-\Psi_4' \sigma^1_+  - (\Psi_2' + \Rho_{13} + \Rho_{24})\sigma^2_+-\Psi_3'\sigma^3_+ - \Rho_{11} \sigma^1_-- \Rho_{22} \sigma^2_-+
\Rho_{12}\sigma^3_-\\
\der&\Gamma^1_{~4} +(\Gamma^1_{~1}+\Gamma^2_{~2})\dz \Gamma^1_{~4}=\\ 
& (\Psi_2' + \Rho_{13} + 
    \Rho_{24}) \sigma^1_++ 
 \Psi_0' \sigma^2_++\Psi_1'\sigma^3_+
 + \Rho_{44} \sigma^1_- +\Rho_{33} \sigma^2_- + 
\Rho_{34}\sigma^3_-, \end{aligned}
 \end{equation}
with analogous equations for the `unprimed' objects:
\begin{equation}
\label{ca22}
\begin{aligned}
\tfrac12\der&(\Gamma^1_{~1} -  \Gamma^2_{~2})+\Gamma^1_{~2}\dz\Gamma^2_{~1}=
\\ 
& 
 \Psi_1 \sigma^1_-+\Psi_3 \sigma^2_- -\tfrac12 (2\Psi_2 - \Rho_{13}-\Rho_{24})
 \sigma^3_- +\Rho_{12} \sigma^1_+ -\Rho_{34} \sigma^2_+ + 
 \tfrac12(\Rho_{13}-\Rho_{24})\sigma^3_+\\
\der&\Gamma^2_{~1} +\Gamma^2_{~1}\dz(\Gamma^1_{~1}-\Gamma^2_{~2})=\\ 
&- 
 \Psi_0 \sigma^1_-- (\Psi_2 + \Rho_{13} + \Rho_{24}) \sigma^2_-+\Psi_1\sigma^3_- 
-\Rho_{11} \sigma^1_+  - \Rho_{44} \sigma^2_+ + 
\Rho_{14}\sigma^3_+\\
\der&\Gamma^1_{~2}+(\Gamma^1_{~1}-\Gamma^2_{~2})\dz\Gamma^1_{~2}=\\ 
& (\Psi_2 + \Rho_{13} + \Rho_{24})
  \sigma^1_-+\Psi_4 \sigma^2_-  -\Psi_3\sigma^3_- + \Rho_{22} \sigma^1_+ + \Rho_{33} \sigma^2_++
\Rho_{23}\sigma^3_+.
\end{aligned}
\end{equation}
Here we used the respective basis $(\sigma^1_\pm,\sigma^2_\pm,\sigma^3_\pm)$ of  $\Lambda^2_\pm$, as defined in \eqref{asd-sd-null}.

\begin{remark}\label{rmk:2approaches}
  The usual way of employing the system of equations \eqref{ca1}, \eqref{ca21}-\eqref{ca22}, is to think about $(\theta^1,\theta^2,\theta^3,\theta^4)$ as a given coframe on $M$, and to use the equations \eqref{ca1}, \eqref{ca21}-\eqref{ca22} to uniquely determine the Levi-Civita connection forms $\Gamma^i_{~j}$, and consequently the curvature $R^a{}_{bcd}$ of $g$, in terms of this chosen coframe. Alternatively, in the language of  $G$-structures, one observes that $\theta^i$'s are ambiguous up to an action of $\SOtt$ since they were chosen so that \eqref{metric} is satisfied. One says that  $\SOtt$ is the \emph{structure group} of the pseudo-Riemannian structure. As a result, one can define a principal  $\SOtt$-bundle $\pi\colon\cF\to M$, as the bundle of all null coframes with respect to which \eqref{metric} holds. In this language the $\theta^i$'s give rise to a  lifted null coframe at each point of $\cF$ and the $\Gamma^i_j$'s mimic the Maurer-Cartan forms of $\sott$; they are uniquely defined  on $\cF$ as a result of the torsion-free condition. Moreover, these 1-forms, together with $\theta^i$s, form a basis of 1-forms at every point of $\cF$. Hence, one obtains a unique coframe at each point of $\cF$, consisting of 1-forms $(\theta^i,\Gamma^i{}_j)$, which is transformed equivariantly in each fiber of $\cF$ and satisfy the equations \eqref{ca1}, \eqref{ca21}-\eqref{ca22}  everywhere on $\cF$. We refer to \cite{Gardner,Olver}  for an overview of this  exterior differential system (EDS) viewpoint. 
\end{remark}

\subsection{Almost para-Hermitian metrics}
\label{sec:almost-para-herm-3}
In this section we define almost para-complex structures and almost para-Hermitian metrics. We obtain the structure equations and curvature decomposition. Using the curvature decomposition,  we recall the well-known Petrov classification of such structures.
\subsubsection{Definitions}
\label{sec:almost-para-herm}
An \emph{almost para-Hermitian structure} $(M,g,K)$ on a 4-dimensional manifold $M$ with a metric $g$ of signature $(+,+,-,-)$ is defined in terms of an endomorphism 
$$K:\mathrm{T}M\to\mathrm{T}M,$$ 
such that 
\[\hspace{4.5cm}
 K^2=\id_{\mathrm{T}M},\qquad \qquad  (K~\mathrm{paracomplex}),\]
whose $\pm 1$-eigenvalues have rank 2 and, additionally, satisfies the compatibility condition
\[
 g(KX,KY)=-g(X,Y),\qquad \forall X,Y\in \mathrm{T}M,\qquad (K~\mathrm{metric~compatible}).\]
An almost para-Hermitian structure $(M,g,K)$ distinguishes a pair of rank 2 distributions $\cH$ and $\bar{\cH}$ on $M$ defined as the $\pm 1$-eigenspaces of $K$ i.e. 
\begin{equation}
  \label{eq:H_barH_null_distr}
  {\cH}=(K+\id_{\mathrm{T}M})\mathrm{T}M,\quad\mathrm{and}\quad \bar{\cH}=(K-\id_{\mathrm{T}M})\mathrm{T}M.
\end{equation}
It follows that  $$\mathrm{T}M={\cH}\oplus\bar{\cH}.$$ Moreover $\cH$ and $\bar{\cH}$ are \emph{null} with respect to $g$ and must belong to the same  orbit in the space ${\cN}={\cO}_+\sqcup {\cO}_-$ of all null planes.

An almost para-Hermitian structure $(M,g,K)$ is called \emph{half}-para-Hermitian if precisely one of $\cH$ or $\bar{\cH}$ are integrable i.e. either $ [{\cH},{\cH}]\subset{\cH}$ or $  [\bar{\cH},\bar{\cH}]\subset\bar{\cH}.$ If $\cH$ and $\bar{\cH}$ are \emph{both} integrable i.e. 
$$[{\cH},{\cH}]\subset{\cH},\quad\mathrm{and}\quad  [\bar{\cH},\bar{\cH}]\subset\bar{\cH},
$$
then the almost-para-Hermitian structure $(M,g,K)$ is called \emph{para-Hermitian}.

An almost para-Hermitian structure $(M,g,K)$ defines a \emph{para-K\"ahler} 2-form
\begin{equation}
  \label{eq:2form1}
\rho(X,Y):=g(KX,Y).
\end{equation}
The fact that $\rho$ is skew symmetric, $\rho(X,Y)=-\rho(Y,X)$,  follows from the algebraic properties of $K$.  

An almost para-Hermitian structure $(M,g,K)$ is called \emph{almost para-K\"ahler} if and only if  the 2-form $\rho$ is closed i.e.
\begin{equation*}
\der\rho=0.
 \end{equation*}
 An almost para-Hermitian structure $(M,g,K)$ is  para-K\"ahler if it is para-Hermitian and almost para-K\"ahler i.e. $\cH$ and $\bar\cH$ are integrable and $\rho$ is closed. 
\subsubsection{Almost para-Hermitian structure in an adapted frame.} 
\label{sec:almost-para-herm-1}
A coframe $(\al^1,\al^2,\bal^1,\bal^2)$
 on a 4-dimensional manifold $M$ is \emph{adapted} to an almost para-Hermitian structure $(M,g,K)$ if and only if 
\begin{equation}
  \label{ac2}
\begin{aligned}
&g=2\al^1\bal^1+2\al^2\bal^2\\
&K=\textstyle{\al^1\otimes\frac{\partial}{\partial\alpha^1} +\al^2\otimes\frac{\partial}{\partial\alpha^2}  -\bal^1\otimes\frac{\partial}{\partial\bar\alpha^1}-\bal^2\otimes\frac{\partial}{\partial\bar\alpha^2}}.
\end{aligned}
\end{equation}
It follows that in such adapted coframes
\begin{equation}
\label{ac3}
\rho=\al^1\dz\bal^1+\al^2\dz\bal^2,
\end{equation}

  At every point of a 4-dimensional almost para-Hermitian manifold $(M,g,K)$ the stabilizer $\mathbf{H}\subset \mathbf{GL}_4(\bbR)$ of the pair $(g,K)$, i.e.
  $$\mathbf{H}=\{U\in\glg_4(\bbR)\,:\,g(UX,UY)=g(X,Y)\,\,\&\,\,K(UX)=UK(X)\},$$
  satisfies
  $$\mathbf{H}\cong \glg_2(\bbR)\subset\SOtt.$$
  Expressing  $\mathbf{H}$ in the coframe  $(\theta^1,$ $\theta^2,\theta^3,\theta^4)=(\al^1,\al^2,\bal^1,\bal^2)$ as in \eqref{ac2}, provides the 4-dimensional reducible representation $$T:\mathbf{H}\to\glg_4(\bbR)$$ of $\mathbf{H}$ given by
  \begin{equation} \label{ga2}
    T(U)=\bma
A&0\\
0&(A^{T})^{-1}
\ema\quad\mathrm{with}\quad A=
\begin{pmatrix}
  a_{11} & a_{12}\\
  a_{21} & a_{22}
\end{pmatrix}
\in \glg_2(\bbR)
.\end{equation}

As a result the geometry arising from the pair $(g,K)$ reduces the structure group of $M$ from $\glg_4(\RR)$ to $\glg_2(\bbR)$ via representation $T.$ 
 The $\glg_2(\bbR)$ irreducible decomposition of  $\bbR^4$ as a $\glg_2(\bbR)$-module is $\bbR^4=\bbR^2\oplus(\bbR^2)^*$. It reflects the splitting of $\mathrm{T}M$, into $\mathrm{T}M=\cH\oplus\bar{\cH}$.

\begin{proposition}\label{opr1}
  Every almost-para-Hermitian structure $(M,g,K)$ on a 4-dimensional manifold locally admits an adapted coframe. If $(\theta^a)=(\al^1,\al^2,\bal^1,$ $\bal^2)$
is a coframe adapted to $(M,g,K)$ then the most general adapted coframe is given by
\begin{equation}\label{ga1}
\tilde{\theta}^a=T(U)^a_{~b}\theta^b,
\end{equation}
where the  $4\times 4$ matrices $T(U)=(T(U)^a{}_b)$ are as in  \eqref{ga2}.
\end{proposition}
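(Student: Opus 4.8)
The plan is to prove the two assertions of Proposition~\ref{opr1} separately: first the local existence of an adapted coframe, and then the fact that, once one adapted coframe is fixed, every other is obtained from it by the $\mathbf{H}$-action \eqref{ga1}.

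For \textbf{existence}, I would start from the eigenspace splitting $\mathrm{T}M=\cH\oplus\bar{\cH}$ of \eqref{eq:H_barH_null_distr}, where $\cH$ and $\bar{\cH}$ are the rank~$2$ distributions $(K\pm\id_{\mathrm{T}M})\mathrm{T}M$, both null for $g$ (this is immediate from $g(KX,KY)=-g(X,Y)$ applied to eigenvectors, and is already recorded in the text). The key observation is that, since $g$ is nondegenerate on $\mathrm{T}M$ while it vanishes identically on each of the two summands, the induced pairing $g\colon\cH\times\bar{\cH}\to\RR$ must be nondegenerate, so $\bar{\cH}\cong\cH^{*}$. The construction then runs: choose any local frame $\frac{\partial}{\partial\alpha^1},\frac{\partial}{\partial\alpha^2}$ of the rank~$2$ distribution $\cH$, which exists locally, and use nondegeneracy of the pairing to pick the unique frame $\frac{\partial}{\partial\bar\alpha^1},\frac{\partial}{\partial\bar\alpha^2}$ of $\bar{\cH}$ satisfying $g(\frac{\partial}{\partial\alpha^i},\frac{\partial}{\partial\bar\alpha^j})=\delta^i_{~j}$. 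The dual coframe $(\al^1,\al^2,\bal^1,\bal^2)$ then realizes $g=2\al^1\bal^1+2\al^2\bal^2$ by construction, while $K$ takes exactly the form in \eqref{ac2} because $\frac{\partial}{\partial\alpha^i}$ and $\frac{\partial}{\partial\bar\alpha^i}$ span the $+1$ and $-1$ eigenspaces of $K$; hence the coframe is adapted.

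For the \textbf{second assertion}, suppose $(\theta^a)$ and $(\tilde\theta^a)$ are both adapted, and write $\tilde\theta^a=M^a_{~b}\theta^b$ for a $\glg_4(\RR)$-valued function $M$. Adaptedness fixes the coordinate expressions of the fixed tensors $g$ and $K$ to the \emph{same} standard matrices in both coframes. Comparing the two expressions, and using that $g$ is a covariant $2$-tensor (so its coefficient matrix transforms by congruence, forcing $g(M\,\cdot\,,M\,\cdot\,)=g$) while $K$ is an endomorphism (so it transforms by conjugation $M K M^{-1}$, forcing $M$ to commute with $K$), shows pointwise that $M$ satisfies precisely the two defining conditions of the stabilizer $\mathbf{H}$. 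Therefore $M(p)\in\mathbf{H}$ at each point, and by the explicit normal form \eqref{ga2} already established one gets $M=T(U)$ for a $\glg_2(\RR)$-valued $U$, which is \eqref{ga1}. Conversely, since every $T(U)$ preserves $g$ and $K$ by the very definition of $\mathbf{H}$, the coframe $\tilde\theta^a=T(U)^a_{~b}\theta^b$ is again adapted, so the adapted coframes form a single $\mathbf{H}$-orbit.

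The only genuinely non-routine point is the nondegeneracy of the induced pairing between the two null eigendistributions in the existence step; everything else is bookkeeping. The step where I would be most careful is stating the transformation laws correctly, namely congruence $M^{T}GM=G$ for the metric coefficients versus conjugation $MKM^{-1}$ for the para-complex endomorphism, since this is where index or sign slips are easiest and is exactly where the match with the defining conditions of $\mathbf{H}$ must be made precise.
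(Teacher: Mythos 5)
Your proof is correct. The paper states Proposition \ref{opr1} without any proof, treating it as an immediate consequence of the stabilizer computation \eqref{ga2} that directly precedes it, and your argument — local existence via the nondegenerate $g$-pairing between the null eigendistributions $\cH$ and $\bar{\cH}$ (so a frame of $\cH$ determines a unique $g$-dual frame of $\bar{\cH}$), together with the orbit statement obtained by identifying the transition matrix pointwise with an element of $\mathbf{H}$ via the congruence/conjugation transformation laws — is precisely the routine verification the paper leaves to the reader, fully consistent with its framework.
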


\subsubsection{$\glg_2(\bbR)$ invariant curvature decomposition}
\label{sec:glg2-bbr-invariant}
Any coframe adapted to $(M,g,K)$ is in particular a null coframe, as in \eqref{metric}. Thus to analyze the properties of $(M,g,K)$ we can use the structure equations \eqref{ca1}, \eqref{ca21}-\eqref{ca22}.  
 The stabilizer $\mathbf{H}\cong\glg_2(\bbR)$ of the pair $(g,K)$ is therefore the \emph{structure group} of the almost para-Hermitian structure $(M,g,K)$. It acts, via the representation $T$, on any adapted coframe $(\theta^a)$ as in \eqref{ga1}. The induced transformation of the Levi-Civita connection  \eqref{LC} and its curvature is given by
\begin{subequations}
  \label{troa}
\begin{align}
&\theta^a\to \tilde{\theta}{}^a=T(U)^a_{~b}\theta^b,   \label{troa1}\\
&\Gamma^a_{~b}\to \tilde{\Gamma}{}^a_{~b}=T(U)^a_{~c}\Gamma^c_{~d} T(U)^{-1d}{}_b-dT(U)^a_{~c} T(U)^{-1c}{}_b,   \label{troa2}\\
&R^a_{~bcd}\to\tilde{R}{}^a_{~bcd}=T(U)^a_{~e}R^e_{~fgh}T(U)^{-1f}{}_{b}T(U)^{-1g}{}_{c}T(U)^{-1h}{}_{d}.  \label{troa3}
\end{align}
\end{subequations}
The  transformations \eqref{troa3} gives  the action of $\glg_2(\bbR)$ on the 20-dimensional vector space  of the \emph{curvature tensors} $R^a_{~bcd}$. Using this action one can decompose the curvature tensor into its indecomposable components. First we define 10 vector spaces defined  in terms of the curvature components 
$(\Rho_{ab},\Psi_\mu,\Psi_\mu')$, $a,b=1,2,3,4$, $\mu=0,1,2,3,4$ as 
\begin{equation}
  \label{eq:Decomp1}
\begin{aligned}
Ric^3_{1}=&\{\Pi_{AB}~\mathrm{s.t.}~\Pi_{AB}=\bma \Rho_{11}&\Rho_{12}\\\Rho_{12}&\Rho_{22}\ema\}\\
Ric^3_{2}=&\{\bar{\Pi}_{AB}~\mathrm{s.t.}~\bar{\Pi}_{AB}=\bma \Rho_{33}&\Rho_{34}\\\Rho_{34}&\Rho_{44}\ema\}\\
Ric^3_{3}=&\{P^A_{~B}~\mathrm{s.t.}~P^A_{~B}=\bma \Rho_{13}-\Rho_{24}&2\Rho_{23}\\2\Rho_{14}&-\Rho_{13}+\Rho_{24}\ema\}\\
Scal^1=&\{\Rho_{13}+\Rho_{24}\}\\
Weyl^5_{1}=&\{W_{ABCD}=W_{(ABCD)}~\mathrm{s.t}\\&W_{0000}=\Psi_0,~W_{0001}=\Psi_1,~W_{0011}=\Psi_2,~W_{0111}=\Psi_3,~W_{1111}=\Psi_4,\}\\
Weyl^1_{1}=&\{\Psi_0'\},\quad Weyl^1_{2}=\{\Psi_1'\},
\quad Weyl^1_{3}= \{\Psi_2'\},\quad Weyl^1_{4}=\{\Psi_3'\},\quad Weyl^1_{5}=\{\Psi_4'\}.
\end{aligned}
\end{equation}
Here the (spinorial) indices $A,B,C,D=0,1$, and the equations $W_{ABCD}=W_{(ABCD)}$ mean that $W_{ABCD}$ is totally symmetric in indices $A,B,C,D$. 
The notation for the spaces $Ric^i_j$ and $Weyl^i_j$ is such that the upper index indicates the dimension of each space, and the lower index enumerates spaces of the same dimension. In particular the spaces $Ric^3_1$ and $Ric^3_2$  have dimensions 3 as spaces of symmetric $2\times2$ matrices, $Ric^3_3$ has dimension 3 as the space of traceless $2\times2$ matrices, and $Weyl^5_1$ has dimension 5 as the space of  symmetric tensors of degree 4 in dimension 2.

\begin{proposition}\label{riedech}
The $\glg_2(\bbR)\subset\SOtt$ invariant decomposition of the 20-dimensional curvature space, $Riemann^{20},$ of an almost para-Hermitian structure $(M,g,K)$ in dimension 4 is
\begin{equation}\label{decu}
\begin{aligned}
Riemann^{20}=&\underbrace{Ric^3_1\oplus Ric^3_2\oplus Ric^3_3}_{\text{traceless Ricci}}\oplus\\
&\underbrace{Scal^1}_{\text{Ricci scalar}} \oplus\\&\underbrace{ Weyl^1_1\oplus Weyl^1_2\oplus Weyl^1_3\oplus Weyl^1_4\oplus Weyl^1_5}_{\text{self-dual Weyl}}\oplus\\&\underbrace{Weyl^5_1.}_{\text{anti-self-dual Weyl}}
\end{aligned}  
\end{equation}
\end{proposition}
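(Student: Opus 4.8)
The plan is to combine the standard $\SOtt$-irreducible decomposition of the space of algebraic curvature tensors with an explicit analysis of how the reduced structure group $\glg_2(\bbR)$, embedded via the representation $T$ of \eqref{ga2}, acts on the self-dual and anti-self-dual $2$-forms. The $20$-dimensional space $Riemann^{20}$ of tensors obeying the Riemann symmetries and the first Bianchi identity already decomposes under the full pseudo-orthogonal group $\SOtt$ into the scalar $Scal^1$, the nine-dimensional trace-free Ricci space, and the two five-dimensional spaces $Weyl^+$ and $Weyl^-$; this is exactly the block structure displayed in \eqref{riedec}. Because $\glg_2(\bbR)\subset\SOtt$, each of these summands is automatically $\glg_2(\bbR)$-invariant, so the problem reduces to decomposing each one further and checking that the resulting pieces are $\glg_2(\bbR)$-irreducible and agree with the spaces listed in \eqref{eq:Decomp1}.

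The central computation, which I expect to carry the real content of the proof, is the $\glg_2(\bbR)$-module structure of $\Lambda^2_\pm$. The split null metric \eqref{ac2} restricts to a nondegenerate $\glg_2(\bbR)$-invariant pairing between the two null eigendistributions of $K$, so $\bar{\cH}\cong\cH^*$; writing $V$ for the module carried by $\cH$ this gives $\bar{\cH}\cong V^*\cong V\otimes(\det)^{-1}$. Applying $T(U)$ directly to the bases \eqref{sip}--\eqref{sim}, a one-line calculation shows that $\sigma^3_+=\al^1\dz\bal^1+\al^2\dz\bal^2$ is invariant while $\sigma^1_+=\al^1\dz\al^2$ and $\sigma^2_+=\bal^1\dz\bal^2$ scale by $\det A$ and $(\det A)^{-1}$; hence $\Lambda^2_+$ splits into three one-dimensional weight spaces on which $\slg_2(\bbR)\subset\glg_2(\bbR)$ acts trivially and only the determinant character acts. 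By contrast the mixed wedge products span $\cH^*\otimes\bar{\cH}^*\cong\End(V)$, whose trace is exactly $\sigma^3_+$ and whose trace-free part $\{\sigma^1_-,\sigma^2_-,\sigma^3_-\}$ is $\Lambda^2_-$; thus $\Lambda^2_-$ is the three-dimensional adjoint (spin-$1$) module of $\slg_2(\bbR)$, with trivial determinant weight. This asymmetry between $\Lambda^2_+$ and $\Lambda^2_-$ is the whole engine of the proposition.

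Granting this, the two Weyl pieces decompose at once. Identifying $Weyl^+$ with the trace-free symmetric endomorphisms $\mathrm{Sym}^2_0(\Lambda^2_+)$ and using that $\Lambda^2_+$ is a sum of three characters, its trace-free symmetric square is a sum of five one-dimensional characters, of determinant weights $+2,+1,0,-1,-2$; these are precisely $Weyl^1_1,\dots,Weyl^1_5$, carrying $\Psi_0',\dots,\Psi_4'$. For the anti-self-dual part $Weyl^-=\mathrm{Sym}^2_0(\Lambda^2_-)$ one instead has $\mathrm{Sym}^2(\mathrm{spin}\,1)=\mathrm{spin}\,2\oplus\mathrm{spin}\,0$, so after removing the trace one is left with the irreducible five-dimensional $\mathrm{Sym}^4V$, i.e. the totally symmetric spinor $W_{ABCD}=W_{(ABCD)}$ of $Weyl^5_1$. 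For the trace-free Ricci I would split the symmetric $2$-tensor along $\mathrm{T}M=\cH\oplus\bar{\cH}$ into $\mathrm{Sym}^2\cH^*$, $\cH^*\otimes\bar{\cH}^*$ and $\mathrm{Sym}^2\bar{\cH}^*$; because the null pairing makes $g$ vanish on each of $\cH$ and $\bar{\cH}$ separately, the two outer blocks are $g$-trace-free and furnish the irreducible three-dimensional modules $Ric^3_1$ and $Ric^3_2$, while the middle block $\End(V)$ gives its scalar trace to $Scal^1$ and its adjoint part to $Ric^3_3$.

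It remains to confirm that these abstract summands coincide with the explicit component assignments of \eqref{eq:Decomp1} and that the ten listed modules are pairwise non-isomorphic, so that \eqref{decu} is the unique isotypic decomposition. The matching is most cleanly read off from the structure equations \eqref{ca21}--\eqref{ca22}, in which each curvature coefficient $\Psi_\mu,\Psi_\mu',\Rho_{ab}$ already appears against a definite basis form $\sigma^i_\pm$; transforming those equations by \eqref{troa1}--\eqref{troa3} reproduces exactly the weights found above. The only genuine bookkeeping subtlety is the determinant twist $V^*\cong V\otimes(\det)^{-1}$: it is what renders $Ric^3_1$ and $Ric^3_2$ inequivalent and, at the same time, forces the five $Weyl^1_j$ to be pairwise inequivalent one-dimensional modules, which is precisely why $Weyl^+$ shatters into five lines while $Weyl^-$ stays irreducible.
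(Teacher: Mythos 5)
Your proposal is correct and follows essentially the same route as the paper's proof: both decompose $\Lambda^2\mathrm{T}^*M$ via the $\glg_2(\bbR)$-invariant splitting $\mathrm{T}M=\cH\oplus\bar{\cH}$, observe that $\Lambda^2_+$ breaks into the three lines $\Span\{\sigma^1_+\},\Span\{\sigma^3_+\},\Span\{\sigma^2_+\}$ while $\Lambda^2_-$ remains irreducible, and then read off the refinement of the $\SOtt$ decomposition \eqref{riedec} block by block. Your weight bookkeeping (the $\det A$ characters, $\mathrm{Sym}^2_0$ computations, and pairwise inequivalence of the summands) is a slightly more explicit representation-theoretic packaging of the same argument, which the paper instead verifies by exhibiting the matrix \eqref{eq:RiemmanCurv1}.
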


\begin{proof}
This decompositions can be obtained similar to the decomposition of $Riemann^{20}$ into $Weyl^\pm$, $\overset{\circ}{Ricci}$ and $R$  via the $\SOtt$ invariant decomposition of  $\Lambda^2\mathrm{T}^*M$.
In this case
one decomposes $\Lambda^2\mathrm{T}^*M$ using the $\glg_2(\bbR)$ invariant decomposition of the tangent space 
$$\mathrm{T}M={\cH}\oplus\bar{\cH},$$
which is possible for any almost para-Hermitian manifold $(M,g,K)$. 
The associated decomposition of the cotangent bundle $\Lambda^1\mathrm{T}^*M$ is given by 
$$\Lambda^1\mathrm{T}^* M=\Lambda^{(1,0)}\oplus\Lambda^{(0,1)},$$
where
\[
  \begin{aligned}    \Lambda^{(1,0)}&=\{~\omega\in\Lambda^1\mathrm{T}^*M~|~\bar{X}\im\omega=0,~\forall\bar{X}\in\bar{\cH}~\}\\
    \Lambda^{(0,1)}&=\{~\bar{\omega}\in\Lambda^1\mathrm{T}^*M~|~X\im\bar{\omega}=0,~\forall X\in {\cH}~\}.
  \end{aligned}\]
 As a result, $\Lambda^2\mathrm{T}^*M$ is decomposed into 
$$\Lambda^2\mathrm{T}^*M=\Lambda^{(2,0)}\oplus\Lambda^{(1,1)}\oplus\Lambda^{(0,2)}.$$
It turns out that $\Lambda^{(2,0)}$ and $\Lambda^{(0,2)}$ are 1-dimensional, and $\Lambda^{(1,1)}$ has dimension 4. Choosing an adapted coframe $(\al^1,\al^2,\bal^1,\bal^2)$ we can write a basis for these spaces in terms of   the self-dual and anti-self-dual 2-forms,  $(\sigma^1_\pm,\sigma^2_\pm,\sigma^3_\pm),$  in \eqref{asd-sd-null}, as follows.
\[
  \begin{aligned}
    \Lambda^{(2,0)}\cap\Lambda^2_+&=\Span\{\sigma^1_+\}=\Lambda^{(2,0)},\\ \Lambda^{(0,2)}\cap\Lambda^2_+&=\Span\{\sigma^2_+\}=\Lambda^{(0,2)}\\ 
    \Lambda^{(1,1)}\cap\Lambda^2_+&=\Span\{\sigma^3_+\},\\
        \Lambda^{(1,1)}\cap\Lambda^2_-&=\Span\{\sigma^1_-,\sigma^2_-,\sigma^3_-\}=\Lambda^2_-,\\
    \end{aligned}
\] 
This gives a natural decomposition of $\Lambda^2_+$ into 1-dimensional $\glg_2(\bbR)$ invariant subspaces 
  $$\Lambda^2_+=\Span\{\sigma^1_+\}\oplus\Span\{\sigma^3_+\}\oplus\Span\{\sigma^2_+\} .$$ 
  Using this we can further decompose the map $Riemann$ from \eqref{riedec} as 
  \begin{equation}
    \label{eq:RiemmanCurv1}
   \begin{aligned}
 Riemann=&\tfrac{1}{12}R \id_{6\times 6}+
 \bma[c|c|c||ccc]
\Psi_2'&-2\Psi_3'&\Psi_4'&\Rho_{22}&2\Rho_{12}&\Rho_{11}\\
\cmidrule(lr){1-6}
\Psi_1'&-2\Psi_2'&\Psi_3'&\Rho_{23}&\Rho_{13}-\Rho_{24}&-\Rho_{14}\\
\cmidrule(lr){1-6}
\Psi_0'&-2\Psi_1'&\Psi_2'&\Rho_{33}&-2\Rho_{34}&\Rho_{44}\\
\cmidrule(lr){1-6}\morecmidrules\cmidrule(lr){1-6}
\Rho_{44}&2\Rho_{14}&\Rho_{11} &\Psi_2&2\Psi_1&\Psi_0\\
\Rho_{34}&\Rho_{13}-\Rho_{24}&-\Rho_{12} &-\Psi_3&-2\Psi_2&-\Psi_1\\
\Rho_{33}&-2\Rho_{23}&\Rho_{22} &\Psi_4&2\Psi_3&\Psi_2
 \ema.\end{aligned}
  \end{equation}
Comparing this with the decomposition \eqref{riedec} one obtains
\begin{itemize}
\item $Weyl^+$ gets decomposed into five  1-dimensional $\glg_2(\bbR)$ invariant subspaces denoted by $Weyl^1_1,\dots,Weyl^1_5$ which correspond to the components $\Psi'_0,\dots,\Psi'_4$ in \eqref{eq:RiemmanCurv1} respectively. 
\item $\overset{\circ}{Ricci}$ is decomposed into three invariant subspaces,  $Ric^3{}_1, Ric^3{}_2, Ric^3{}_3$ which correspond to the rows $(\Rho_{22},\Rho_{12},\Rho_{11})$, $(\Rho_{23}, \Rho_{13}-\Rho_{24},\Rho_{14})$, and $(\Rho_{33},$ $\Rho_{34},\Rho_{44})$ in \eqref{eq:RiemmanCurv1}  respectively. \item $Weyl^-$  remains indecomposable with its 5-dimensional representation $Weyl^5_1$ whose components are $(\Psi_0,\Psi_1,\Psi_2,\Psi_3,\Psi_4)$.
\item The Ricci scalar $R=12(\Rho_{13}+\Rho_{24})$ is proportional to the trace of $Riemann$ and gives the 1-dimensional invariant subspace $Scal^1$.  
\end{itemize} 
As a result one obtains  the decompositions \eqref{decu}.
\end{proof}

It is straightforward to find  the explicit action of the $\glg_2(\bbR)$ group on the indecomposable components of the curvature in \eqref{eq:Decomp1}.
\begin{proposition}\label{orbits}
 The curvature components $Ric^3_1$, $Ric^3_2$, $Ric^3_3$ and $Weyl^5_1$ are `tensorial' with respect to the action of $\glg_2(\bbR)$ i.e. for $U\in\glg_2(\bbR)\subset \SOtt$ given by \eqref{ga2}
 if an adapted coframe $(\theta^a)$ is transformed by
$$\theta^a\to\tilde{\theta}{}^a=T(U)^a_{~b}\theta^b,$$ 
then the transformation law for the curvature components $\Pi_{AB}$, $\bar{\Pi}_{AB}$ and $P^A_{~B}$ in \eqref{eq:Decomp1} is
\begin{subequations}\label{trrs}
  \begin{align}
&\Pi_{AB}\to\tilde{\Pi}{}_{AB}=\Pi_{CD}A^{tC}{}_A A^{tD}{}_B,\label{trrs1}\\
&\bar{\Pi}_{AB}\to\tilde{\bar{\Pi}}{}_{AB}=\bar{\Pi}_{CD}A^{-1C}{}_A A^{-1D}{}_B,\label{trrs2}\\
&P^A_{~B}\to\tilde{P}^A_{~B}=A^A_{~C}P^C_{~D}A^{-1D}{}_B,\label{trrs3}\\
&W_{ABCD}\to\tilde{W}{}_{ABCD}=W_{EFGH}A^{-1E}{}_AA^{-1F}{}_BA^{-1G}{}_CA^{-1H}{}_D.\label{trrs4}
\end{align}\end{subequations}
The curvature scalars $\Psi_0'$, $\Psi_1'$, $\Psi_2'$, $\Psi_3'$, $\Psi_4'$, $\Rho_{14}+\Rho_{23}$, are  \emph{weighted} scalars and transform to
\begin{equation}
  \label{eq:SD-Weyl-Trans}
  \begin{aligned} 
\Psi_0'&\to\tilde{\Psi}{}_0'=(\det A)^2~\Psi_0',\quad&\Psi_4'&\to\tilde{\Psi}{}_4'=(\det A)^{-2}~\Psi_4',\\
\Psi_1'&\to\tilde{\Psi}{}_1'=(\det A)~\Psi_1',\quad&\Psi_3'&\to\tilde{\Psi}{}_3'=(\det A)^{-1}~\Psi_3',\\
\Psi_2'&\to\tilde{\Psi}{}_2'=\Psi_2',\quad  &\Rho_{13}+\Rho_{24}&\to\tilde{\Rho}{}_{13}+\tilde{\Rho}{}_{24}=\Rho_{13}+\Rho_{24}.
\end{aligned}
\end{equation}
\end{proposition}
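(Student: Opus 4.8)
The plan is to observe that the transformation rule \eqref{troa3} merely expresses that $R^a{}_{bcd}$ is a genuine $(1,3)$-tensor, so that the associated operator $Riemann\colon\Lambda^2\mathrm{T}^*M\to\Lambda^2\mathrm{T}^*M$ transforms by the induced $\glg_2(\bbR)$-action on $\Lambda^2\mathrm{T}^*M$. Since Proposition \ref{riedech} already splits $Riemann$ into $\glg_2(\bbR)$-invariant blocks, each curvature component stays inside its own summand, and the whole statement reduces to (i) computing the action of $\glg_2(\bbR)$ on the forms $\sigma^i_\pm$ of \eqref{asd-sd-null}, and (ii) substituting $T(U)$ from \eqref{ga2} into \eqref{troa3} block by block.

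First I would record the action on the basis. From \eqref{ga2} one has $\tilde\al^i=A^i{}_j\al^j$ and $\tilde\bal^i=(A^{-1})^j{}_i\bal^j$ for $i,j=1,2$, so that, with $t:=\det A$,
\[
\tilde\sigma^1_+=t\,\sigma^1_+,\qquad \tilde\sigma^2_+=t^{-1}\sigma^2_+,\qquad \tilde\sigma^3_+=\sigma^3_+ ;
\]
thus $\Lambda^2_+$ is graded into weight lines $\Span\{\sigma^1_+\},\Span\{\sigma^3_+\},\Span\{\sigma^2_+\}$ of weights $t,1,t^{-1}$, the invariance of $\sigma^3_+$ reflecting that the para-K\"ahler form \eqref{ac3} is $\glg_2(\bbR)$-invariant. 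By contrast $\Lambda^2_-=\Span\{\sigma^1_-,\sigma^2_-,\sigma^3_-\}$ is preserved as a block and, under the identification of its basis with $\sla_2$, carries the adjoint action $X\mapsto AXA^{-1}$.

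The two halves of the proposition then follow by inspecting \eqref{eq:RiemmanCurv1}. The self-dual Weyl scalars are the entries of the symmetric form $HWeyl^+$ on the graded space $\Lambda^2_+$; an entry coupling the weight-$w$ and weight-$w'$ lines scales by $ww'$, which produces exactly the factors $t^{2},t,1,t^{-1},t^{-2}$ on $\Psi'_0,\dots,\Psi'_4$ in \eqref{eq:SD-Weyl-Trans}, while $\Rho_{13}+\Rho_{24}=\tfrac1{12}R$ is a full contraction and hence invariant. For the tensorial pieces, $Weyl^-$ is the $\Lambda^2_-\cong\sla_2$ block, so the adjoint action of $A$ is the $\mathrm{Sym}^4$-action \eqref{trrs4} on the binary quartic $W_{ABCD}$; the trace-free Ricci block intertwines the weight lines of $\Lambda^2_+$ with the adjoint $\Lambda^2_-$, and decomposing it line by line yields the symmetric $2$-tensors $\Pi_{AB}$ and $\bar\Pi_{AB}$ (dual to one another) together with the $\sla_2$-valued $P^A{}_B$, with laws \eqref{trrs1}--\eqref{trrs3}.

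The computations in step (i) are a few lines, so the genuine difficulty is bookkeeping rather than ideas: one must match each $\Psi'_\mu$ and each row and column of \eqref{eq:RiemmanCurv1} to the correct weight line, and keep every index position straight so that the three Ricci pieces land in the correct (covariant, contravariant, or adjoint) representation with the right power of $\det A$, and in particular so that the active/passive convention relating \eqref{troa3} to \eqref{ga1} is applied consistently throughout. As a sanity check I would first specialize to diagonal $A=\mathrm{diag}(a_{11},a_{22})$ and read the $\det A$-weights directly off the structure equations \eqref{ca21}--\eqref{ca22} (e.g.\ $\Gamma^1{}_4\mapsto t\,\Gamma^1{}_4$ forces $\tilde\Psi'_0=t^2\Psi'_0$), which pins down all the weights before restoring the full $\glg_2(\bbR)$-equivariance.
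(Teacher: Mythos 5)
Your proposal is correct and is essentially the computation the paper intends: the paper offers no written proof beyond asserting that the action is ``straightforward to find'' from the tensorial law \eqref{troa3} and the invariant decomposition of Proposition \ref{riedech}, and your weight analysis of the basis $\sigma^i_\pm$ (the three lines of weights $\det A$, $1$, $(\det A)^{-1}$ in $\Lambda^2_+$, and the adjoint action on $\Lambda^2_-$) combined with a block-by-block reading of \eqref{eq:RiemmanCurv1} reproduces \eqref{trrs} and \eqref{eq:SD-Weyl-Trans}. Your closing sanity check against the structure equations \eqref{ca21}--\eqref{ca22} (e.g.\ $\Gamma^1{}_4\mapsto(\det A)\,\Gamma^1{}_4$ forcing $\tilde{\Psi}{}_0'=(\det A)^2\Psi_0'$) is precisely the verification the paper encodes infinitesimally in the Bianchi identities \eqref{bi}, cf.\ Remark \ref{rmk:infinit-action-psi0}.
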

\begin{corollary}
 Every almost para-Hermitian structure $(M,g,K)$ in dimension 4  possesses   two scalar invariants which are the scalar curvature of the metric $g,$ given by $R=12(\Rho_{12}+\Rho_{34})$, and $\Psi_2',$ arising from  the self-dual Weyl tensor of the metric.  Moreover  the \emph{vanishing} of \emph{each} of the $\glg_2(\bbR)$ densities, $\Psi_0',\dots,\Psi_4'$, as well as  each of the $\glg_2(\bbR)$ tensors, $\Pi_{AB}$, $\bar{\Pi}_{AB}$ and $P^A_{~B},$ is an  invariant property of almost para-Hermitian structures.
\end{corollary}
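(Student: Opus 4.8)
The plan is to read everything off directly from the transformation laws established in Proposition~\ref{orbits}. The conceptual point that makes this legitimate is Proposition~\ref{opr1}: any two coframes adapted to $(M,g,K)$ at a point differ precisely by the action of some $U\in\glg_2(\bbR)$ through the representation $T$. Consequently a quantity built from the curvature components is a genuine invariant, i.e.\ a well-defined function on $M$, exactly when it is unchanged by every $T(U)$; and the vanishing of a curvature component is a coframe-independent property of the structure exactly when that component transforms to zero if and only if it was already zero.

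First I would treat the two claimed scalar invariants. The last line of \eqref{eq:SD-Weyl-Trans} records that $\Rho_{13}+\Rho_{24}$ is fixed by every $T(U)$, so the scalar curvature $R=12(\Rho_{13}+\Rho_{24})$ descends to a well-defined function on $M$; this is, of course, just the familiar fact that scalar curvature is a metric invariant. The same display shows $\Psi_2'\to\Psi_2'$ with trivial weight $(\det A)^0$, so $\Psi_2'$ is likewise a well-defined function on $M$, yielding the second scalar invariant.

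Next I would establish the invariance of the vanishing loci of the weighted densities $\Psi_0',\dots,\Psi_4'$. By \eqref{eq:SD-Weyl-Trans} these transform by multiplication by $(\det A)^{2},(\det A)^{1},(\det A)^{0},(\det A)^{-1},(\det A)^{-2}$ respectively. Since $A\in\glg_2(\bbR)$ is invertible, $\det A\neq 0$, so every such factor is nonzero; hence $\Psi_i'=0$ in one adapted coframe if and only if $\tilde\Psi_i'=0$ in every other, and the vanishing of each $\Psi_i'$ is an invariant property. For the tensors $\Pi_{AB}$, $\bar\Pi_{AB}$ and $P^A_{~B}$, the laws \eqref{trrs1}--\eqref{trrs3} are linear and are built from the invertible matrix $A$ together with its inverse and transpose; each is therefore a linear isomorphism, which sends a tensor to zero precisely when that tensor was already zero.

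I do not expect a genuine obstacle here: the corollary is essentially a formal consequence of Proposition~\ref{orbits}. The only point requiring care is the one flagged in the opening paragraph, namely that \emph{invariance} must be interpreted relative to the full $\glg_2(\bbR)$ action on adapted coframes guaranteed by Proposition~\ref{opr1}, so that checking invariance on a single $T(U)$-orbit is the same as checking it globally; once this is in place, invertibility of $A$ does all the remaining work.
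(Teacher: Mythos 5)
Your proof is correct and follows exactly the route the paper intends: the corollary is stated without a separate proof precisely because it is read off from the transformation laws \eqref{trrs} and \eqref{eq:SD-Weyl-Trans} of Proposition~\ref{orbits}, with Proposition~\ref{opr1} supplying the fact that adapted coframes form a single $T(U)$-orbit, and invertibility of $A$ (hence $\det A\neq 0$) giving the invariance of the vanishing loci. Note also that you silently (and correctly) use $R=12(\Rho_{13}+\Rho_{24})$, consistent with \eqref{eq:RiemmanCurv1} and \eqref{eq:SD-Weyl-Trans}, whereas the corollary's statement contains the typo $R=12(\Rho_{12}+\Rho_{34})$.
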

\subsubsection{Cartan-Penrose-Petrov classification of the Weyl tensor}
\label{sec:cart-penr-petr}
One of the basic \emph{pointwise invariants} of 4-dimensional metrics of split signature  is the so-called Petrov type of its self-dual and anti-self-dual Weyl curvatures. To define it, note that the transformation law \eqref{trrs4} shows the action of  the structure group $\glg_2(\bbR)$ on the anti-self-dual Weyl tensors, $Weyl^5_1,$ as an 5-dimensional representation. This representation is isomorphic with the standard representation of $\glg_2(\RR)$ on $\mathrm{Sym}^4(\RR^2)^*$ i.e. the degree 4 homogeneous polynomials in two variables. Using \eqref{eq:Decomp1}, the quartic polynomial is given by
\begin{equation*}
\begin{aligned}
W(\xi)&=W_{ABCD}\xi^A\xi^B\xi^C\xi^D\\
&=\Psi_4(\xi^1)^4+4\Psi_3(\xi^1)^3(\xi^0)+6\Psi_2(\xi^1)^2(\xi^0)^2+4\Psi_1(\xi^1)(\xi^0)^3+\Psi_0(\xi^0)^4,
\end{aligned}
\end{equation*}
where $\xi=(\xi^0,\xi^1).$
It turns out that  $\xi$ can serve as a  homogeneous coordinate for the circle bundle of anti-self-dual planes, $\cN_-.$ More precisely,  using the Weyl curvature \eqref{eq:Weylabcd},    define $C_{abcd}=g_{ad}C^d_{~bcd},$ which can be used to define the multilinear map 
\[\bW:=C_{abcd}(\theta^a\w\theta^b)\circ(\theta^c\w\theta^d):\mathrm{Sym}^2(\Lambda^2\rT M)\rightarrow \cC^\infty(M).\] Restricting to anti-self-dual null planes, $\cN_-\subset \Lambda^2\rT M$, as   in  \eqref{eq:sd-asd-parametrized}, one can define the   quartic polynomial 
\begin{equation}
  \label{eq:quartic_ASD_Weyl}
  \begin{aligned}
W(\lambda)&\textstyle{=\bW(\frac{\partial}{\partial\theta^1}+\lambda\frac{\partial}{\partial\theta^2},\frac{\partial}{\partial\theta^4}-\lambda\frac{\partial}{\partial\theta^3},\frac{\partial}{\partial\theta^1}+\lambda\frac{\partial}{\partial\theta^2},\frac{\partial}{\partial\theta^4}-\lambda\frac{\partial}{\partial\theta^3})}\\
&=\Psi_4\lambda^4+4\Psi_3\lambda^3+6\Psi_2\lambda^2+4\Psi_1\lambda+\Psi_0
\end{aligned}
\end{equation}
where  
\[\Psi_4=C_{2323},\quad \Psi_3=C_{1323},\quad \Psi_2=-C_{1423},\quad \Psi_1=C_{2414},\quad \Psi_0=C_{1414},\]
which establishes the relation  $\lambda=\frac{\xi^1}{\xi^0}$ between the parameters.  The quartic  \eqref{eq:quartic_ASD_Weyl} is a representation of the anti-self-dual Weyl curvature of the metric $g.$ 

Similarly, restricting to self-dual null planes, $\cN_+\subset \Lambda^2\rT M$, and using the affine parametrization \eqref{eq:sd-asd-parametrized}, one can define the   quartic polynomial 
\begin{equation} 
  \label{eq:quartic_SD_Weyl}
  \begin{aligned}
W'(\mu)&\textstyle{=\bW(\frac{\partial}{\partial\theta^4}-\mu\frac{\partial}{\partial\theta^1}, \frac{\partial}{\partial\theta^3}+\mu\frac{\partial}{\partial\theta^2}, \frac{\partial}{\partial\theta^4}-\mu\frac{\partial}{\partial\theta^1}, \frac{\partial}{\partial\theta^3}+\mu\frac{\partial}{\partial\theta^2})}\\
&=\Psi'_4\mu^4+4\Psi'_3\mu^3+6\Psi'_2\mu^2+4\Psi'_1\mu+\Psi'_0
\end{aligned}
\end{equation}
where 
\[\Psi'_4=C_{1212},\quad \Psi'_3=C_{1213},\quad \Psi'_2=C_{1234},\quad \Psi'_1=C_{1334},\quad \Psi'_0=C_{3434},\]
The quartic $W'(\mu)$ is a representation of the self-dual Weyl curvature of $g$ whose coefficients transform according to   \eqref{eq:SD-Weyl-Trans}.

The Petrov type at each point is the root type of the quartics $W(\lambda)$ and $W'(\mu)$ at that point, since   multiplicity pattern of the  roots  is invariant under the induced action of the structure group $\SOtt$.   Note that since the coefficients of the quartics are real and transform under the action of   $\glg_2(\RR)$, the root type is closed under complex conjugation. As a result,  there are  10  root types for each of the quartics $W(\lambda)$ and $W'(\mu).$ Following the tradition in General Relativity, where the metric has Lorentzian signature, root types are grouped into the six  Petrov types, denoted by   $G$, $II$, $III$, $N$, $D$ and $O$. In the case of metrics of split signature, due to different reality conditions, one obtains a finer classification of Petrov types given by   
\begin{enumerate}
\item type $G^r$: 4 real simple  roots.  
\item type $G^c$: 2 real simple  roots and 2  complex conjugate roots.
\item type $G^{cc}$: 2 pairs of complex conjugate  roots..  
\item type $II^r$: 1 double real root, 2 simple real roots.
\item type $II^c$: 1 double real root, 2  complex conjugate roots.
\item type $III$: 1 triple real root and 1 simple real root.
\item type $D^r$: 2 double real roots.
\item type $D^c$: 2 double complex conjugate roots.
\item type $N$: 1 quadruple real root.
\item type $O$:   when all the coefficients of the quartic are zero.
\end{enumerate}
The letter $G$ stands for \emph{general type} since, generically, the Petrov type of a quartic is  $G$.
 If the quartic is non-zero, then the 9  root types  are listed in Figure ~\ref{fig:Petrov_real},  which shows  the self-conjugate pattern of roots in each type. The horizontal line represents the real line and  conjugation of roots is given by reflection with respect to the horizontal line.
\begin{figure}[h]
\centering
\includegraphics[width=.8\textwidth]{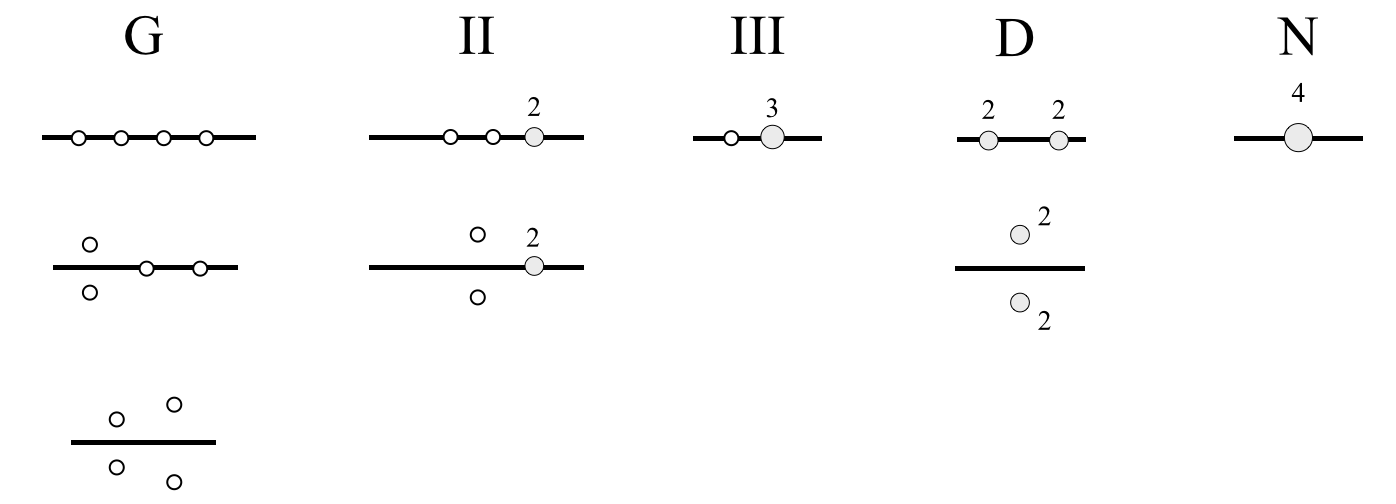}
\caption{Root types of a non-zero quartic with real coefficients.}
\label{fig:Petrov_real}
\end{figure} 

In what follows the root types $G^r, II^r, III, D^r, N, O$ will be referred to as the \emph{special real Petrov types}.  It is clear from  our discussion that since the parameters   $\lambda$ and $\mu$ in the quartics $W(\lambda)$ and $W'(\mu),$ parametrize $\cN_-$ and $\cN_+,$ respectively, a choice of real root for these quartics determine a choice of an anti-self-dual and self-dual null plane. This enables one to consider  anti-self-dual or self-dual null planes that correspond to a real root of the quartics $W(\lambda)$ or $W'(\mu).$  

\subsection{Para-K\"ahler (pK) metrics} 
\label{sec:pk-metrics}
In this section the para-K\"ahler condition is  used to reduce the structure equations of an almost para-Hermitian structure. After deriving their structure equations and curvature decomposition, we show that para-K\"ahler structures can be described in terms of a potential function, using which we give two examples of para-K\"ahler-Einstein metrics. These two examples turn out to be homogeneous as will be explained in \ref{sec:cart-reduct-symm}.

\subsubsection{PK structures in an adapted coframe.}
\label{sec:pk-structure-an}
A special feature of every almost para-Hermitian geometry $(M,g,K)$ is that in addition to  the (weighted) tensorial invariants arising from the curvature of the Levi-Civita connection, it  has invariants of lower order referred to as the \emph{intrinsic torsion}. These are defined in terms of the (Grey-Harvella type) decomposition of the covariant derivative of the 2-form $\rho$ \eqref{eq:2form1}  with respect to $\glg_2(\bbR)$. Two of these (relative) invariants are of particular interest in our setting. We will  express them in terms of the Levi-Civita connection 1-forms $\Gamma^a{}_b.$ 
Using \eqref{troa2}, one obtains that the transformation of the connection 1-forms $\Gamma^1_{~4}$ and $\Gamma^4_{~1}$ does not involve the  inhomogeneous terms   $\der (T(U))T(U)^{-1}$, which leads to the following proposition.
\begin{proposition}
Under the gauge transformation  \eqref{troa} of  adapted coframes  $(\theta^1,\theta^2,\theta^3,\theta^4)$ for an almost para-Hermitian structure, 
the  connection 1-forms $\Gamma^1_{~4}$ and $\Gamma^4_{~1}$  transform as
$$\begin{aligned}
&\Gamma^1_{~4}\to\tilde{\Gamma}^1_{~4}=(\det A)~\Gamma^1_{~4},\qquad &\Gamma^4_{~1}\to\tilde{\Gamma}^4_{~1}=(\det A)^{-1}~\Gamma^4_{~1}
\end{aligned}$$
where $A\in\glg_2(\bbR)$. As a result,  the vanishing of each of the connection 1-forms $\Gamma^1_{~4}$ and $\Gamma^4_{~1}$ is an 
invariant property of an almost para-Hermitian structure.
\end{proposition}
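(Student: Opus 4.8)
The plan is to read the two transformation laws directly off the general gauge formula \eqref{troa2}, exploiting the block-diagonal shape of $T(U)$ from \eqref{ga2} together with the special form of the connection matrix \eqref{LC}. Writing $T(U)=\mathrm{diag}(A,(A^T)^{-1})$ and hence $T(U)^{-1}=\mathrm{diag}(A^{-1},A^T)$, I decompose $\Gamma$ into $2\times 2$ blocks $\Gamma=\bma P&B\\ C&Q\ema$ according to the index splitting $\{1,2\}\sqcup\{3,4\}$. The homogeneous part of \eqref{troa2} then acts blockwise: the upper-right block transforms as $B\mapsto ABA^T$ and the lower-left block as $C\mapsto (A^T)^{-1}CA^{-1}$. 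Since $\Gamma^1_{~4}$ lives in $B$ and $\Gamma^4_{~1}$ lives in $C$, it suffices to track these two blocks.

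First I would confirm that the inhomogeneous term $\der T(U)\,T(U)^{-1}$ contributes nothing to the $(1,4)$ and $(4,1)$ slots. For the $(1,4)$ entry this term is $\der T(U)^1_{~c}\,T(U)^{-1c}{}_4$; the factor $\der T(U)^1_{~c}$ is supported on $c\in\{1,2\}$, while for those $c$ the entry $T(U)^{-1c}{}_4$ sits in the (vanishing) upper-right block of the block-diagonal matrix $T(U)^{-1}$, so the product is zero. The same argument with the blocks interchanged annihilates the inhomogeneous term for the $(4,1)$ slot. Thus both $\Gamma^1_{~4}$ and $\Gamma^4_{~1}$ transform purely tensorially, which is the observation already flagged in the text preceding the statement.

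The key step is then elementary linear algebra. Reading off \eqref{LC}, both off-diagonal blocks are multiples of the symplectic matrix $J=\bma 0&1\\ -1&0\ema$: explicitly $B=\Gamma^1_{~4}\,J$ and $C=-\Gamma^4_{~1}\,J$. Invoking the $2\times 2$ identity $M^T J M=(\det M)\,J$, valid for every $M\in\glg_2(\bbR)$, I obtain $ABA^T=\Gamma^1_{~4}\,(AJA^T)=(\det A)\,\Gamma^1_{~4}\,J$ and $(A^T)^{-1}CA^{-1}=-\Gamma^4_{~1}\,(A^{-1})^T J A^{-1}=-(\det A)^{-1}\Gamma^4_{~1}\,J$. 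Comparing the $(1,2)$ entry of the first block and the $(2,1)$ entry of the second with the unprimed expressions yields $\tilde{\Gamma}^1_{~4}=(\det A)\,\Gamma^1_{~4}$ and $\tilde{\Gamma}^4_{~1}=(\det A)^{-1}\Gamma^4_{~1}$, exactly as asserted.

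Finally, because $A\in\glg_2(\bbR)$ forces $\det A\neq 0$, each law multiplies the form by a nowhere-vanishing scalar factor, so the locus where $\Gamma^1_{~4}$ (respectively $\Gamma^4_{~1}$) vanishes is unaffected by any admissible change of adapted coframe; this is precisely the claimed invariance. I expect essentially no obstacle here: the only point demanding care is the vanishing of the inhomogeneous term, which rests entirely on the block-diagonal structure of $T(U)$, and once that is secured the conclusion is immediate from the determinant identity for $J$.
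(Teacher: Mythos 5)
Your proof is correct and follows essentially the same route as the paper, which simply observes via \eqref{troa2} that the inhomogeneous term $\der T(U)\,T(U)^{-1}$ drops out of the $(1,4)$ and $(4,1)$ slots (being block-diagonal) and then reads off the tensorial transformation from the block structure of \eqref{LC} and \eqref{ga2}. Your use of the identity $AJA^T=(\det A)\,J$ for the off-diagonal blocks $B=\Gamma^1_{~4}J$, $C=-\Gamma^4_{~1}J$ is just a clean way of organizing the direct computation the paper leaves implicit.
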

We have the following proposition.
\begin{proposition}\label{pkprop}
An almost para-Hermitian structure $(M,g,K)$  is para-K\"ahler if and only if $$\Gamma^1_{~4}=0\quad\mathrm{and}\quad \Gamma^4_{~1}=0,$$ in one (and therefore any) adapted coframe. As a result, the Levi-Civita connection form of $g$ is reduced to
$$\Gamma^a_{~b}=\bma \Gamma&0\\0&-\Gamma^T\ema,\quad\mathrm{with}\quad \Gamma\in\gla_2(\bbR)\otimes\Lambda^1\mathrm{T}^*M\equiv{\bf End}(\bbR^2)\otimes\Lambda^1\mathrm{T}^*M,$$
in any adapted coframe.
\end{proposition}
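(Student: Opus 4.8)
The plan is to prove the equivalence by decoding what the para-K\"ahler condition means through the integrability criteria established earlier, and then reading off the connection structure from the metricity form \eqref{LC}. Recall from \ref{sec:almost-para-herm} that the structure is para-K\"ahler precisely when both eigenspaces $\cH$ and $\bar\cH$ are integrable \emph{and} the para-K\"ahler 2-form $\rho$ is closed. In the adapted coframe we have $\rho=\al^1\dz\bal^1+\al^2\dz\bal^2$ by \eqref{ac3}, and $\cH=\Ker\{\bal^1,\bal^2\}$, $\bar\cH=\Ker\{\al^1,\al^2\}$. The key computational input is the torsion-free system \eqref{ca1}, which expresses $\der\al^i$ and $\der\bal^i$ in terms of the connection forms. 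The whole proof hinges on recognizing that the three conditions (integrability of $\cH$, integrability of $\bar\cH$, closedness of $\rho$) collapse onto the single pair of forms $\Gamma^1_{~4}$ and $\Gamma^4_{~1}$, which is exactly why the preceding proposition isolated these two forms as gauge-invariant relative invariants.

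First I would handle integrability. By the Frobenius criterion, $\bar\cH=\Ker\{\al^1,\al^2\}$ is integrable iff $\der\al^1$ and $\der\al^2$ lie in the ideal generated by $\{\al^1,\al^2\}$. Inspecting \eqref{ca1}, the forms $\der\al^1$ and $\der\al^2$ contain exactly one term outside this ideal, namely the $-\Gamma^1_{~4}\dz\bal^2$ and $+\Gamma^1_{~4}\dz\bal^1$ terms respectively. Since $\{\al^1,\al^2,\bal^1,\bal^2\}$ is a coframe, these terms lie in $\{\al^1,\al^2\}$ only if the one-form $\Gamma^1_{~4}$ itself has no $\bal^1,\bal^2$ components in a way that cancels — more precisely, wedging against the generators shows integrability of $\bar\cH$ forces $\Gamma^1_{~4}\equiv 0\pmod{\al^1,\al^2}$. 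Symmetrically, from the last two equations of \eqref{ca1}, integrability of $\cH=\Ker\{\bal^1,\bal^2\}$ forces $\Gamma^4_{~1}\equiv 0\pmod{\bal^1,\bal^2}$. Then I would compute $\der\rho$ directly: using \eqref{ca1}, a short calculation gives $\der\rho=\der\al^1\dz\bal^1-\al^1\dz\der\bal^1+\der\al^2\dz\bal^2-\al^2\dz\der\bal^2$, and after substitution the off-diagonal connection forms $\Gamma^1_{~4},\Gamma^4_{~1}$ are the only survivors, yielding an expression proportional to $\Gamma^4_{~1}\dz\al^1\dz\al^2+\Gamma^1_{~4}\dz\bal^1\dz\bal^2$ (up to the internal terms, which cancel by the diagonal block structure of \eqref{LC}). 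Setting $\der\rho=0$ together with the two integrability congruences then pins down $\Gamma^1_{~4}=0$ and $\Gamma^4_{~1}=0$ on the nose, and conversely these vanishings immediately give integrability of both distributions and $\der\rho=0$.

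The main obstacle I anticipate is the bookkeeping in the $\der\rho$ computation and, more subtly, confirming that the integrability congruences plus closedness genuinely force the \emph{full} vanishing of $\Gamma^1_{~4},\Gamma^4_{~1}$ rather than merely their vanishing modulo the respective ideals. The resolution is that integrability of $\bar\cH$ only controls the $\al$-components of $\Gamma^1_{~4}$ while integrability of $\cH$ controls the $\bal$-components of $\Gamma^4_{~1}$; the closedness of $\rho$ supplies the complementary constraints, and metricity \eqref{LC} ties $\Gamma^1_{~4}$ and $\Gamma^4_{~1}$ to the same off-diagonal slots so that the remaining components are eliminated. Once both forms vanish identically, I would substitute $\Gamma^1_{~4}=\Gamma^4_{~1}=0$ back into the metricity form \eqref{LC}: the matrix becomes block-diagonal with upper-left $2\times 2$ block $\Gamma=(\Gamma^A_{~B})_{A,B=1,2}\in\gla_2(\bbR)\otimes\Lambda^1\rT^*M$ and lower-right block $-\Gamma^T$, exactly the claimed reduced form. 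This establishes the equivalence and the corollary on the shape of the connection in any adapted coframe, the ``any'' following from the gauge-invariance recorded in the preceding proposition.
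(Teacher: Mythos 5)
Your proposal is correct and takes essentially the same route as the paper: Frobenius applied to the structure equations \eqref{ca1} gives the congruences $\Gamma^1_{~4}\equiv 0\ \mathrm{mod}\ \{\al^1,\al^2\}$ and $\Gamma^4_{~1}\equiv 0\ \mathrm{mod}\ \{\bal^1,\bal^2\}$, while $\der\rho=0$ reduces, exactly as you predict, to $\Gamma^1_{~4}\dz\bal^1\dz\bal^2+\Gamma^4_{~1}\dz\al^1\dz\al^2=0$, which is the paper's \eqref{pk2}, and the two constraints together kill the complementary components. Your explicit observation that integrability alone controls only part of each 1-form and closedness supplies the rest is precisely the combination the paper invokes (more tersely) in passing from \eqref{pk1} and \eqref{pk2} to full vanishing, after which reading off the block-diagonal form from \eqref{LC} is identical in both arguments.
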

\begin{proof}
By Frobenius theorem the integrability of $\cH $ and $\bar{\cH}$ in an adapted coframe is equivalent to  $\der\al^1\dz\al^1\dz\al^2=0=\der\al^2\dz\al^1\dz\al^2$ and $\der\bal^1\dz\bal^1\dz\bal^2=0=\der\bal^2\dz\bal^1\dz\bal^2$ respectively. Using the  first structure equations (\ref{ca1}), it follows that  the simultaneous  integrability of $\cH$ and $\bar{\cH}$  implies 
\begin{equation}
\label{pk1}
\begin{aligned}
&\der\Gamma^4_{~1}\dz\al^1\dz\bal^1\dz\bal^2=0,\quad\quad\der\Gamma^4_{~1}\dz\al^2\dz\bal^1\dz\bal^2=0,\quad\\
 &\der\Gamma^1_{~4}\dz\bal^1\dz\al^1\dz\al^2=0,\quad\quad \der\Gamma^1_{~4}\dz\bal^2\dz\al^1\dz\al^2=0.\end{aligned}
 \end{equation}
 On the other hand, the almost K\"ahler condition $\der\rho=0$, when written in an adapted coframe reads 
\[\der(\al^1\dz\al^2+\bal^1\dz\bal^2)=0.\] 
Using the first structure equations \eqref{ca1} it follows that this condition is equivalent to 
\begin{equation}\label{pk2}
 \Gamma^1_{~4}\dz\bal^1\dz\bal^2+\Gamma^4_{~1}\dz\al^1\dz\al^2=0.
\end{equation}
It follows from  \eqref{pk1} and \eqref{pk2} that $\Gamma^4{}_1=0$ and $\Gamma^1{}_4=0$, as claimed. 
\end{proof}
Proposition \ref{pkprop} leads to the following ``para'' analogue of the well-known fact that the holonomy of Riemannian 4-manifolds which are  K\"ahler is a subgroup of $\mathbf{U}_2$. 
\begin{corollary}\label{cor:pk-structures-an}
For any 4-dimensional para-K\"ahler  structure $(M,g,K)$ the pseudo-Riemannian holonomy of the metric $g$ is reduced  from $\SOtt$ to $\glg_2(\bbR)$ via the representation $T$ in \eqref{ga1}. This holonomy  reduction is equivalent to  the property that $K$ is parallel with respect to the Levi-Civita connection $\nabla$ of $g.$ 
\end{corollary}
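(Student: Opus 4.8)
The plan is to prove Corollary \ref{cor:pk-structures-an} by reading off the holonomy reduction directly from the structure of the Levi-Civita connection form established in Proposition \ref{pkprop}, and then to verify the equivalence with $\nabla K = 0$ by a direct computation in an adapted coframe. First I would recall that, by Proposition \ref{pkprop}, in any adapted coframe the connection 1-form takes the block-diagonal shape
\[
\Gamma^a{}_b=\begin{pmatrix}\Gamma&0\\0&-\Gamma^T\end{pmatrix},\qquad \Gamma\in\gla_2(\bbR)\otimes\Lambda^1\mathrm{T}^*M.
\]
This is precisely the Lie-algebra-valued form of the representation $T$ differentiated at the identity: comparing with \eqref{ga2}, the image $\exp(\gla_2(\bbR))$ under $dT$ consists of exactly such block-diagonal matrices with lower-right block the negative transpose of the upper-left. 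Hence the connection takes values in the Lie subalgebra $\frg\cong\gla_2(\bbR)\subset\sott$ at every point of $M$. Since the holonomy algebra is generated by parallel transport of the curvature, and the curvature 2-form $\tfrac12 R^a{}_{bcd}\theta^c\w\theta^d$ is the value of $\der\Gamma+\Gamma\w\Gamma$, which again lies in $\frg$ because $\frg$ is a subalgebra closed under bracket, the entire holonomy algebra sits inside $\frg$. This gives the reduction of the holonomy group from $\SOtt$ to $\glg_2(\bbR)$ via $T$, which is the first assertion.

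Next I would establish the equivalence with the parallelism of $K$. The endomorphism $K$ is given in the adapted coframe by \eqref{ac2}, so in the frame $(\tfrac{\partial}{\partial\theta^a})$ it is the constant diagonal matrix $K=\mathrm{diag}(1,1,-1,-1)$. The covariant derivative of $K$ as a $(1,1)$-tensor is
\[
(\nabla K)^a{}_b=\der K^a{}_b+\Gamma^a{}_c K^c{}_b-K^a{}_c\Gamma^c{}_b.
\]
Since $K^a{}_b$ is constant its differential vanishes, so $\nabla K=0$ reduces to the purely algebraic commutation condition $[\,\Gamma,K\,]=0$ at every point. Writing $\Gamma^a{}_b$ in the general block form of \eqref{LC} and imposing commutativity with $\mathrm{diag}(1,1,-1,-1)$, I would observe that the off-diagonal blocks of the commutator are proportional to $\Gamma^1{}_4,\Gamma^2{}_3,\dots$; in the metric-adapted form \eqref{LC} these off-diagonal entries are governed precisely by $\Gamma^1{}_4$ and $\Gamma^4{}_1$. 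Thus $[\Gamma,K]=0$ holds if and only if $\Gamma^1{}_4=\Gamma^4{}_1=0$, which by Proposition \ref{pkprop} is exactly the para-K\"ahler condition. This closes the equivalence in both directions.

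I would present these two computations as the substance of the proof, noting that the holonomy statement follows formally from the Ambrose-Singer theorem once the connection is shown to be $\frg$-valued, while the $\nabla K=0$ statement is the standard fact that a parallel tensor corresponds to a holonomy reduction to its stabilizer (and here the stabilizer of the pair $(g,K)$ is $\mathbf{H}\cong\glg_2(\bbR)$, as recorded in \eqref{ga2}). The main obstacle, such as it is, is bookkeeping rather than conceptual: one must verify carefully that the block-diagonal constraint in Proposition \ref{pkprop} matches the differentiated form of $T$ and that the commutator $[\Gamma,K]$ isolates exactly the two forms $\Gamma^1{}_4,\Gamma^4{}_1$ among the entries of \eqref{LC}. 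Since these entries and their transformation behavior were already tabulated in the preceding propositions, the verification is short, and I expect no genuine difficulty beyond keeping the index and sign conventions of \eqref{LC} and \eqref{ac2} consistent throughout.
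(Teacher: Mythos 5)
Your proposal is correct and follows essentially the same route as the paper, which states the corollary as an immediate consequence of Proposition \ref{pkprop} (the block-diagonal, $\gla_2(\bbR)$-valued form of the Levi-Civita connection in an adapted coframe) together with the holonomy principle recorded in Remark \ref{rmk:pk-G-str}; your commutator computation $[\Gamma,K]=0\Leftrightarrow\Gamma^1{}_4=\Gamma^4{}_1=0$ and the Ambrose--Singer argument simply make explicit what the paper leaves implicit. The only cosmetic slip is the phrase ``the image $\exp(\gla_2(\bbR))$ under $dT$,'' where you mean the image of $\gla_2(\bbR)$ under the differentiated representation $dT$, namely the matrices $\mathrm{diag}(X,-X^T)$; this does not affect the argument.
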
 
\begin{remark}\label{rmk:pk-G-str}
The corollary above is a consequence of the so-called \emph{holonomy principle} in pseudo-Riemannian geometry which  establishes a one to one correspondence between the space of parallel sections of tensor bundles   and the invariant vectors in $\rT_xM$ under the action of the holonomy group $\mathrm{Hol}_x$ at each point $x\in M$.  We refer to \cite{BI-holonomy} for further discussion of the holonomy group of pseudo-Riemannian metrics of split signature.  
 
Let us also  point out that  in the spirit of  Remark \ref{rmk:2approaches},  Proposition \ref{pkprop} implies that   
  the bundle of adapted null frames for para-K\"ahler structures is a  principal
$\mathbf{GL}_2(\mathbb{R})$-bundle $\cF^8\to M$ obtained from reducing  the $\SOtt$-bundle $\cF\to M,$  with the property that the reduced first order structure equations given by \eqref{ca1}, \eqref{ca21}-\eqref{ca22} have no intrinsic torsion. 
\end{remark}
\subsubsection{Curvature of pK geometry and pK-Einstein (pKE) condition}
\label{sec:curv-pk-geom}
In this section we discuss  the curvature of para-K\"ahler structures.
\begin{proposition}
The curvature $Riemann^{20}$ of every 4-dimensional para-K\"ahler structure $(M,g,K)$ can be decomposed as
$$
Riemann^{20}=Ric^3_2\oplus\Big( Scal^1 \cong Weyl^1_3\Big)\oplus Weyl^5_1,
$$
which, compared to \eqref{eq:Decomp1}, means $Ric^3_1=Ric^3_3=Weyl^1_1=Weyl^1_2=Weyl^1_4=Weyl^1_5=0$. 

More explicitly, the curvature operator $Riemann$  in \eqref{eq:RiemmanCurv1}, expressed in  term of the basis of 2-forms $(\sigma^i_\pm)$  in  \eqref{asd-sd-null}, is given by
 $$\begin{aligned}
 Riemann=&-\Psi_2' \id_{6\times 6}+
 \bma[c|c|c||ccc]
\Psi_2'&0&0&&0&\\
\cmidrule(lr){1-6}
0&-2\Psi_2'&0&\Rho_{23}&\Rho_{13}-\Rho_{24}&-\Rho_{14}\\
\cmidrule(lr){1-6}
0&0&\Psi_2'&&0&\\
\cmidrule(lr){1-6}\morecmidrules\cmidrule(lr){1-6}
&2\Rho_{14}& &\Psi_2&2\Psi_1&\Psi_0\\
0&\Rho_{13}-\Rho_{24}&0&-\Psi_3&-2\Psi_2&-\Psi_1\\
&-2\Rho_{23}& &\Psi_4&2\Psi_3&\Psi_2
 \ema.\end{aligned}
$$
\end{proposition}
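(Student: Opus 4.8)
The plan is to feed the para-K\"ahler reduction of Proposition \ref{pkprop}, namely $\Gamma^1_{~4}=\Gamma^4_{~1}=0$ in an adapted coframe, into the second structure equations \eqref{ca21}. Conceptually this is just the content of Corollary \ref{cor:pk-structures-an}: the Levi-Civita connection, and hence its curvature $2$-form, is $\glg_2(\bbR)$-valued, i.e.\ block diagonal of the form $\mathrm{diag}(\Gamma,-\Gamma^T)$, so the two off-diagonal curvature blocks must vanish. Concretely, since $\Gamma^1_{~4}$ and $\Gamma^4_{~1}$ vanish identically we also have $\der\Gamma^1_{~4}=\der\Gamma^4_{~1}=0$, so the left-hand sides of the second and third lines of \eqref{ca21} are zero. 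The crucial observation is that these relations, which ordinarily \emph{define} the curvature $2$-forms, now degenerate into purely algebraic constraints asserting that their right-hand sides vanish.

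Because $(\sigma^1_\pm,\sigma^2_\pm,\sigma^3_\pm)$ from \eqref{asd-sd-null} form a basis of $\Lambda^2\mathrm{T}^*M$, I would then set each coefficient on those right-hand sides to zero. From $\der\Gamma^4_{~1}=0$ this gives $\Psi_3'=\Psi_4'=0$, the three relations $\Rho_{11}=\Rho_{12}=\Rho_{22}=0$, and $\Psi_2'+\Rho_{13}+\Rho_{24}=0$; from $\der\Gamma^1_{~4}=0$ it gives $\Psi_0'=\Psi_1'=0$, the three relations $\Rho_{33}=\Rho_{34}=\Rho_{44}=0$, and again $\Psi_2'+\Rho_{13}+\Rho_{24}=0$. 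In terms of \eqref{eq:Decomp1} this is precisely $Weyl^1_1=Weyl^1_2=Weyl^1_4=Weyl^1_5=0$ together with the vanishing of the two symmetric Schouten blocks $Ric^3_1=Ric^3_2=0$, while the traceless block $Ric^3_3$ (carrying $\Rho_{14},\Rho_{23},\Rho_{13}-\Rho_{24}$) and the scalar $Scal^1$ survive, and $Weyl^5_1$ is untouched. The leftover identity $\Psi_2'=-(\Rho_{13}+\Rho_{24})=-\tfrac1{12}R$ makes the surviving self-dual scalar $Weyl^1_3$ proportional to $Scal^1$; since both are weight-zero invariants by \eqref{eq:SD-Weyl-Trans}, this is the claimed isomorphism $Scal^1\cong Weyl^1_3$.

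Finally I would substitute these vanishings, together with $\tfrac1{12}R=-\Psi_2'$, into the general curvature operator \eqref{eq:RiemmanCurv1}. The scalar term $\tfrac1{12}R\,\id_{6\times6}$ becomes $-\Psi_2'\,\id_{6\times6}$, the self-dual block collapses to $\mathrm{diag}(\Psi_2',-2\Psi_2',\Psi_2')$, the off-diagonal Ricci rows and columns reduce to the single surviving triple $(\Rho_{14},\Rho_{13}-\Rho_{24},\Rho_{23})$, and the anti-self-dual block is unchanged, which is exactly the displayed reduced matrix. I do not anticipate a genuine difficulty: the only subtle step is recognizing that the identical vanishing of $\Gamma^1_{~4}$ and $\Gamma^4_{~1}$ converts the relevant lines of \eqref{ca21} from differential definitions of curvature into algebraic identities; everything after that is a coefficient-by-coefficient comparison.
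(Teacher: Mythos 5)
Your proposal is correct and is essentially the paper's own proof: the paper disposes of the proposition in one line by substituting $\Gamma^4_{~1}=\Gamma^1_{~4}=0$ into the last two of the second structure equations \eqref{ca21}, which is exactly your observation that these lines degenerate into algebraic identities forcing $\Psi_0'=\Psi_1'=\Psi_3'=\Psi_4'=0$, $\Rho_{11}=\Rho_{12}=\Rho_{22}=\Rho_{33}=\Rho_{34}=\Rho_{44}=0$ and $\Psi_2'=-(\Rho_{13}+\Rho_{24})=-\tfrac{1}{12}R$, after which the displayed matrix follows from \eqref{eq:RiemmanCurv1} by inspection. One remark: your labeling of the surviving traceless Ricci block as $Ric^3_3$ agrees with the definitions in \eqref{eq:Decomp1}, while the proposition statement calls it $Ric^3_2$ --- a naming discrepancy already present in the paper between \eqref{eq:Decomp1} and the proof of Proposition \ref{riedech} --- but the curvature content (survival of $\Rho_{14},\Rho_{23},\Rho_{13}-\Rho_{24}$ only) is identical, so this does not affect the validity of your argument.
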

\begin{proof}
The proof follows from  a simple substitution of $\Gamma^4{}_1=0$ and $\Gamma^1{}_4=0$ into the last two of the second structure equations (\ref{ca21}). 
\end{proof}
\begin{remark}
Note that the curvature conditions 
$$\Rho_{11}=\Rho_{12}=\Rho_{22}=\Rho_{33}=\Rho_{34}=\Rho_{44}=\Psi_0'=\Psi_1'=\Psi_3'=\Psi_4'=\Psi_2'+\tfrac{1}{12}R=0,$$
implied by the para-K\"ahler condition $\Gamma^4{}_1=\Gamma^1{}_4=0$, when inserted to the second structure equations (\ref{ca21})-(\ref{ca22}), give that  
the entire curvature $\tfrac12R^a{}_{bcd}\theta^c\dz\theta^d$ of the para-K\"ahler structure is a $\gla_2(\bbR)$-valued 2-form  
i.e. $\tfrac12R^a{}_{bcd}\theta^c\dz\theta^d\in \Lambda^{(1,1)}.$
\end{remark}

Recall that a 4-dimensional pseudo-Riemannian manifold $(M,g)$ where  $g$ has split signature is called \emph{Einstein} if and only if  its traceless Ricci curvature vanishes, i.e. $\overset{\circ}{Ricci}=0$ in  (\ref{riedec}). Therefore, one obtains the following.
\begin{corollary}
The curvature of a 4-dimensional para-K\"ahler-Einstein structure decomposes to
$$
Riemann^{20}=\Big( Scal^1 \cong Weyl^1_3\Big)\oplus Weyl^5_1.
$$
When written in an adapted coframe it reads

\begin{equation}
  \label{eq:pKE-curv}
  \begin{aligned}
 Riemann=
 \bma[c||c]
 \begin{matrix}[c|c|c]
 0&0&0\\\cmidrule(lr){1-3}0&-3\Psi_2'&0\\\cmidrule(lr){1-3}0&0&0\end{matrix}&0\\
 \cmidrule(lr){1-2}\morecmidrules\cmidrule(lr){1-2}
 0& \begin{matrix}
 \Psi_2-\Psi_2'&2\Psi_1&\Psi_0\\-\Psi_3&-2\Psi_2-\Psi_2'&-\Psi_1\\\Psi_4&2\Psi_3&\Psi_2-\Psi_2'\end{matrix}
 \ema.\end{aligned}
\end{equation}
\end{corollary}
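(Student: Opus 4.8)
The plan is to obtain the para-K\"ahler-Einstein curvature as a specialization of the para-K\"ahler curvature operator displayed in the preceding proposition, by imposing the Einstein condition $\overset{\circ}{Ricci}=0$ directly on that operator.

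First I would record that, by the preceding proposition, a para-K\"ahler structure already satisfies
$$
Ric^3_1=Ric^3_3=Weyl^1_1=Weyl^1_2=Weyl^1_4=Weyl^1_5=0,
$$
equivalently $\Rho_{11}=\Rho_{12}=\Rho_{22}=\Rho_{33}=\Rho_{34}=\Rho_{44}=0$, together with $\Psi_0'=\Psi_1'=\Psi_3'=\Psi_4'=0$ and $\Psi_2'+\tfrac1{12}R=0$. Consequently the entire trace-free Ricci tensor of a para-K\"ahler structure is carried by the single block $Ric^3_2$, whose independent components are precisely the three scalars $\Rho_{23}$, $\Rho_{14}$ and $\Rho_{13}-\Rho_{24}$ that survive in the off-diagonal blocks of the para-K\"ahler curvature matrix.

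The key step is then to observe that, \emph{restricted to the para-K\"ahler class}, the Einstein condition $\overset{\circ}{Ricci}=0$ is equivalent to the vanishing of this last block, i.e. to $\Rho_{23}=\Rho_{14}=\Rho_{13}-\Rho_{24}=0$. After this reduction the remaining work is a direct substitution: setting these three scalars to zero in the matrix of the preceding proposition annihilates both off-diagonal blocks together with the off-diagonal entries of the self-dual diagonal block, while the term $-\Psi_2'\,\id_{6\times 6}$ collapses the self-dual diagonal entries to $(0,-3\Psi_2',0)$. This produces exactly the block-diagonal form \eqref{eq:pKE-curv}, and reading off the surviving invariant pieces gives the stated decomposition $Riemann^{20}=\big(Scal^1\cong Weyl^1_3\big)\oplus Weyl^5_1$.

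Since everything is a specialization of an already-computed decomposition, I do not expect any genuine analytic difficulty; the only point deserving care is the bookkeeping identification that in the para-K\"ahler setting the full trace-free Ricci tensor reduces to the single $\glg_2(\bbR)$-block $Ric^3_2$, so that the Einstein condition amounts to just three scalar equations, and the constraint $Scal^1\cong Weyl^1_3$ (that is, $\Psi_2'=-\tfrac1{12}R$) is retained in the final block-diagonal form rather than being eliminated.
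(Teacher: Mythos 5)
Your proposal is correct and is essentially the paper's own (implicit) argument: the Corollary follows exactly as you say, by observing that for a para-K\"ahler structure the only surviving trace-free Ricci components are $\Rho_{23}$, $\Rho_{14}$ and $\Rho_{13}-\Rho_{24}$, so the Einstein condition annihilates them, and substituting this into the para-K\"ahler curvature matrix collapses the self-dual diagonal to $(0,-3\Psi_2',0)$ and yields \eqref{eq:pKE-curv} together with $Riemann^{20}=\big(Scal^1\cong Weyl^1_3\big)\oplus Weyl^5_1$. One bookkeeping caveat, which you inherited from a typo in the Proposition's own statement: in the labels of \eqref{eq:Decomp1} the block carrying $\Rho_{23},\Rho_{14},\Rho_{13}-\Rho_{24}$ is $Ric^3_3$ (the traceless $P^A_{~B}$ block), not $Ric^3_2$, whose components are $\Rho_{33},\Rho_{34},\Rho_{44}$ and which vanishes for para-K\"ahler structures along with $Ric^3_1$ --- your component-level equations are the correct ones, only the block name should be swapped.
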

\begin{remark}\label{rmk:SD-Weyl-Curv}
It follows from \ref{sec:curv-pk-geom} that for para-K\"ahler-Einstein manifolds the two constant curvature components are related by $R=-12\Psi_2'.$
 From now on, we restrict ourselves to para-K\"ahler-Einstein 4-manifolds with  non-vanishing $Weyl^+$ i.e.  \emph{we always assume} 
$$\Psi_2'=\mathrm{const}\neq 0.$$
 Moreover, following  the discussion in \ref{sec:cart-penr-petr} on the Petrov type of the anti-self-dual Weyl curvature, $Weyl^-=Weyl^5_1,$ one obtains that the Petrov type of the quartic representation of $Weyl^+,$ as the self-dual Weyl curvature of the metric $g,$ is $D$ if $\Psi'_2\neq 0,$ and $O$ if $\Psi_2'=0.$
\end{remark}

\subsubsection{Para-K\"ahler structure in a coordinate system}
\label{sec:pk-struct-coord}
One of the  features of K\"ahler metrics is that they  can be locally expressed in terms of a function, called the \emph{K\"ahler potential}. An analogous feature for the para-K\"ahler structures in  4 dimensions is described in the following two propositions.
\begin{proposition}\label{pkc}
  Let $\cU$ be an open set of $\bbR^4$, and let $(a,b,x,y)$ be Cartesian coordinates in $\cU$. Consider a real-valued sufficiently differentiable function $V=V(a,b,x,y)$ on $\cU$ such that
  $$\det \bma V_{ax}& V_{ay}\\V_{bx}&V_{by}\ema\neq 0\quad\mathrm{in}\quad {\cU}.$$
  Define
  $$\begin{aligned}
    g=&~2\der a~(V_{ax}\der x +V_{ay}\der y)+2\der b~(V_{bx}\der x+V_{by}\der y),\\
    K=&~\partial_a\otimes\der a+\partial_b\otimes\der b-\partial_x\otimes\der x-\partial_y\otimes\der y,\\
    \rho=&~\der a\dz(V_{ax}\der x +V_{ay}\der y)+\der b\dz(V_{bx}\der x+V_{by}\der y).\end{aligned}$$
  Then the pair $(g,K)$ defines a para-K\"ahler structure on $\cU$ with $\rho(\cdot,\cdot)=g(K(\cdot),\cdot)$.

  The para-K\"ahler structure $({\cU},g,K)$ is Einstein  i.e. $Ric(g)=\Lambda g,$
  if and only if the potential function $V$ satisfies
  \begin{equation}
    \label{ec}
  \det \bma V_{ax}& V_{ay}\\V_{bx}&V_{by}\ema=c_1 c_2~ \mathrm{e}^{-\Lambda V}
  \end{equation}
  for a real number $\Lambda$ and real-valued functions $c_1=c_1(a,b)$, $c_2=c_2(x,y).$
\end{proposition}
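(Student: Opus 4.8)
The plan is to realise $(g,K)$ in a distinguished adapted coframe and then feed it into the curvature formalism of \ref{sec:curv-pk-geom}. First I would set
\[\al^1=\der a,\quad \al^2=\der b,\quad \bal^1=V_{ax}\,\der x+V_{ay}\,\der y,\quad \bal^2=V_{bx}\,\der x+V_{by}\,\der y,\]
so that the nondegeneracy hypothesis $J:=\det\bma V_{ax}&V_{ay}\\V_{bx}&V_{by}\ema\neq 0$ is exactly the condition that $(\al^1,\al^2,\bal^1,\bal^2)$ be a coframe. A direct comparison shows this coframe puts $(g,K,\rho)$ into the standard adapted form \eqref{ac2}--\eqref{ac3}, so the pointwise almost-para-Hermitian axioms and the identity $\rho(\cdot,\cdot)=g(K\cdot,\cdot)$ hold automatically by the linear algebra of \ref{sec:almost-para-herm-1}. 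Integrability of $\cH=\Span\{\partial_a,\partial_b\}$ and $\bar\cH=\Span\{\partial_x,\partial_y\}$ is immediate, since these are spanned by commuting coordinate fields (equivalently $\der\al^1=\der\al^2=0$). Finally, using $\bal^1=\der V_a-V_{aa}\al^1-V_{ab}\al^2$ and $\bal^2=\der V_b-V_{ab}\al^1-V_{bb}\al^2$ one finds $\rho=-\der(V_a\,\der a+V_b\,\der b)$, which is exact; hence $\der\rho=0$ and the structure is para-K\"ahler.

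For the Einstein statement I would work with the reduced connection $\Gamma\in\gla_2(\bbR)\otimes\Lambda^1\rT^*M$ of Proposition \ref{pkprop} and first pin down its trace. Introduce the split exterior derivative $\der=\der^++\der^-$, with $\der^+f=f_a\,\der a+f_b\,\der b$ and $\der^-f=f_x\,\der x+f_y\,\der y$, and note $\rho=\der^+\der^-V$. Since $\al^1,\al^2$ are closed, \eqref{ca1} forces $\Gamma^1_{~1},\Gamma^2_{~2}\in\Span\{\al^1,\al^2\}$, so $\mathrm{tr}\,\Gamma\in\Span\{\al^1,\al^2\}$. On the other hand $\bal^1\dz\bal^2=J\,\der x\dz\der y$ gives $\der(\bal^1\dz\bal^2)=\der\log|J|\dz(\bal^1\dz\bal^2)$, while \eqref{ca1} gives $\der(\bal^1\dz\bal^2)=\mathrm{tr}\,\Gamma\dz(\bal^1\dz\bal^2)$; hence $\mathrm{tr}\,\Gamma-\der\log|J|\in\Span\{\bal^1,\bal^2\}$. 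Comparing the two membership statements yields the clean identity
\[\mathrm{tr}\,\Gamma=\Gamma^1_{~1}+\Gamma^2_{~2}=\der^+\log|J|.\]

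Next I would invoke the para-K\"ahler analogue of the K\"ahler Ricci-form identity. Because $\mathrm{tr}(\Gamma\dz\Gamma)=0$ for a $2\times2$ connection, the trace of the $\gla_2$-curvature is $\mathrm{tr}\,\Om=\der\,\mathrm{tr}\,\Gamma=\der\der^+\log|J|=-\der^+\der^-\log|J|$, and this equals the Ricci form $\rho_{\mathrm{Ric}}(\cdot,\cdot)=\Ric(K\cdot,\cdot)$ (the curvature of the canonical bundle $\det\cH^*$). The metric is Einstein, $\Ric=\Lambda g$, precisely when $\rho_{\mathrm{Ric}}=\Lambda\rho$; since $\rho$ is symplectic and $\rho_{\mathrm{Ric}}$ is closed, this relation forces $\Lambda$ to be constant. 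Combining with $\rho=\der^+\der^-V$, the Einstein equation becomes $\der^+\der^-\big(\log|J|+\Lambda V\big)=0$. Finally $\der^+\der^-F=0$ holds iff the mixed derivatives $F_{ax},F_{ay},F_{bx},F_{by}$ all vanish, i.e. iff $F$ splits as $F=f_1(a,b)+f_2(x,y)$; applied to $F=\log|J|+\Lambda V$ this gives $J=c_1(a,b)\,c_2(x,y)\,\mathrm{e}^{-\Lambda V}$, which is \eqref{ec}.

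The routine parts are the coframe identifications and the determinant bookkeeping. The step requiring genuine care -- and the one I expect to be the main obstacle -- is the precise sign and normalisation of the Ricci-form identity $\rho_{\mathrm{Ric}}=\mathrm{tr}\,\Om=-\der^+\der^-\log|J|$: it is the \emph{unit} coefficient here that produces exactly $\mathrm{e}^{-\Lambda V}$ (and not $\mathrm{e}^{-c\Lambda V}$) in \eqref{ec}, and it rests on identifying the block trace $\mathrm{tr}\,\Om$ with the honest Ricci contraction $R_{ab}=R^c_{~acb}$. I would secure this either by reading it off the explicit curvature decomposition of \ref{sec:curv-pk-geom} (the surviving trace-free Ricci block together with $R=-12\Psi_2'$ fixing the constant), or, most safely, by solving \eqref{ca1} for all the $\Gamma^a_{~b}$ explicitly and substituting into \eqref{ca21}--\eqref{ca22} to compute the trace-free Ricci and $R$ directly in terms of $V$.
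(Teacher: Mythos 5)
Your proposal is correct, and while its first half coincides with the paper's argument, the Einstein half takes a genuinely different route. The para-K\"ahler verification is essentially the paper's: the same adapted coframe $\al^1=\der a$, $\al^2=\der b$, $\bal^1=V_{ax}\der x+V_{ay}\der y$, $\bal^2=V_{bx}\der x+V_{by}\der y$, though your observation that $\rho=-\der(V_a\,\der a+V_b\,\der b)$ is \emph{exact} is slicker than the paper's route, which computes the connection forms explicitly and checks $\der\rho=0$ and $\Gamma^1{}_4=\Gamma^4{}_1=0$ against Proposition \ref{pkprop}. For the Einstein condition the paper simply computes: it displays all $\gla_2(\bbR)$ connection forms in the coordinate frame (equation \eqref{eq:Conn1Form-LocCoord} of the Appendix), evaluates the Ricci tensor directly, and finds $R_{AB}=R_{\dot A\dot B}=0$ together with $R_{A\dot A}=-\partial_{x^A}\partial_{x^{\dot A}}\log\big(V_{ax}V_{by}-V_{ay}V_{bx}\big)$, whence the Einstein equations integrate at once to \eqref{ec}. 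You instead argue invariantly: Cartan's lemma applied to $\der\al^1=\der\al^2=0$ forces $\Gamma^A{}_B\in\Span\{\al^1,\al^2\}$, the volume form $\bal^1\dz\bal^2=J\,\der x\dz\der y$ pins down $\mathrm{tr}\,\Gamma=\der^+\log|J|$ (which indeed agrees with the trace of the paper's explicit $\Gamma^A{}_B$, confirming your normalisation), and the Ricci-form identity $\rho_{\mathrm{Ric}}=\mathrm{tr}\,\Om=-\der^+\der^-\log|J|$ converts $\Ric=\Lambda g$ into $\der^+\der^-\big(\log|J|+\Lambda V\big)=0$. This buys conceptual clarity --- it is the para-analogue of ``Ricci form equals the curvature of the anticanonical bundle,'' explaining \emph{why} only the log-determinant of the mixed Hessian of $V$ enters --- at the cost of the one lemma the paper never states: that $\Ric(K\cdot,\cdot)$ is an honest 2-form equal to $\mathrm{tr}\,\Om$ with unit coefficient. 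Note that this identity silently encodes the automatic vanishing of the pure components $R_{AB},R_{\dot A\dot B}$ (a consequence of the curvature being $\gla_2(\bbR)$-valued of type $(1,1)$, cf.\ the remark in \ref{sec:curv-pk-geom}), which is exactly what makes ``Einstein $\Leftrightarrow$ $\rho_{\mathrm{Ric}}=\Lambda\rho$'' an equivalence; the paper covers this by verifying $R_{AB}=R_{\dot A\dot B}=0$ computationally. You correctly flag this as the delicate point, and your proposed fallback --- solving \eqref{ca1} and substituting into \eqref{ca21}--\eqref{ca22} --- is precisely the paper's computation, so the gap is one you already know how to close. Your symplectic argument that $\der\Lambda\dz\rho=0$ forces $\Lambda$ to be constant is a small bonus absent from the paper.
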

\begin{proof} In the adapted coframe
\[\begin{aligned}
    \al^1&=\der a,\qquad    &\al^2&=\der b\\
    \bal^1&=V_{ax}\der x+V_{ay}\der y,\qquad   &\bal^2&=V_{bx}\der x+V_{by}\der y,
  \end{aligned}\]
   the  1-forms $\Gamma^a_{~b}$, constituting the $\gla_2(\bbR)$ part of the Levi-Civita  connection, read
  \[\begin{aligned}    \Gamma^1{}_1=&\frac{V_{aay}V_{bx}-V_{aax}V_{by}}{V_{ay}V_{bx}-V_{ax}V_{by}}\al^1+\frac{V_{aby}V_{bx}-V_{abx}V_{by}}{V_{ay}V_{bx}-V_{ax}V_{by}}\al^2\\
    \Gamma^1{}_2=&\frac{V_{aby}V_{bx}-V_{abx}V_{by}}{V_{ay}V_{bx}-V_{ax}V_{by}}\al^1+\frac{V_{bby}V_{bx}-V_{bbx}V_{by}}{V_{ay}V_{bx}-V_{ax}V_{by}}\al^2\\
    \Gamma^2{}_1=&\frac{V_{aay}V_{ax}-V_{aax}V_{ay}}{-V_{ay}V_{bx}+V_{ax}V_{by}}\al^1+\frac{V_{aby}V_{ax}-V_{abx}V_{ay}}{-V_{ay}V_{bx}+V_{ax}V_{by}}\al^2\\
     \Gamma^2{}_2=&\frac{V_{aby}V_{ax}-V_{abx}V_{ay}}{-V_{ay}V_{bx}+V_{ax}V_{by}}\al^1+\frac{V_{bby}V_{ax}-V_{bbx}V_{ay}}{-V_{ay}V_{bx}+V_{ax}V_{by}}\al^2.
  \end{aligned}\]
  It is straightforward to check that $\der\rho=0$, and  $\Gamma^1{}_4=\Gamma^4{}_1=0$, as it should be for the Levi-Civita connection in an adapted coframe of a para-K\"ahler structure.

  For the calculation of the Ricci tensor it is more convenient to work in the coordinate frame $(\der a,\der b,\der x, \der y)$ rather than in the adapted frame $(\al^1,\al^2,\bal^1,\bal^2)$. Thus, we need to  display the Levi-Civita connection 1-forms in the coordinate frame as well. Let us use the following notation for the coordinates
  $$x^A=(a,b), \quad x^{\dot{A}}=(x,y),\quad A=1,2,\quad \dot{A}=1,2.$$
  It follows that
  $$\Gamma^A{}_{\dot{A}}=\Gamma^{\dot{A}}{}_A=0,$$
  and the connection 1-forms $\Gamma^A{}_B$ and $\Gamma^{\dot{A}}{}_{\dot{B}}$ are given by \eqref{eq:Conn1Form-LocCoord} in the Appendix.

  Using the expressions for the Levi-Civita connection in \eqref{eq:Conn1Form-LocCoord} the curvature can be calculated easily and  the Ricci tensor satisfies
  $$R_{AB}=R_{\dot{A}\dot{B}}=0,$$
  and
  \[R_{A\dot{A}}=R_{\dot{A}A}=-\frac{\partial^2}{\partial_{x^A}\partial_{x^{\dot{A}}}}\log\big(V_{ax}V_{by}-V_{ay}V_{bx}\big).\]
  Since in the $(A,\dot{A})$ notation the metric $g$ reads as $$g=\sum_{A,\dot{A}=1,2}\frac{\partial^2V}{\partial_{x^A}\partial_{x^{\dot{A}}}}(\der x^A\otimes\der x^{\dot{A}}+\der x^{\dot{A}}\otimes\der x^A),$$  the Einstein equations are $$-\frac{\partial^2}{\partial_{x^A}\partial_{x^{\dot{A}}}}\log\big(V_{ax}V_{by}-V_{ay}V_{bx}\big)=\Lambda ~\frac{\partial^2V}{\partial_{x^A}\partial_{x^{\dot{A}}}},$$
 which after integration give
  $$V_{ax}V_{by}-V_{ay}V_{bx}=c_1 c_2~\mathrm{e}^{-\Lambda V}.$$
\end{proof}
There is a converse to this proposition:
\begin{proposition}
  Every para-K\"ahler structure $(M,g,K)$ in dimension four is locally expressible in terms of a para-K\"ahler potential function $V$ as in Proposition \ref{pkc}.
\end{proposition}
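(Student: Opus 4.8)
The plan is to prove a local converse: given an arbitrary para-K\"ahler structure $(M,g,K)$, I must produce coordinates $(a,b,x,y)$ and a potential $V$ reproducing the normal forms of Proposition \ref{pkc}. First I would invoke Proposition \ref{pkprop}: the para-K\"ahler condition guarantees that both eigendistributions $\cH$ and $\bar\cH$ are integrable. By the Frobenius theorem I can therefore choose local coordinates in which $\cH=\Span\{\partial_x,\partial_y\}$ and $\bar\cH=\Span\{\partial_a,\partial_b\}$, i.e. coordinates $(a,b)$ labeling the leaves of $\bar\cH$ and $(x,y)$ labeling the leaves of $\cH$ (after possibly relabeling so that the $\pm1$-eigenspace conventions match those of \eqref{ac2}). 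In such coordinates the adapted coframe spanning $\Lambda^{(1,0)}$ can be taken to have $\al^1,\al^2$ proportional to $\der a,\der b$ and $\bal^1,\bal^2$ lying in $\Span\{\der x,\der y\}$, which already forces $g=2\al^1\bal^1+2\al^2\bal^2$ to be a pairing between $\{\der a,\der b\}$ and $\{\der x,\der y\}$ with no $\der a\,\der b$ or $\der x\,\der y$ terms.

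Next I would extract the potential from the para-K\"ahler form $\rho$. Since $\rho$ is closed and, in the adapted coordinates above, is a sum of a $(\der a,\der b)$-type piece wedged against a $(\der x,\der y)$-type piece, the structure of $\der\rho=0$ together with the leafwise integrability lets me write $\rho = \der\lambda$ for a $1$-form $\lambda$ by the Poincar\'e lemma, and then organize $\lambda$ using the bigrading into $\Lambda^{(1,0)}\oplus\Lambda^{(0,1)}$. The key observation is that closedness of $\rho$ splits into pieces that are individually exact along the two transverse foliations, so that the mixed components of $\rho$ are governed by a single function $V(a,b,x,y)$ via $\rho=\der a\w(V_{ax}\der x+V_{ay}\der y)+\der b\w(V_{bx}\der x+V_{by}\der y)$. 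Concretely, I would show $\rho = \der a\w\eta^1 + \der b\w\eta^2$ with $\eta^i\in\Span\{\der x,\der y\}$, use $\der\rho=0$ to deduce that $\eta^1,\eta^2$ and their $(a,b)$-derivatives satisfy the compatibility conditions that are exactly the integrability conditions for the existence of $V$ with $\eta^1=V_{ax}\der x+V_{ay}\der y$ and $\eta^2=V_{bx}\der x+V_{by}\der y$, and then invoke the Poincar\'e lemma repeatedly to integrate up to $V$.

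Once $V$ is found, I would check that the metric recovered from $g(K\cdot,\cdot)=\rho$ together with the eigenspace structure of $K$ coincides with the normal form $g=2\der a(V_{ax}\der x+V_{ay}\der y)+2\der b(V_{bx}\der x+V_{by}\der y)$ of Proposition \ref{pkc}, and that the nondegeneracy of $g$ is equivalent to $\det\begin{pmatrix}V_{ax}&V_{ay}\\ V_{bx}&V_{by}\end{pmatrix}\neq0$, which is automatic since $g$ is a metric. The endomorphism $K$ in these coordinates is $\partial_a\otimes\der a+\partial_b\otimes\der b-\partial_x\otimes\der x-\partial_y\otimes\der y$ by construction of the eigendistributions, matching \eqref{ac2}. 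This closes the loop, exhibiting $(M,g,K)$ in exactly the form of Proposition \ref{pkc}.

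The main obstacle will be the integration step: verifying that the closedness $\der\rho=0$, when decomposed in the mixed bigrading, yields precisely the system of cross-derivative identities (symmetry of second partials such as $\partial_b(V_{ax})=\partial_a(V_{bx})$ and $\partial_y(V_{ax})=\partial_x(V_{ay})$) needed for a \emph{single} scalar potential $V$ to exist, rather than four independent functions of two variables each. I expect this to require a careful bookkeeping argument: one integrates the $(x,y)$-dependence first to define $V$ up to a function of $(a,b)$, then uses the remaining components of $\der\rho=0$ to pin down consistency, and finally absorbs the ambiguity into the choice of potential. A subtlety worth flagging is that $V$ is determined only up to addition of a function of $(a,b)$ alone plus a function of $(x,y)$ alone, since such terms do not affect the mixed second derivatives $V_{ax},V_{ay},V_{bx},V_{by}$; this gauge freedom is harmless and should be acknowledged.
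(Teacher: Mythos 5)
Your proposal is correct, but it takes a genuinely different route from the paper's. The paper works with an adapted null coframe: it uses the $\glg_2(\bbR)$ gauge freedom to normalize $\al^1=\der a$, $\al^2=\der b$, feeds this into the torsion-free structure equations \eqref{ca1} with $\Gamma^1{}_4=\Gamma^4{}_1=0$ (Proposition \ref{pkprop}) to deduce the leafwise exactness $Q_x=P_y$, $S_x=R_y$ (producing $U,W$ with $\bal^1=\der_{x,y}U$, $\bal^2=\der_{x,y}W$), and only then invokes $\der\rho=0$ to get $W_a-U_b=f(a,b)$, absorbs $f$, and integrates to $V$. You bypass the connection and the coframe normalization entirely: after simultaneously straightening the two transverse foliations (legitimate, since $\cH,\bar\cH$ are complementary and integrable--note this integrability is part of the para-K\"ahler \emph{definition}, not really a consequence of Proposition \ref{pkprop}), you observe that $K$-compatibility forces $\rho$ to have pure type $(1,1)$, i.e.\ $\rho=\der a\w\eta^1+\der b\w\eta^2$ with $\eta^i\in\Span\{\der x,\der y\}$, and then extract \emph{all four} cross-derivative identities from $\der\rho=0$ alone: the $\der a\w\der x\w\der y$ and $\der b\w\der x\w\der y$ components give leafwise closedness of $\eta^1,\eta^2$ (what the paper gets from torsion-freeness), while the $\der a\w\der b\w\der x$ and $\der a\w\der b\w\der y$ components give exactly $(W_a-U_b)_x=(W_a-U_b)_y=0$; two parametrized applications of the Poincar\'e lemma plus absorbing the $f(a,b)$ ambiguity then yield $V$, with the residual gauge $V\mapsto V+F(a,b)+G(x,y)$ correctly identified. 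This is in effect the para-complex analogue of the local $\partial\bar\partial$-lemma argument for K\"ahler potentials: more elementary and invariant, at the cost of not directly producing the adapted coframe normal form that the paper's EDS machinery reuses downstream. One small trim: your initial suggestion to write $\rho=\der\lambda$ globally and then ``organize $\lambda$ by bigrading'' is an unnecessary detour (a primitive need not respect the bigrading); your subsequent leafwise integration scheme is the step that actually does the work, and it suffices.
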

\begin{proof}
The integrability of the distributions  $\cH=\mathrm{Ker}\{\al^1,\al^2\}$ and $\bar{\cH}=\mathrm{Ker}\{\bal^1,\bal^2\}$ in a para-K\"ahler structure implies that in some neighbourhood ${\cU}\subset M$ there exists a coordinate system $(a,b,x,y)$ such that
\[\begin{matrix}[ll]
  \al^1=A_1\der a+B_1\der b,\quad &\al^2=A_2\der a+B_2\der b\\
  \bal^1=P_1\der x+Q_1\der y,\quad &\bal^2=P_2\der x+Q_2\der y,
\end{matrix}\]
for some  functions $A_i,B_i,P_i,Q_i$ defined in $\cU$. Since the coframe $(\al^1,\al^2,\bal^1,\bal^2)$ is defined up to the $\glg_2(\bbR)$ action  \eqref{ga1}, we can use this transformation to bring the coframe into the form
$$\begin{matrix}[ll]
  \al^1=\der a,\quad &\al^2=\der b\\
  \bal^1=P\der x+Q\der y,\quad&\bal^2=R\der x+S\der y,
\end{matrix}$$
with new functions $P,Q,R,S$ on $\cU$ such that $PS-QR\neq 0$. In this new adapted frame we have $\der\al^1=0$, $\der\al^2=0$. Inserting this into \eqref{ca1} with $\Gamma^1_4=\Gamma^4_1=0,$ we get
\[\Gamma^1{}_1\dz\al^1+\Gamma^1{}_2\dz\al^2=0\quad\& \quad\Gamma^2{}_1\dz\al^1+\Gamma^2{}_2\dz\al^2=0,\]
which implies
\[\begin{aligned}
  \Gamma^1{}_1&=a_1\al^1+a_2\al^2,\qquad   \Gamma^1{}_2=a_3\al^1+a_4\al^2\\
  \Gamma^2{}_1&=a_5\al^1+a_6\al^2,\qquad  \Gamma^2{}_2=a_7\al^1+a_8\al^2,
  \end{aligned}\]
for some unknown functions $a_1,a_2,\dots,a_8$ on $\cU$. Inserting this back into the last two  of the structure equations \eqref{ca1} gives
\[\begin{aligned}
  0=&(Q_x-P_y)\der x\dz\der y+\\&\big(P_b-a_2P-a_6R\big)\der b\dz\der x+\big(Q_b-a_2Q-a_6S\big)\der b\dz \der y+\\&\big(P_a-a_1P-a_5R\big)\der a\dz\der x+\big(Q_a-a_1Q-a_5S\big)\der a\dz\der y\\
  0=&(S_x-R_y)\der x\dz\der y+\\&\big(R_b-a_8R-a_4P\big)\der b\dz\der x+\big(S_b-a_8S-a_4Q\big)\der b\dz \der y+\\&\big(R_a-a_7R-a_3P\big)\der a\dz\der x+\big(S_a-a_7S-a_3Q\big)\der a\dz\der y.
\end{aligned}
\]
This in particular means that
\[(Q_x-P_y)=0\quad\mathrm{and}\quad (S_x-R_y)=0.\]
As a result, locally, there exist functions $U$ and $W$ on $\cU$ such that
\[Q=U_y,\quad P=U_x,\quad S=W_y,\quad R=W_x.\]
  Thus, one obtains
\[\begin{matrix}[ll]
  \al^1=\der a,\quad &\al^2=\der b\\
  \bal^1=U_x\der x+U_y\der y,\quad&\bal^2=W_x\der x+W_y\der y.
\end{matrix}\]
 Since the 2-form $\rho$ is given by
 \[\rho=\al^1\dz \bal^1+\al^2\dz\bal^2,\]
the  para-K\"ahler condition $\der\rho=0$ implies 
\[0=\der\rho=(W_a-U_b)_y\der a\dz\der b\dz \der y+(W_a-U_b)_x\der a\dz\der b\dz\der x.\]
 This means that $W_a-U_b=f(a,b)$ for some function $f$ of variables $a,b$ only. But since in the coframe $(\al^1,\al^2,\bal^1,\bal^2)$ functions $W$ and $U$ appear only in terms of their $x$ and $y$ derivatives, they can be chosen so that  $f(a,b)\equiv 0.$   Hence there exists a differentiable function $V=V(a,b,x,y)$ on $\cU$ such that
  \[W=V_b\quad\mathrm{and}\quad U=V_a.\]
  Thus, the adapted coframe can be expressed as
\[\begin{matrix}[ll]
  \al^1=\der a,\quad &\al^2=\der b,\\
  \bal^1=V_{ax}\der x+V_{ay}\der y,\quad&\bal^2=V_{bx}\der x+V_{by}\der y.
  \end{matrix}\]
Expressing $g$, $K$ and $\rho$ in terms of the adapted coframe as in  (\ref{ac2})-(\ref{ac3}) gives the Proposition.
\end{proof}
\subsubsection{Homogeneous models}
\label{sec:simple-examples}
Two particular solutions of the Einstein condition (\ref{ec}) are given by the potentials
  \begin{itemize}
  \item[i)] $V_1=-\frac{1}{\Psi_2'}\log(b+a x-y)$,
    \item[ii)] $V_2=-\frac{2}{3\Psi_2'}\log\big((1-\tfrac32 \Psi_2' ax)(1-\tfrac32\Psi_2'by)\big)$, 
  \end{itemize}
  where $\Psi_2'=\mathrm{const}\neq 0$.
  
  Both potentials are solutions of \eqref{ec} with $\Lambda=-3\Psi_2'$ and  $c_1c_2=\tfrac{1}{\Psi_2'{}^2}$ for $V_1$, and $c_1c_2=1$ for $V_2$. Let the para-K\"ahler structure $(g_i,K_i,\rho_i)$ correspond to $V_i$ where $i=1,2.$ Then, in an open set of $\bbR^4$ parametrized by $(a,b,x,y)$ one finds
\[ K_1=K_2=\partial_a\otimes\der a+\partial_b\otimes\der b-\partial_x\otimes\der x-\partial_y\otimes\der y.\]
 Straightforward computation gives
\[\begin{aligned}
    g_1=&\textstyle{\frac{2\der a \big((y-b)\der x-x\der y\big)+2\der b\big(a\der x-\der y\big)}{\Psi_2'(b+a x-y)^2}},\qquad     \rho_1=&\textstyle{\frac{\der a \dz\big((y-b)\der x-x\der y\big)+\der b\dz \big(a\der x-\der y\big)}{\Psi_2'(b+a x-y)^2}}\end{aligned}
  \]
    and 
\[\begin{aligned}
    g_2=&\textstyle{\frac{2\der a \der x}{(1-\tfrac32\Psi_2'ax)^2}+\frac{2\der b\der y}{(1-\tfrac32\Psi_2'bx)^2}},\qquad     \rho_2=&\textstyle{\frac{\der a \dz\der x}{(1-\tfrac32\Psi_2'ax)^2} +\frac{\der b\dz \der y}{(1-\tfrac32\Psi_2'by)^2}}.\end{aligned}
   \] 
   The potential $V_1$ corresponds to the homogeneous para-K\"ahler-Einstein structure referred to as  \emph{the dancing metric} in \cite{BN-rolling} which is the unique homogeneous model that is self-dual, i.e. $Weyl^-=0,$ and not anti-self-dual for which $\Psi'_2=1.$ The potential $V_2$ corresponds to the only other homogeneous para-K\"ahler-Einstein structure. It has the property that the Petrov type of $Weyl^-$ is \emph{D.} A  derivation of these potential functions is outlined in \ref{sec:petrov-type-d} and \ref{sec:type-o}. Finding explicit examples of  pKE structures in terms of potential functions satisfying the PDE \eqref{ec} is not an easy task. In the next section we use an alternative technique to give more explicit examples of pKE structures.
   
\section{Para-K\"ahler-Einstein (pKE) metrics in dimension 4}  
\label{sec:pke-4-manifolds}
This section is the heart of the article, in which we describe pKE structures as Cartan geometries, give an in-depth study when the Petrov type is \emph{real} and \emph{special} and provide explicit examples. 
To be more specific, in \ref{sec:pk-structures-as} pKE structures are interpreted as Cartan geometries of type $(\mathbf{SL}_3(\RR),\mathbf{GL}_2(\RR))$. If the Einstein constant is $-3,$ then they satisfy the Yang-Mills equations for the associated $\sla_3(\bbR)$-valued \emph{Cartan} connection.
In \ref{sec:cart-reduct-symm}  we focus on pKE structures for which $Weyl^-$ has special real Petrov type and give examples of each type.  In particular, we find all homogeneous models, give a local normal form for all real Petrov type $D$ pKE metrics and present examples of real Petrov  type   \emph{II} that satisfy Yang-Mills equations. Moreover, we use Cartan-K\"ahler machinery to find the local generality of all Petrov types assuming analyticity. 
   
\subsection{Cartan geometries of type 
  $(\mathbf{SL}_3(\RR), \mathbf{GL}_2(\RR))$}
\label{sec:pk-structures-as}
In order to view pKE structures as a Cartan geometry, let us specialize the EDS \eqref{ca1}, \eqref{ca21}-\eqref{ca22} to the case of para-K\"ahler-Einstein metrics. We have
\begin{equation}\label{pk}
  \begin{aligned}
\der\al^1&=-\Gamma^1_{~1}\dz\al^1-\Gamma^1_{~2}\dz\al^2,\\
\der\al^2&=-\Gamma^2_{~1}\dz\al^1-\Gamma^2_{~2}\dz\al^2,\\
\der\bal^1&=\Gamma^1_{~1}\dz\bal^1+\Gamma^2_{~1}\dz\bal^2,\\
\der\bal^2&=\Gamma^1_{~2}\dz\bal^1+\Gamma^2_{~2}\dz\bal^2,\\
\exd\Gamma^1_{~1}&=-\Gamma^1_{~2}\w\Gamma^2_{~1}+(-2\Psi'_2-\Psi_2)\al^1\w\bal^1+\Psi_1\al^1\w\bal^2-\Psi_3\al^2\w\bal^1+ (\Psi_2-\Psi'_2)\al^2\w\bal^2\\
\exd\Gamma^1_{~2}&=-\Gamma^1_{~1}\w\Gamma^1_{~2} -\Gamma^1_{~2}\w\Gamma^2_{~2}-\Psi_3\al^1\w\bal^1+(\Psi_2-\Psi'_2)\al^1\w\bal^2-\Psi_4\al^2\w\bal^1+\Psi_3\al^2\w\bal^2\\
\exd\Gamma^2_{~1}&=\Gamma^1_{~1}\w\Gamma^2_{~1}+\Gamma^2_{~1}\w\Gamma^2_{~2}+\Psi_1\al^1\w\bal^1-\Psi_0\al^1\w\bal^2 +(\Psi_2-\Psi'_2)\al^2\w\bal^1 -\Psi_1\al^2\w\bal^2\\
\exd\Gamma^2_{~2}&=\Gamma^1_{~2}\w\Gamma^2_{1}+(\Psi_2-\Psi'_2)\al^1\w\bal^1-\Psi_1\al^1\w\bal^2+\Psi_3\al^2\w\bal^1 +(-2\Psi'_2-\Psi_2)\al^2\w\bal^2\\
\end{aligned}
\end{equation}
where we have used  $\Gamma^1_{~4}=\Gamma^4_{~1}=0$ from Proposition \ref{pkprop}

As discussed in Remark \ref{rmk:pk-G-str} and \ref{rmk:2approaches}, the EDS \eqref{pk} can be regarded as the structure equations for the coframe on the principal $\mathbf{GL}_2(\mathbb{R})$-bundle $\cF^8\to M$ which is the  8-dimensional \emph{bundle of adapted null frames} for para-K\"ahler-Einstein structures. In fact, one can show that para-K\"ahler-Einstein structures correspond to  Cartan geometries of type    $(\mathbf{SL}_3(\mathbb{R}),\mathbf{GL}_2(\mathbb{R}))$. First let us define a Cartan geometry \cite{Sharpe,CS-Parabolic}.
\begin{definition}\label{def:cartan-conn-def}
  A \emph{Cartan geometry} $(\cG,S,\psi),$ of type $(G,H)$ is a principal $H$-bundle $\cG\to S,$ equipped with a $\mathfrak{g}$-valued 1-form $\cA,$ which is a \emph{Cartan connection}, i.e., 
  \begin{enumerate}
    \item $\cA_u:T_u\cG\rightarrow \mathfrak{g}$ is linear isomorphism for all $u\in \cG.$
    \item $\cA$ is $H$-equivariant, i.e., $R_h^*\cA=\mathrm{Ad}(h^{-1})\circ \cA,$ where $R_h$ denotes the right action by $h\in H.$ 
    \item $\cA(X_v)=v,$  for every fundamental vector field $X_v$ of $\tau:\cG\rightarrow S, v\in \mathfrak h.$
  \end{enumerate}
  The curvature of the Cartan connection $\cA$ is given by $K_\cA=\exd\cA+ \cA\w\cA \in\Omega^2(\cG,\mathfrak{g})$ which is horizontal and defines the \emph{curvature function} $\kappa_\cA : \cG\to \bigwedge^2(\mathfrak{g}/\mathfrak{h})^*\otimes\mathfrak{g}.$  
\end{definition}
 Let us now consider the $\mathfrak{sl}_3(\mathbb{R})$-valued 1-form
\begin{equation}
  \label{eq:SL3-pke-conn-A}
\cA:=\bma[c|c] 
 \Gamma-\tfrac13\mathrm{Tr}(\Gamma) \id_{2\times 2}&\al\\
 \cmidrule(lr){1-2}
 \bal&-\tfrac13\mathrm{Tr}(\Gamma)\ema,  
\end{equation}
where
\begin{equation}
  \label{eq:Gammas-A-pke-conn}
   \Gamma=\bma
 \Gamma^1{}_1&\Gamma^1{}_2\\
 \Gamma^2{}_1&\Gamma^2{}_2\ema,\qquad \al=\bma \al^1\\\al^2\ema,\qquad \bar\al=(\bal^1,\bal^2),
\end{equation}
Using $\cA$, the structure equations \eqref{pk} can be expressed as
\begin{equation}
  \label{eq:curvature2form}
K_\cA=\der \cA+\cA\dz \cA,
\end{equation}
where, using the 2-forms $\sigma_+^i$'s and $\sigma_-^i$'s in \eqref{asd-sd-null}, one has
\begin{equation}
\label{curvsl3}
\begin{aligned}
K_\cA=&\bma[c|c]
\begin{matrix}\Psi_1&1+\Psi_2-\Psi_2'\\
-\Psi_0&-\Psi_1\end{matrix}&0\\
\cmidrule(lr){1-2}
0&0
\ema \sigma^1_-+\bma[c|c]
\begin{matrix}\Psi_3&\Psi_4\\
-1-\Psi_2+\Psi_2'&-\Psi_3\end{matrix}&0\\
\cmidrule(lr){1-2}
0&0
\ema\sigma^2_-+\\
&\bma[c|c]
\begin{matrix}\tfrac12(1-2\Psi_2-\Psi_2')&-\Psi_3\\
\Psi_1&\tfrac12(\Psi_2'+2\Psi_2-1)\end{matrix}&0\\
\cmidrule(lr){1-2}
0&0
\ema\sigma^3_-+\\&\tfrac12(1-\Psi_2')\bma[c|c] \id_{2\times 2}&0\\
\cmidrule(lr){1-2}
0&-2\ema\sigma^3_+.
\end{aligned}
\end{equation}
One can interpret the 1-form $\cA$ as an $\sla_3(\bbR)$-valued Cartan connection on the principal $\mathbf{GL}_2(\mathbb{R})$-bundle $\cF^8\to M$ of null frames adapted to a pKE structure.  As a result, one obtains the following theorem.  
\begin{theorem}\label{so22}
Every pKE 4-manifold  defines a Cartan geometry of type $(\slg_3(\bbR),\glg_2(\bbR))$ for which the structure  bundle
$$\glg_2(\bbR)\to \cF^8\to M$$
is the bundle of adapted null frames. The curvature $K_\cA$ vanishes, i.e. the Cartan geometry is flat, if and only if  the Einstein constant is  -3 and the anti-self-dual Weyl tensor vanishes, i.e.
\[\Psi'_2=1\qquad\mathrm{and}\qquad Weyl^-=0.\] The flat model, i.e. $K_\cA=0,$ is locally equivalent to the para-K\"ahler-Einstein structure induced by the   dancing metric discussed in  \ref{sec:simple-examples}. 
\end{theorem}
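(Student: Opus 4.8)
The plan is to treat the statement in three stages: verify that the $\sla_3(\bbR)$-valued form $\cA$ of \eqref{eq:SL3-pke-conn-A} is a Cartan connection on $\cF^8\to M$ in the sense of Definition \ref{def:cartan-conn-def}; read off the flatness condition directly from the explicit curvature \eqref{curvsl3}; and finally invoke local uniqueness of flat Cartan geometries to identify the flat model with the dancing metric of \ref{sec:simple-examples}.

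For the first stage I would fix the Lie-algebra embedding $\iota_*\colon\gla_2(\bbR)\hookrightarrow\sla_3(\bbR)$, $v\mapsto\bigl(\begin{smallmatrix}v&0\\0&0\end{smallmatrix}\bigr)-\tfrac13\mathrm{Tr}(v)\,\id_{3\times 3}$, integrating to the group embedding $A\mapsto(\det A)^{-1/3}\bigl(\begin{smallmatrix}A&0\\0&1\end{smallmatrix}\bigr)\in\slg_3(\bbR)$, and check the three axioms. Axiom (1) is immediate: the trace normalization $-\tfrac13\mathrm{Tr}(\Gamma)$ makes $\cA$ genuinely $\sla_3(\bbR)$-valued, and its eight entries are invertible linear combinations of the eight pointwise-independent $1$-forms $(\al^1,\al^2,\bal^1,\bal^2,\Gamma^1{}_1,\Gamma^1{}_2,\Gamma^2{}_1,\Gamma^2{}_2)$ coframing $\cF^8$, so each $\cA_u$ is a linear isomorphism. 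For axiom (3), the semibasic forms $\al,\bal$ annihilate every fundamental vector field $X_v$, while the reduced Levi-Civita form satisfies $\Gamma(X_v)=v$ for $v\in\gla_2(\bbR)$ (Corollary \ref{cor:pk-structures-an}, Remark \ref{rmk:pk-G-str}); a one-line computation then gives $\cA(X_v)=\iota_*(v)$, and it is precisely the trace term together with the cube-root twist in $\iota$ that make this identity hold. Axiom (2), equivariance, is the step requiring the most care: under the principal right action of $\glg_2(\bbR)$ the soldering pair $(\al,\bal)$ transforms tensorially by the representation $T$, while $\Gamma$, being a genuine principal connection on the reduced bundle, transforms by the adjoint action under the constant right action---the inhomogeneous $\exd T(U)\,T(U)^{-1}$ term in \eqref{troa2} enters only under a change of local section, not under this action. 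The content is then to verify that these transformation laws assemble exactly into $\Ad(\iota(h)^{-1})\circ\cA$, matching the off-diagonal blocks of $\Ad\circ\iota$ against $(\al,\bal)$ and its diagonal block against $\Gamma-\tfrac13\mathrm{Tr}(\Gamma)\id$.

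For the second stage I would use that $\sigma^1_-,\sigma^2_-,\sigma^3_-,\sigma^3_+$ are linearly independent $2$-forms, so by \eqref{curvsl3} the vanishing $K_\cA=0$ is equivalent to the vanishing of all four coefficient matrices. The $\sigma^3_+$ coefficient $\tfrac12(1-\Psi_2')\,\mathrm{diag}(1,1,-2)$ vanishes iff $\Psi_2'=1$; imposing this, every nonzero entry of the three remaining $\sigma^i_-$ coefficients is one of $\Psi_0,\dots,\Psi_4$, so their simultaneous vanishing is exactly $Weyl^-=0$. Since $R=-12\Psi_2'$ by Remark \ref{rmk:SD-Weyl-Curv}, the condition $\Psi_2'=1$ is the same as scalar curvature $-12$, i.e. Einstein constant $-3$, giving the stated equivalence.

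For the third stage, $K_\cA=0$ means the Cartan geometry is flat, hence locally equivalent to the homogeneous model $\bigl(\slg_3(\bbR)\to\slg_3(\bbR)/\glg_2(\bbR),\,\omega_{MC}\bigr)$ with its Maurer-Cartan form; here I would invoke the standard local-equivalence theorem for flat Cartan geometries. The pKE structure carried by this model has $\Psi_2'=1$ and $Weyl^-=0$, and by the discussion in \ref{sec:simple-examples} these invariants characterize the dancing metric $V_1$ as the unique homogeneous self-dual (but not anti-self-dual) pKE structure with $\Psi_2'=1$; local uniqueness of the flat model then identifies the flat pKE structure with the dancing metric. The main obstacle throughout is the equivariance check of the first stage, since it is what forces the precise embedding $\iota$ and the trace normalization simultaneously; the identification in the third stage is conceptually routine once the flat-model uniqueness theorem is in hand.
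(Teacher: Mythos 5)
Your proposal is correct and follows essentially the same route as the paper: interpret the structure equations \eqref{pk} as $K_\cA=\der\cA+\cA\w\cA$ for the $\sla_3(\bbR)$-valued form \eqref{eq:SL3-pke-conn-A}, read the flatness condition $\Psi_2'=1$ (equivalently $\Lambda=-3$, via $R=-12\Psi_2'$) and $Weyl^-=0$ directly off the four independent coefficient matrices in \eqref{curvsl3}, and identify the flat model with the dancing metric on $\slg_3(\bbR)/\glg_2(\bbR)$ by local uniqueness of flat Cartan geometries. Your stage-one verification of the Cartan-connection axioms, including the embedding $\iota$ and the equivariance check, simply makes explicit what the paper asserts without proof, and it is carried out correctly.
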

Recall that  the exterior derivative of \eqref{eq:curvature2form} gives  the Bianchi identity 
\begin{equation}
  \label{eq:BianchiIden}
  DK_\cA:=\der K_\cA+\cA\dz K_\cA-K_\cA\dz \cA=0.
\end{equation}
As a result of the theorem above one obtains the following. 
\begin{proposition}\label{so222}
A 4-dimensional para-K\"ahler-Einstein structure satisfies the Cartan connection Yang-Mills equations, $D*K_\cA=0$,  if and only if  $\Psi_2'=1.$
\end{proposition}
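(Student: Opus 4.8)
The plan is to read off from the explicit curvature \eqref{curvsl3} that $K_\cA$ has essentially a single self-dual component. Indeed, the $\sigma^1_-,\sigma^2_-,\sigma^3_-$ terms are anti-self-dual, while the only self-dual contribution is $\tfrac12(1-\Psi_2')\,\Xi\,\sigma^3_+$, where $\Xi=\mathrm{diag}(1,1,-2)$ is the coefficient matrix appearing there. Since by \eqref{asd-sd-null} the forms $\sigma^i_\pm$ are the $\pm1$-eigenforms of $*$, applying the Hodge star flips the sign of every anti-self-dual block and fixes the self-dual one, so that
\[
*K_\cA=-K_\cA+(1-\Psi_2')\,\Xi\,\sigma^3_+ .
\]
First I would feed this into the Yang--Mills operator and invoke the Bianchi identity \eqref{eq:BianchiIden}, $DK_\cA=0$, together with the constancy of $\Psi_2'$ from Remark \ref{rmk:SD-Weyl-Curv}, to reduce the whole problem to
\[
D*K_\cA=(1-\Psi_2')\,D\!\left(\Xi\,\sigma^3_+\right).
\]

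Next I would evaluate $D(\Xi\,\sigma^3_+)$, where two observations keep the computation short. Because $\Xi$ is a constant matrix and $\sigma^3_+=\al^1\dz\bal^1+\al^2\dz\bal^2$ is precisely the para-K\"ahler form $\rho$ of \eqref{ac3}, which is closed, the $\der$-term drops out and $D(\Xi\sigma^3_+)=[\cA,\Xi]\dz\sigma^3_+$ with $[\cA,\Xi]=\cA\Xi-\Xi\cA$. A direct computation with the block form \eqref{eq:SL3-pke-conn-A} of $\cA$ then shows that $\Xi$ commutes with the $\glg_2$-block and merely rescales the solder forms, giving the purely off-diagonal $\sla_3$-valued $1$-form $[\cA,\Xi]=3\begin{pmatrix}0&-\al\\\bal&0\end{pmatrix}$ (in the block notation of \eqref{eq:Gammas-A-pke-conn}). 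Wedging with $\sigma^3_+=\rho$ produces
\[
D*K_\cA=3(1-\Psi_2')\begin{pmatrix}0&-\al\dz\rho\\\bal\dz\rho&0\end{pmatrix}.
\]

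The last step is to note that the off-diagonal blocks do not vanish: the $3$-forms $\al^i\dz\rho$ and $\bal^i\dz\rho$ are nonzero and independent, for instance $\al^1\dz\rho=\al^1\dz\al^2\dz\bal^2$. Hence $D*K_\cA=0$ forces $\Psi_2'=1$, which gives the converse direction. The \emph{if} direction is then immediate from the reduction itself: when $\Psi_2'=1$ the self-dual term disappears, so $*K_\cA=-K_\cA$ and $D*K_\cA=-DK_\cA=0$ by Bianchi, reflecting that the curvature is in this case anti-self-dual. I expect the only genuine obstacle to lie in the middle step, namely getting the graded-commutator sign conventions in $D$ correct and confirming that the surviving $\sigma^3_+$-term is truly nonzero rather than cancelled by a hidden contribution; but once $\der\rho=0$ and the commutator $[\cA,\Xi]$ are in hand, the nonvanishing is transparent.
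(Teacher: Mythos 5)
Your proof is correct, and its skeleton---splitting $K_\cA$ into self-dual and anti-self-dual parts via \eqref{curvsl3} and invoking the Bianchi identity \eqref{eq:BianchiIden}---is the same as the paper's. The \emph{if} direction coincides exactly with the paper's argument: $\Psi_2'=1$ iff $*K_\cA=-K_\cA$, whence $D*K_\cA=-DK_\cA=0$ (the Hodge star being applicable on $\cF^8$ because $K_\cA$ is horizontal). Where you genuinely diverge is the converse, which the paper dismisses as a ``straightforward computation'' combining $D*K_\cA=0$ with the componentwise Bianchi identities \eqref{bi} and the EDS \eqref{pk}. Your structural shortcut replaces that component check with the identity $*K_\cA=-K_\cA+(1-\Psi_2')\,\Xi\,\sigma^3_+$ and the reduction $D*K_\cA=(1-\Psi_2')\,[\cA,\Xi]\w\sigma^3_+$, and every step survives scrutiny: $\sigma^3_+$ is the lift of $\rho$ and the structure equations \eqref{pk} give $\der\sigma^3_+=0$ on $\cF^8$ (the $\Gamma^1{}_1,\Gamma^2{}_2$ contributions cancel within each summand and the $\Gamma^1{}_2,\Gamma^2{}_1$ contributions cancel between the two summands of $\al^1\w\bal^1+\al^2\w\bal^2$---this is exactly the one place you flagged where a hidden term could enter, and it is worth displaying in a final write-up); the commutator computation is right, since $\Xi=\mathrm{diag}(1,1,-2)$ commutes with the $\glg_2(\bbR)$ block of \eqref{eq:SL3-pke-conn-A} and rescales the solder blocks by $1$ and $-2$, giving $[\cA,\Xi]=3\left(\begin{smallmatrix}0&-\al\\ \bal&0\end{smallmatrix}\right)$; and the nonvanishing claim holds because $\al^1\w\rho=\al^1\w\al^2\w\bal^2$, $\al^2\w\rho=-\al^1\w\al^2\w\bal^1$, $\bal^1\w\rho=\al^2\w\bal^1\w\bal^2$, $\bal^2\w\rho=\al^1\w\bal^1\w\bal^2$ are four linearly independent $3$-forms, so no cancellation can occur and $D*K_\cA=0$ forces $\Psi_2'=1$. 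What your route buys over the paper's is an explicit closed formula, $D*K_\cA=3(1-\Psi_2')\left(\begin{smallmatrix}0&-\al\w\rho\\ \bal\w\rho&0\end{smallmatrix}\right)$, exhibiting the Yang--Mills obstruction intrinsically as the para-K\"ahler form wedged against the solder form, with no appeal to the curvature components $\Psi_i$ or their coframe derivatives at all.
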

\begin{proof}
By \eqref{curvsl3} and the definition of self-dual and anti-self-dual null planes in  \eqref{asd-sd-null},  it follows that 
\[\Psi'_2=1\Longleftrightarrow *K_\cA=-K_\cA.\]
Since the curvature $K_\cA$ of a Cartan connection is always horizontal (see Definition \ref{def:cartan-conn-def} and \eqref{curvsl3}), one can apply the Hodge star to  $K_\cA$ defined on ${\cF}^8$.   
Applying  the Bianchi identity \eqref{eq:BianchiIden}, one obtains $D*K_\cA=-DK_\cA=0.$ Alternatively, by taking a section $s\colon M\to\cF^8,$ computing the curvature and applying the Hodge star    one can verify the claim.
 
Conversely, it is  a matter of straightforward computation to show that  $D*K_\cA=0$ combined with the Bianchi identities \eqref{eq:BianchiIden}, or equivalently equations \eqref{bi}, and the EDS \eqref{pk} imply $\Psi'_2=1.$
\end{proof}
\begin{remark}
    Note that pKE structures can also be associated to Cartan geometries of type $(\RR^4\rtimes \mathbf{GL}_2(\RR),\mathbf{GL}_2(\RR))$  whose flat model satisfies $\Psi'_2=0$ and $Weyl^-=0$.  This point of view is however not desirable for the purpose of this article, since we always assume $\Psi_2'\neq 0.$  
  \end{remark}

\subsection{Cartan reduction: homogeneous models, examples and local generality} 
\label{sec:cart-reduct-symm}
In this section we carry out the Cartan reduction procedure for pKE metrics whose anti-self-dual Weyl curvature has non-generic real Petrov type. Our reduction will not be exhaustive and will omit Petrov type $G$. We will  describe the reduction procedure for Petrov type $II$ and $D$ in detail, give a complete local normal form for type $D$ pKE metrics, and  use the same method to find examples of  Petrov types $III$ and $N$. The reduction allows us to find all homogeneous models and use the Cartan-K\"ahler theory to find the local generality of all  Petrov types assuming analyticity.
\subsubsection{Reduction for special real Petrov types}\label{sec:redii} 
Recall from \eqref{eq:quartic_ASD_Weyl}  the quartic 
\begin{equation}
  \label{eq:quartic2}
  W(\lambda)=\Psi_4\lambda^4+4\Psi_3\lambda^3+6\Psi_2\lambda^2+4\Psi_1\lambda+\Psi_0
\end{equation}
where $\lambda$ is the affine parameter for the anti-self-dual null planes in \eqref{eq:sd-asd-parametrized} for a given choice of adapted coframe. Taking a different  choice of coframe, $\tilde{\theta_1},\dots,\tilde\theta_4,$ by the action of the structure group, as in \eqref{ga1}, one obtains a quartic whose coefficients,$\tilde\Psi_0,\dots,\tilde\Psi_4,$ can be expressed in terms of $\Psi_i$'s and the elements $a_{11},a_{12},a_{21},a_{22}$ of the matrix $A\in\glg_2(\bbR).$ For instance, one obtains
\begin{equation}
  \label{eq:tilde-psi_0}
  \tilde\Psi_0=\textstyle{\frac{1}{\mathrm{det}(A)}(a^4_{21}\Psi_4-4a_{21}^3a_{22}\Psi_3+6a_{21}^2a_{22}^2\Psi_2 -4a_{21}a_{22}^3\Psi_1+a_{22}^4\Psi_0)}.
\end{equation}
\begin{remark}\label{rmk:infinit-action-psi0}
  Note that the infinitesimal form of  the   transformation law for $\Psi_0$ given in \eqref{eq:tilde-psi_0} is represented by the Bianchi identity for $\Psi_0$ in  \eqref{eq:bi2}. For a discussion on obtaining the group action from its infinitesimal see \cite{Gardner}.
\end{remark}
It is clear from \eqref{eq:tilde-psi_0} that if $W(\lambda_0)=0$ then with respect to the coframe obtained from the action of \eqref{ga2} where $a_{11}=a_{22}=1,a_{12}=0,$ and $a_{21}=-\lambda_0$, the root $\lambda_0$ would be translated to zero, i.e. $\tilde\Psi_0=0$ in this choice of coframe.

If $Weyl^-$ has a repeated root then by our discussion above, there is a coframe adaptation with respect to which the double root is translated to zero. 
If the  root has multiplicity $k\leq 4$, in the newly adapted coframe we have 
\begin{equation}
\label{eq:typeII-translated}
  \Psi_0=\dots=\Psi_{k-1}= 0,\quad \mathrm{and}\quad \Psi_k\neq 0.
\end{equation}
Using the group action on $\Psi_i$'s or, equivalently the Bianchi identities  \eqref{bi}, it follows that the bundle of adapted coframes that preserves the condition \eqref{eq:typeII-translated} gives rise to a 7-dimensional principal bundle $\cF^7\to M.$ More precisely, the new adapted coframes were determined by a choice of $a_{21}$ in \eqref{ga2} as a result of which the  structure group is reduced to $\HH_{(1)}\subset \SOtt$ defined as
\begin{equation}
  \label{eq:typeII-red-str-gp}
\HH_{(1)}=\left\{T(U)=
  \begin{pmatrix}
    A & 0\\
    0 & -A^T
  \end{pmatrix}\ \vline\ \ A=
  \begin{pmatrix}
  a_{11} & a_{12}\\
  0   & a_{22}
  \end{pmatrix}\in \mathbf{GL}_2(\mathbb{R})\right\}.
\end{equation}
Using the    gauge transformations \eqref{troa2} arising from the structure group $\HH_{(1)}$ for   adapted coframes with respect to which \eqref{eq:typeII-translated} holds one obtains that the transformation of the connection 1-form $\Gamma^2_{~1}$ does not involve the inhomogeneous terms $\exd(T(U))T(U)^{-1}$ and  therefore the following proposition holds. 
\begin{proposition}\label{typ2} 
Given a pKE metric, if the anti-self-dual Weyl curvature has special real Petrov type, i.e. it  has a repeated root whose multiplicity is at least 2, then the bundle of adapted coframes which preserves the condition 
\eqref{eq:typeII-translated} is given by a principal $\HH_{(1)}$-bundle $\cF^7\to M.$ Adapted coframes $(\alpha,\bar\alpha)$ arising as sections of $\cF^7$ satisfy \eqref{pk} where 
\begin{equation}
  \label{eq:Gamma21-reduced}
  \Gamma^2_{~1}=J_1\al^2+J_2\bal^1,
\end{equation}
for  some functions $J_1$ and $J_2$ on $M$. 
   \end{proposition}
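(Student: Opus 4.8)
The plan is to prove the statement in three stages: the reduction of the structure group to $\HH_{(1)}$, the semibasic character of $\Gamma^2_{~1}$ on the reduced bundle, and the vanishing of its $\al^1$- and $\bal^2$-components. For the reduction, recall that by hypothesis the quartic $W(\lambda)$ of \eqref{eq:quartic2} carries a repeated root at each point; the coframe rotation preceding \eqref{eq:typeII-translated} places it at $\lambda=0$, so that $\Psi_0=\Psi_1=0$ (and $\Psi_2\neq0$ for a double root). The transformation law \eqref{eq:tilde-psi_0} --- equivalently, the observation that under \eqref{ga2} the root at $\lambda=0$ is carried to $\lambda=a_{21}/a_{11}$ --- shows that the stabilizer in $\glg_2(\bbR)$ of this root is exactly the upper-triangular subgroup $\HH_{(1)}$ of \eqref{eq:typeII-red-str-gp}. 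Hence the adapted coframes obeying \eqref{eq:typeII-translated} form a principal $\HH_{(1)}$-subbundle $\cF^7\subset\cF^8$ of dimension $4+3=7$, whose members satisfy \eqref{pk} by restriction.

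Next I would check that $\Gamma^2_{~1}$ is semibasic on $\cF^7$. For upper-triangular $A$ both $\exd A$ and $A^{-1}$ are upper triangular, so the $(2,1)$-entry of $\exd(T(U))\,T(U)^{-1}$ vanishes; by \eqref{troa2} the gauge transformation of $\Gamma^2_{~1}$ under $\HH_{(1)}$ therefore carries no inhomogeneous term. Equivalently (cf. Remark \ref{rmk:2approaches}), on a fundamental vector field of $\HH_{(1)}$ the form $\Gamma^2_{~1}$ returns the $(2,1)$-entry of an upper-triangular generator, namely $0$, so $\Gamma^2_{~1}$ annihilates the vertical directions of $\cF^7\to M$ and may be written $\Gamma^2_{~1}=J_0\al^1+J_1\al^2+J_2\bal^1+J_3\bal^2$. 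I would then record the geometric reformulation that $J_0=J_3=0$ is precisely the assertion that $\Gamma^2_{~1}$ vanishes on the distinguished anti-self-dual null plane $\Span\{\tfrac{\partial}{\partial\al^1},\tfrac{\partial}{\partial\bal^2}\}$ attached to the root $\lambda=0$ in \eqref{eq:sd-asd-parametrized}.

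The decisive step is $J_0=J_3=0$, where the Einstein condition enters. Writing $\Psi_{k;i}$ for the coframe derivatives and differentiating the defining identities $\Psi_0=\Psi_1=0$ along $\cF^7$ using the Bianchi identities \eqref{bi}: the identity for $\Psi_0$, whose weight- and $\Gamma^2_{~1}$-terms are proportional to $\Psi_0$ and $\Psi_1$, collapses to $\Psi_{0;i}\theta^i=0$, so all coframe derivatives of $\Psi_0$ vanish on $\cF^7$; the identity for $\Psi_1$ collapses to $\Gamma^2_{~1}=c\,\Psi_2^{-1}\Psi_{1;i}\theta^i$ for a nonzero numerical constant $c$, whence $J_a\propto\Psi_{1;a}$. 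It thus remains to see that the derivatives of $\Psi_1$ along $\theta^1=\al^1$ and $\theta^4=\bal^2$ --- the two directions spanning the $\lambda=0$ null plane --- vanish. This is furnished by the once-contracted second Bianchi identity, which for an Einstein metric asserts that the anti-self-dual Weyl spinor is divergence-free, $\nabla^{A}{}_{\dot{A}}\Psi_{ABCD}=0$; its $\Psi_1$-component reads $\nabla_{0\dot{A}}\Psi_1=\nabla_{1\dot{A}}\Psi_0$, and the right-hand side vanishes on $\cF^7$ by the previous step. Hence $\Psi_{1;1}=\Psi_{1;4}=0$, so $J_0=J_3=0$ (using $\Psi_2\neq0$), giving $\Gamma^2_{~1}=J_1\al^2+J_2\bal^1$. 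For a root of higher multiplicity, where $\Psi_2$ itself vanishes, the same conclusion is reached by feeding the vanishing coframe derivatives into the next Bianchi identity in identical fashion.

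The genuine obstacle is this last stage: the reduction to $\HH_{(1)}$ and the semibasic character of $\Gamma^2_{~1}$ are essentially formal, whereas forcing $J_0=J_3=0$ requires extracting the correct divergence-free consequence of the Einstein equations and matching its spinorial content to the $\al^1$- and $\bal^2$-components of $\Gamma^2_{~1}$. I would take particular care with the index bookkeeping --- especially the identification of $\al^1,\bal^2$ as the undotted-index-$0$ directions dual to the $\lambda=0$ plane --- since a slip there would misallocate which components of $\Gamma^2_{~1}$ are annihilated and break the argument.
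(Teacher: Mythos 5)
Your proof is correct and takes essentially the same route as the paper: the paper's own (very terse) proof consists precisely of inserting $\Psi_0=\cdots=\Psi_{k-1}=0$ into the Bianchi identities \eqref{bi}, where $\der\Psi_0=0$ forces $\Psi_{01}=\Psi_{11}=\Psi_{14}=\Psi_{04}=0$ and the identity for $\Psi_{k-1}$ then yields $\Gamma^2_{~1}=J_1\al^2+J_2\bal^1$, cascading through successive identities exactly as in your higher-multiplicity remark. The contracted second Bianchi (divergence-free Weyl spinor) identity you invoke at your decisive step is already baked into the notation of \eqref{bi} --- the labels $\Psi_{11},\Psi_{14}$ appear simultaneously as components of $\der\Psi_0$ and $\der\Psi_1$ --- so your spinorial detour re-derives, rather than adds to, the paper's mechanism.
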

   \begin{proof}
        The proof of the proposition  simply follows from the Bianchi identity for $\Psi_{k-1}$ in \eqref{bi}  by inserting \eqref{eq:typeII-translated}, which results in \eqref{eq:Gamma21-reduced}.
   \end{proof}
\begin{remark}
As a result of  the proof above, one can express the quantities $J_1$ and $J_2$ in \eqref{eq:Gamma21-reduced}  in terms of $\Psi_{ij}$'s in  \eqref{bi}. For instance, for $k=2$ in \eqref{eq:typeII-translated} one obtains 
\[J_1=\textstyle{\frac{-1}{3\Psi_2}\Psi_{21}},\qquad J_2=\textstyle{\frac{1}{3\Psi_2}\Psi_{24}}.\]
where $\Psi_{24}$ and $\Psi_{21}$ are referred to as the \emph{coframe derivatives} of $\Psi_2.$  However, we will not take this point of view in order to avoid fractional expressions.  
\end{remark}
A second coframe adaptation will result in the following proposition.
\begin{proposition}\label{rei} 
  Every 4-dimensional pKE structure whose anti-self-dual Weyl tensor is of special real Petrov type  defines  a Cartan geometry of type $(\SOtt,\mathbf{T}^2)$  where $\mathbf{T}^2\cong \bbS^1\times\bbS^1$ is the maximal torus in $ \SOtt.$ The structure equations are given by \eqref{pk} where 
\begin{equation}
  \label{eq:J16-typeIID}
  \begin{aligned}
\Gamma^2_{~1}=&J_1\al^2+J_2\bal^1,\\
  \Gamma^1_{~2}=&-J_3\al^1+J_6\al^2+J_5\bal^1+J_4\bal^2
\end{aligned}
\end{equation}
for some functions $(J_1,J_2,J_3,J_4,J_5,J_6)$ on $M.$ The $\sott$-valued Cartan connection can be represented as 
\begin{equation}
\label{cacoB}
\cB=\bma[c|c]\begin{matrix}\textstyle{\frac{1}{2}\Gamma^1_{~1}}&\sqrt{\tfrac32|\Psi_2'|}~\al^1\\ \sqrt{\tfrac32|\Psi_2'|}~\bal^1&-\textstyle{\frac{1}{2}\Gamma^1_{~1}}\end{matrix}&0\\
\cmidrule(lr){1-2}
0&\begin{matrix}\textstyle{\frac{1}{2}\Gamma^2_{~2}}&\sqrt{\tfrac32|\Psi_2'|}~\al^2\\\sqrt{\tfrac32|\Psi_2'|}~\bal^2&-\textstyle{\frac{1}{2}\Gamma^2_{~2}}
\end{matrix}
\ema.
\end{equation}
The flatness condition for the resulting Cartan geometry, $K_{\cB}=\der \cB+\cB\dz \cB=0, $    is equivalent to     $J_1= J_2= J_3= J_4= J_5= J_6=\Psi_4= 0$, and $\Psi_2= \Psi_2'$ and implies that $Weyl^-$ has Petrov type $D$.  
  \end{proposition}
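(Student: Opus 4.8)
The plan is to perform a second Cartan reduction on the bundle $\cF^7\to M$ of Proposition \ref{typ2}, to exhibit the result as a Cartan geometry of type $(\SOtt,\mathbf{T}^2)$, and to read off flatness from a direct computation of $K_{\cB}$. I would begin by examining the residual gauge freedom. The structure group after Proposition \ref{typ2} is the upper-triangular group $\HH_{(1)}$ of \eqref{eq:typeII-red-str-gp}, which is $3$-dimensional in $a_{11},a_{12},a_{22}$, so $\dim\cF^7=7$. Inserting an upper-triangular $A$ into the inhomogeneous term $\exd(T(U))\,T(U)^{-1}$ of \eqref{troa2} shows that $\Gamma^1{}_1,\Gamma^2{}_2$ acquire the pieces $\exd a_{11}/a_{11},\exd a_{22}/a_{22}$ and so stay genuine connection forms for the two scalings, that $\Gamma^2{}_1$ has no inhomogeneous part and is already semibasic (this is \eqref{eq:Gamma21-reduced}), and that $\Gamma^1{}_2$ alone carries the $\exd a_{12}$ inhomogeneity. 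I would then use the unipotent parameter $a_{12}$ to normalize the one residual structure function on which it acts affinely (a suitable coframe derivative of the surviving $Weyl^-$ data, its infinitesimal action being encoded in \eqref{bi}), thereby fixing $a_{12}$, reducing the structure group to the diagonal maximal torus $\mathbf{T}^2\subset\SOtt$, and forcing $\Gamma^1{}_2$ to become semibasic as in \eqref{eq:J16-typeIID}. This produces the $6$-dimensional bundle $\cF^6\to M$.

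Next I would verify that $\cB$ in \eqref{cacoB} is a Cartan connection of type $(\SOtt,\mathbf{T}^2)$. Under $\sott\cong\sla_2(\RR)\oplus\sla_2(\RR)$ the block-diagonal $\cB$ is $\sott$-valued, and its six entries $\tfrac12\Gamma^1{}_1,\tfrac12\Gamma^2{}_2,c\,\al^1,c\,\al^2,c\,\bal^1,c\,\bal^2$, with $c=\sqrt{\tfrac32|\Psi_2'|}$, are pointwise independent; hence $\cB_u\colon T_u\cF^6\to\sott$ is a linear isomorphism, while $\mathbf{T}^2$-equivariance and reproduction of the torus fundamental vector fields follow because $\Gamma^1{}_1,\Gamma^2{}_2$ are precisely the connection forms of the $a_{11},a_{22}$ directions. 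The constant $c$ is pinned by requiring the Einstein ($\Psi_2'$) part of the curvature to match the Maurer--Cartan structure of $\SOtt$: with $c^2=\tfrac32\Psi_2'$ (taking $\Psi_2'>0$ for definiteness) the coefficient of $\al^i\w\bal^i$ in the diagonal curvature becomes $\tfrac12(\Psi_2'-\Psi_2)$, the absolute value together with a sign convention in the off-diagonal slots accommodating both signs of $\Psi_2'$.

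Finally I would compute $K_{\cB}=\exd\cB+\cB\w\cB$ block by block from \eqref{pk}; as $\cB$ is block diagonal so is $K_{\cB}$. Using $\exd\al^i,\exd\bal^i$ from \eqref{pk}, the off-diagonal entries collapse to $-c\,\Gamma^1{}_2\w\al^2$ and $c\,\Gamma^2{}_1\w\bal^2$ in the first block and to $-c\,\Gamma^2{}_1\w\al^1$ and $c\,\Gamma^1{}_2\w\bal^1$ in the second; their vanishing forces $\Gamma^1{}_2$ to be proportional to both $\al^2$ and $\bal^1$ and $\Gamma^2{}_1$ to both $\al^1$ and $\bal^2$, hence $\Gamma^1{}_2=\Gamma^2{}_1=0$, i.e. $J_1=\dots=J_6=0$. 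Substituting $\exd\Gamma^1{}_1,\exd\Gamma^2{}_2$ from \eqref{pk} into the diagonal entries, the $J$-quadratic terms drop out and the chosen $c$ turns the $\al^i\w\bal^i$ coefficients into $\tfrac12(\Psi_2'-\Psi_2)$, while the surviving terms are multiples of $\Psi_1$ and $\Psi_3$; since $\Psi_0=\Psi_1=0$ from the root translation, vanishing of the diagonal gives $\Psi_2=\Psi_2'$ and $\Psi_3=0$. To reach $\Psi_4=0$ I would use that once $\Gamma^1{}_2\equiv0$ the right-hand side of its structure equation in \eqref{pk} must vanish identically, which with $\Psi_2=\Psi_2'$ returns $\Psi_3=\Psi_4=0$; the converse implication is immediate by back-substitution. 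Then $\Psi_0=\Psi_1=\Psi_3=\Psi_4=0$ and $\Psi_2=\Psi_2'\neq0$, so the quartic \eqref{eq:quartic_ASD_Weyl} is $6\Psi_2\lambda^2$, with a double root at $0$ and a double root at $\infty$, whence $Weyl^-$ is of Petrov type $D$.

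I expect the main obstacle to be the second reduction itself: identifying, uniformly across the special real types, exactly which coframe derivative of the Weyl data transforms purely affinely under $a_{12}$ and may be set to a constant, a computation driven entirely by the Bianchi identities \eqref{bi}. A secondary but genuine nuisance is the sign bookkeeping in $c=\sqrt{\tfrac32|\Psi_2'|}$, which reflects whether each $\bbS^1$ factor of $\mathbf{T}^2$ is compact or split according to $\sgn(\Psi_2')$ and forces a compensating sign in the off-diagonal entries of $\cB$ when $\Psi_2'<0$.
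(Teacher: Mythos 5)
Your proposal is correct and follows essentially the same route as the paper: a second reduction using the unipotent parameter $a_{12}$ (the paper translates $\Psi_3$ to zero for types $II$/$D$, $\Psi_4$ for type $III$, and the coframe derivative $\Psi_{43}$ for type $N$ with $J_2\neq 0$, the affine action being read off the Bianchi identities exactly as you describe), followed by the block-diagonal $\sla_2(\RR)\oplus\sla_2(\RR)$-valued connection $\cB$ and a direct computation of $K_\cB$. Your flatness argument, including deriving $\Psi_3=\Psi_4=0$ and $\Psi_2=\Psi_2'$ from the vanishing right-hand side of the structure equation for $\Gamma^1_{~2}$, and the sign caveat in $\sqrt{\tfrac32|\Psi_2'|}$ when $\Psi_2'<0$, matches and even fleshes out what the paper dismisses as ``a straightforward computation.''
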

\begin{proof}
 Assuming that the Petrov type is $II$ or $D$, the adaptation \eqref{eq:typeII-translated} reads
 \begin{equation}
   \label{eq:typeII-psi01-translated}
   \Psi_0=\Psi_{1}=0,\quad \mathrm{and}\quad \Psi_2\neq 0.
\end{equation}
If the conditions \eqref{eq:typeII-psi01-translated} are to be preserved,   by Proposition \ref{typ2}, one has \eqref{eq:Gamma21-reduced} and, using the Bianchi identities \eqref{bi},  one obtains the  differential relations
\begin{equation}
\label{pkIIa}
 \begin{aligned}
  \der J_1=& J_1\Gamma^1_{~1}-J_1^2\al^1+J_{12}\al^2+J_{13}\bal^1+J_1J_2\bal^2,\\
  \der J_2=&-J_2\Gamma^2_{~2}-J_1J_2\al^1+(J_{13}+\Psi_2-\Psi_2')\al^2+J_{23}\bal^1+J_2^2\bal^2,\\
    \der\Psi_2'=&0,\\
    \der\Psi_2=&-3J_1\Psi_2\al^1+(2J_1\Psi_3+\Psi_{31})\al^2+(2J_2\Psi_3-\Psi_{34})\bal^1+3J_2\Psi_2\bal^2,\\
    \der\Psi_3=&3\Psi_2\Gamma^1_{~2}-\Psi_3(\Gamma^1_{~1}-\Gamma^2_{~2})-\Psi_{31}\al^1-(J_1\Psi_4+\Psi_{41})\al^2+(J_2\Psi_4-\Psi_{44})\bal^1 -\Psi_{34}\bal^2,\\
    \der\Psi_4=&4\Psi_3\Gamma^1_{~2}-2\Psi_4(\Gamma^1_{~1}-\Gamma^2_{~2})+\Psi_{41}\al^1+\Psi_{42}\al^2+\Psi_{43}\bal^1+\Psi_{44}\bal^2
 \end{aligned}
  \end{equation}
for some functions $J_{12},J_{13},J_{23}$ on $M.$  These relations are obtained by inserting \eqref{eq:Gamma21-reduced}  in the structure equations \eqref{pk} and requiring that the exterior derivative of the right hand side of the equations are zero.

Let the quantities $J_i,\Psi_j,\Psi'_2$ and $\tilde J_i,\tilde\Psi_j,\tilde\Psi'_2$ be the quantities appearing in the structure equations for two choices of   such adapted coframe related via $\tilde\theta^a=T(U)^a{}_b\theta^b$. Using the structure group $\HH_{(1)}$ as in \eqref{eq:typeII-red-str-gp} and its induced gauge transformation \eqref{troa}, one obtains
 \begin{equation}
\label{pkIIb}
\begin{aligned}
\tilde J_1&= a_{11}^{-1}J_1,\qquad  &&\tilde J_2=a_{22} J_2,\\
\tilde\Psi_2'&=\Psi_2',\qquad   &&\tilde\Psi_2=\Psi_2,\\
\tilde\Psi_3&=\textstyle{\frac{1}{a_{22}}(-3a_{12}\Psi_2+a_{11}\Psi_3)},\qquad &&\tilde\Psi_4=\textstyle{\frac{1}{a_{22}^2} (6a_{12}^2\Psi_2-4a_{11}a_{12}\Psi_3+a_{11}^2\Psi_4)}.
    \end{aligned}
 \end{equation}
As was mentioned in Remark \ref{rmk:infinit-action-psi0}, the infinitesimal version of  the transformations above are given by the Bianchi identities \eqref{pkIIa}.

Consequently, using the transformation law for $\Psi_3$ given in \eqref{pkIIb}, we can further restrict to the bundle of adapted coframes with respect to which 
\begin{equation}
  \label{eq:tyepII-reduction-2}
\Psi_0=\Psi_1=\Psi_3=0,\qquad \Psi_2\neq 0.
\end{equation}
This can be seen explicitly from \eqref{pkIIb} by setting
\[a_{12}=\frac{\Psi_3}{3\Psi_2}a_{11}.\]
As a result, when the Petrov type of $Weyl^-$ is $II$ or $D,$ the bundle of adapted coframes preserving \eqref{eq:tyepII-reduction-2} gives rise to a principal $\HH_{(2)}$-bundle $\cF^6\to M$ where
\begin{equation}
  \label{eq:typeII-red-str-gp-2}
\HH_{(2)}:=\left\{T(U)=
  \begin{pmatrix}
    A & 0\\
    0 & -A^T
  \end{pmatrix}\ \vline\ \ A=
  \begin{pmatrix}
  a_{11} & 0\\
  0   & a_{22}
  \end{pmatrix}\in \mathbf{GL}_2(\mathbb{R})\right\}.
\end{equation}
Since for such coframes the gauge transformations \eqref{troa2} do not affect $\Gamma^1_{~2}$ and $\Gamma^2_{~1}$ by the inhomogeneous term $\exd(T(U))T(U)^{-1}$,  
one obtains the expressions \eqref{eq:J16-typeIID} for some functions $(J_1,J_2,J_3,J_4,J_5,J_6)$ satisfying the Bianchi identities \eqref{eq:Bianchies-TypeIID-1}.

    It follows that the set of coframes adapted to the condition \eqref{eq:tyepII-reduction-2} gives a principal $\mathbf{T}^2$-bundle which is  equipped by the Cartan connection $\cB.$ The flatness condition follows from a straightforward computation. 

For real  Petrov types $III$ and $N$ one  needs to find the appropriate reduction of the structure bundle for pKE metrics and  proceeds analogously to find the Cartan connection \eqref{cacoB}. If the Petrov type of $Weyl^-$ is $III,$ the desired principal $\mathbf{T}^2$-bundle is given by the set of adapted null coframes with respect to which
  \[\Psi_0=\Psi_1=\Psi_2=\Psi_4=0,\qquad \Psi_3\neq 0.\]
If the Petrov type is $N,$ by Proposition \ref{typ2} one can consider the bundle of adapted coframes with respect to which
  \[\Psi_3=\Psi_2=\Psi_1=\Psi_0=0,\qquad \Psi_4\neq 0.\]
  For such adapted  coframes the 1-forms $\Gamma^2_{~1}$ is reduced as in \eqref{eq:Gamma21-reduced}. Subsequently, one can use  identities \eqref{bi} to obtain
  \[
    \begin{aligned}
  \exd\Psi_{43} &=(2\Gamma^2_{~2}-3\Gamma^1_{~1})\Psi_{43}-5J_2\Psi_4\Gamma^1_2+(-3\Psi_2'\Psi_4-\Psi_{442}) \al^1\\
 &   +\Psi_{432}\al^2+\Psi_{433}\bal^1+\Psi_{434}\bal^2,
\end{aligned}\]
where $\Psi_{43}$ is the coframe derivative of $\Psi_4$ with respect to $\bal^1$. Therefore  the principal $\mathbf{T}^2$-bundle $\cF^6\to M$ is given by  adapted null coframes with respect to which one additionally has $\Psi_{43}=0.$ After these reductions  $\Gamma^1_2$ and $\Gamma^2_{~1}$ can be expressed as \eqref{eq:J16-typeIID} for some functions $J_1,\dots,J_6$ on $M.$ We will not express the Bianchi identities for $J_i$'s when the  Petrov type is $III$ or $N.$ It is a matter of straightforward computation to show that  $K_\cB=0$ for real Petrov types $III$ and $N$ implies $\Psi'_2=0$ which is not being considered in this article.  

  \end{proof}
\subsubsection{Petrov type $D$}
\label{sec:petrov-type-d}
PKE metrics of real Petrov type $D$ are particularly interesting because we find   an explicit local normal form for them as shown below. Suppose that  $Weyl^-$ has Petrov type $D$ i.e.  the quartic \eqref{eq:quartic2} has two real roots with double multiplicity.   
Although Proposition \ref{rei} describes such pKE metrics as  a Cartan  geometry of type $(\SOtt,\mathbf{T}^2)$,  here we carry out the reduction procedure further and describe all such pKE metrics in some normal coordinate system. First,  we have the following.
  \begin{proposition}\label{thm:typeD-AllCases-NormalForm}
Given a pKE metric whose  anti-self-dual Weyl curvature has real Petrov type $D$ everywhere, the Cartan connection $\cB$ on $\cF^6\to M$ satisfies the structure equations \eqref{pk} where 
\begin{equation}
  \label{eq:Gamma1221-reduced}
  \Gamma^1_{~2}=-J_3\theta^1+J_4\theta^4,\qquad \Gamma^2_{~1}=J_1\theta^2+J_2\theta^3.
\end{equation}
The EDS obtained from the reduced structure equations together with the differential relations among the functions $J_1,\dots,J_4,\Psi_2,\Psi'_2$ given by
\begin{equation}
  \label{eq:BianchiTypeD-1}
  \begin{aligned}
\der J_1 &= -J_1^2 \theta^1+J_1 J_2 \theta^4+\Gamma^1_{~1} J_1+2J_1J_3 \theta^2-J_{41} \theta^3\\
\der J_2 &= -\Gamma^2_{~2} J_2-J_1 J_2 \theta^1+(-J_{41}+\Psi_2-\Psi'_2) \theta^2+2J_2J_{4} \theta^3+J_2^2 \theta^4\\
\der J_3 &= \Gamma^2_{~2} J_3-2J_1J_3 \theta^1+J_3^2 \theta^2+J_3 J_4 \theta^3+(-J_{41}+\Psi_2-\Psi'_2) \theta^4\\
\der J_4 &= J_3 J_4 \theta^2+J_4^2 \theta^3-\Gamma^1_{~1} J_4+J_{41} \theta^1+2J_2J_{4} \theta^4\\
\der J_{41} &= (-2 J_1 J_{41}-2 J_1 \Psi'_2-J_1 \Psi_2+2J_1J_2 J_{3}) \theta^1+(2 J_3 J_{41}-2J_1J_{3} J_4) \theta^2\\
&\ \  +(-2J_2J_3 J_{4}+2 J_4 J_{41}+2 J_4 \Psi'_2+J_4 \Psi_2) \theta^3+(-2J_1J_2 J_{4}+2 J_2 J_{41}) \theta^4\\
\der \Psi_2 &= -3 J_1 \Psi_2 \theta^1+3 J_2 \Psi_2 \theta^4+3 J_3 \Psi_2 \theta^2+3 J_4 \Psi_2 \theta^3\\
\der\Psi_2'&=0\\
  \end{aligned}
\end{equation}
for some function $J_{41}$ is closed under the exterior derivative operator $\exd$. As a result, the local moduli space of    type $D$ pKE metrics is 5-dimensional. 
\end{proposition}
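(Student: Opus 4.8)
The plan is to push the Cartan reduction of Proposition~\ref{rei} one further step using the full type~$D$ hypothesis, to prolong the resulting structure equations until the exterior differential system closes under~$\der$, and then to read off the moduli count from the closed system.

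First I would fix the normal form of the anti-self-dual Weyl quartic. Type~$D$ means that $W(\lambda)$ in \eqref{eq:quartic2} has two distinct real roots, each of multiplicity two; sending one to $0$ and the other to $\infty$ by the coframe adaptations of Propositions~\ref{typ2} and~\ref{rei} gives
\[\Psi_0=\Psi_1=\Psi_3=\Psi_4=0,\qquad \Psi_2\neq 0.\]
Because $\Psi_4\equiv 0$, the Bianchi identity for $\Psi_4$ in \eqref{pkIIa} forces all of its coframe derivatives $\Psi_{41},\dots,\Psi_{44}$ to vanish; inserting this together with $\Psi_3=0$ into the Bianchi identity for $\Psi_3$ then expresses $\Gamma^1_{~2}$ with no $\al^2$ or $\bal^1$ term, i.e.\ it collapses the coefficients $J_5$ and $J_6$ of \eqref{eq:J16-typeIID} to zero. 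This produces the reduced connection forms \eqref{eq:Gamma1221-reduced}, so that the surviving structure functions are only $J_1,J_2,J_3,J_4$ and $\Psi_2$, together with the constant $\Psi_2'$.

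Next I would compute the differential relations. Differentiating the reduced forms in \eqref{eq:Gamma1221-reduced} and equating the result with the curvature expressions for $\der\Gamma^1_{~2}$ and $\der\Gamma^2_{~1}$ in \eqref{pk} solves for $\der J_1,\dots,\der J_4$ in terms of the coframe, while $\der\Psi_2$ is supplied by the Bianchi identity \eqref{bi}. The only scalar not already among $J_1,\dots,J_4,\Psi_2,\Psi_2'$ that is allowed to appear is a single coframe derivative of $J_4$, which I would name $J_{41}$; part of the bookkeeping here is to check that the a priori distinct free derivatives are forced by these equations to coincide with this one function, so that $J_{41}$ reappears, up to sign, as a coefficient in $\der J_1$ as well. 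This reproduces \eqref{eq:BianchiTypeD-1}, the relation $\der\Psi_2'=0$ recording that the Einstein constant is locally constant.

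The essential step, and the one I expect to be the main obstacle, is to verify that the system \emph{closes}: applying $\der$ once more to each equation of \eqref{eq:BianchiTypeD-1}, in particular to the equation for $\der J_{41}$, must reduce identically to $0=0$ modulo \eqref{pk} and \eqref{eq:BianchiTypeD-1} themselves. Two failures must be excluded: a second coframe derivative $J_{411}$ could be forced to appear, which would break finiteness of type, or an algebraic syzygy among $J_1,\dots,J_4,J_{41},\Psi_2$ could emerge, which would lower the dimension. I would carry out this check by expanding each $\der\der(\cdot)$ in the coframe basis and using the $\mathbf{T}^2$-weights of the $J_i$ recorded in \eqref{pkIIb} as a running consistency test on the coefficients. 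Once closure is established, the moduli count is formal: the closed system makes $\big(\cF^6,(\theta^a,\Gamma^1_{~1},\Gamma^2_{~2})\big)$ a coframing all of whose structure functions $J_1,J_2,J_3,J_4,J_{41},\Psi_2$ have their derivatives prescribed in terms of the coframe and of these same functions, with $\Psi_2'$ a fixed constant. By the realization-and-uniqueness theorem of Cartan's equivalence method (see \cite{Gardner,Olver}), the germ of such a structure is determined by the values of $(J_1,J_2,J_3,J_4,J_{41},\Psi_2,\Psi_2')$ at a single point of $\cF^6$, so the germs of type~$D$ pKE metrics up to local equivalence are in bijection with the $\mathbf{T}^2$-orbits in this $\RR^7$ of initial data. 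Since $\Psi_2$ and $\Psi_2'$ are $\mathbf{T}^2$-invariant while $J_1$ and $J_2$ carry the independent weights $a_{11}^{-1}$ and $a_{22}$ recorded in \eqref{pkIIb}, the generic orbit is two-dimensional, and the local moduli space has dimension $7-2=5$.
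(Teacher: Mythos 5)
Your proposal is correct and follows essentially the same route as the paper: reduction to $J_5=J_6=0$ via the Bianchi identities for $\Psi_3$ and $\Psi_4$ (what the paper calls the ``straightforward computations discussed previously''), prolongation to the closed system \eqref{eq:BianchiTypeD-1} with the single new function $J_{41}$, and a Frobenius/realization argument on the $13$-dimensional bundle with fiber coordinates $(J_1,\dots,J_4,J_{41},\Psi_2,\Psi_2')$. The only cosmetic difference is that you exhibit the generic two-dimensional $\mathbf{T}^2$-orbit through the weights of $(J_1,J_2)$, whereas the paper uses the weights $\tilde J_2=a_{22}J_2$, $\tilde J_4=a_{11}J_4$ to normalize $J_2=J_4=1$; both give the same count $7-2=5$.
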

 \begin{proof}
   The differential identities \eqref{eq:BianchiTypeD-1} are obtained via straightforward computations discussed previously. The fact that the space of such pKE metrics is 5-dimensional follows from the Frobenius theorem applied to the resulting closed EDS. More precisely, define the 13-dimensional bundle $\cE^{13}\to \cF^6$ whose fibers are parametrized by $J=(J_1,J_2,J_3,J_4,J_{41},\Psi_2,\Psi'_2).$ Since the Pfaffian system  \eqref{eq:BianchiTypeD-1} is integrable, its leaf space is 7-dimensional parametrized by $J.$ As the infinitesimal group actions in \eqref{eq:BianchiTypeD-1} suggests, one obtains that the action of the structure group $\HH_{(2)}$ transforms the quantities $J_2$ and $J_4$ by
   \[\tilde J_2=a_{22}J_2,\qquad \tilde J_4=a_{11}J_4.\]
   To find the local generality one considers  generic pKE metrics of type $D.$ As a result, it can be  assumed that  $J_2,J_4\neq 0,$  after restricting to sufficiently small neighborhoods. Hence, setting $a_{11}=\frac{1}{J_4}$ and $a_{22}=\frac{1}{J_2},$ one can normalize  $J_2=J_4=1$ and obtain a canonical coframe at each point of such pKE metrics. Consequently, the remaining 5 parameters of $J$ can be used to distinguish every generic type $D$ pKE metric up to isomorphism. Therefore, the local generality of   type $D$ pKE metrics depends on 5 constants. 
\end{proof}
It turns out that the  pKE metrics of type \emph{D} locally belong to one of  four branches which can be characterized according to  the vanishing of the quantities $J_2$ and $J_4.$ Each branch can be locally expressed using normal coordinates given below.
  \begin{theorem}\label{thm:petrov-types-D-LocNormForm}
    In sufficiently small open sets, every pKE metric \[g=2\theta^1\theta^3+2\theta^2\theta^4,\] whose anti-self-dual Weyl curvature has real Petrov  type $D$,  belongs to one of the following four branches. The first branch is characterized by the property  that $J_2$ and $J_4$ are non-vanishing and is comprised of 5-parameter family of pKE metrics for which a choice of  normalized coframe can be expressed as
        \begin{equation}
      \label{eq:J24-nonvan}
      \begin{aligned}
        \theta^1 &=(y^4)^2\left(1-y^3\right)\exd y^1-y^4\exd y^2 ,\\
        \theta^2& =(y^4)^2\exd y^1-\theta^1,
        \\
         \theta^3 &=\textstyle{\left(-\Psi_2'(y^3)^3+\frac{k_1(y^3)^2}{y^4}-\frac{k_2y^3}{(y^4)^2}-\frac{k_3+\frac 12 k_4}{(y^4)^3}\right)\theta^1-\exd y^3-\frac{y^3}{y^4}\exd y^4}\\
         \theta^4&=\textstyle{\left(\Psi_2'(y^3-1)^3-\frac{k_1(y^3-1)^2}{y^4}+\frac{k_2(y^3-1)}{(y^4)^2}+\frac{k_3}{(y^4)^3}\right)\theta^2+\exd y^3+\frac{y^3-1}{y^4}\exd y^4}
         \end{aligned}
       \end{equation}
for some constants $k_1,\dots,k_4$ and $\Psi'_2.$ 
The second branch is characterized by the condition that $J_4=0$ and $J_2$ non-vanishing and is comprised of a 3-parameter family of pKE metrics for which a choice of  coframe can be expressed as 
\begin{equation}
  \label{eq:J2nonvan-J4van}
  \begin{gathered}
\theta^1= \textstyle{k_2(y^1)^2y^4\exd y^3+\frac{y^1}{y^4}\exd y^4+\exd y^1},\qquad \theta^2=\textstyle{y^1\exd y^3+\frac{1}{y^4}\exd y^2},\qquad \theta^3=\exd y^3\\
\theta^4=\textstyle{-\frac{k_1(y^4)^3+2k_2y^4+2\Psi'_2}{2y^4}\exd y^2-\frac{k_1(y^4)^3+2k_2y^4+2\Psi'_2}{2}y^1\exd y^3+\frac{1}{y^4}\exd y^4} 
\end{gathered}
\end{equation}
where $k_1,k_2$ and $\Psi_2'$ are constants.
Similarly, the third branch is characterized by $J_2=0$ and $J_4$ non-vanishing, which is comprised of 3-parameter family of pKE metrics and can be expressed as in \eqref{eq:J2nonvan-J4van} after switching $\theta^1\leftrightarrow \theta^2$ and  $\theta^3\leftrightarrow \theta^4.$ 

Lastly, the fourth branch, characterized by $J_2=J_4=0,$ is comprised of  the only homogeneous pKE metrics of type $D.$  They form a  1-parameter family parametrized by $\Psi'_2,$ for which a choice of coframe is given by   
\begin{equation}
  \label{eq:hom-typeD}
\begin{aligned}
    \theta^1&=\textstyle{\frac{\der y^1}{1-\tfrac32\Psi_2' y^1y^3}},\qquad && \theta^2=\textstyle{\frac{\der y^2}{1-\tfrac32\Psi_2' y^2 y^4}},\qquad
  \theta^3&=\textstyle{\frac{\der y^3}{1-\tfrac32\Psi_2' y^1y^3}},\qquad &&  \theta^4=\textstyle{\frac{\der y^4}{1-\tfrac32\Psi_2' y^2 y^4}},
\end{aligned}
\end{equation}
  \end{theorem}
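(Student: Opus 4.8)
The plan is to take as given the reduced Cartan geometry of type $(\SOtt,\mathbf{T}^2)$ supplied by Proposition \ref{thm:typeD-AllCases-NormalForm}, to exhaust the residual torus freedom by normalization, and then to integrate the resulting closed system branch by branch. The organizing observation is that, by the transformation law recorded in the proof of Proposition \ref{thm:typeD-AllCases-NormalForm}, the coefficients in \eqref{eq:Gamma1221-reduced} satisfy $\tilde J_2=a_{22}J_2$ and $\tilde J_4=a_{11}J_4$ under the structure group $\HH_{(2)}$ of \eqref{eq:typeII-red-str-gp-2}. Thus $J_2$ and $J_4$ are relative invariants of nonzero weight, so each of the conditions $J_2\equiv 0$ and $J_4\equiv 0$ is independent of the chosen adapted coframe; on a connected open set this partitions the type $D$ structures into exactly the four mutually exclusive branches of the statement.

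First I would treat the generic branch $J_2,J_4\neq 0$. Choosing $a_{11}=1/J_4$ and $a_{22}=1/J_2$ reduces the structure group to the identity, so the normalization $J_2=J_4=1$ selects a canonical coframe $(\theta^1,\theta^2,\theta^3,\theta^4)$ on $M$ and expresses the connection forms $\Gamma^1_{~1},\Gamma^2_{~2}$ through it. With this normalization the relations \eqref{eq:BianchiTypeD-1} collapse to a first order system for the five surviving functions $(J_1,J_3,J_{41},\Psi_2,\Psi'_2)$, and Proposition \ref{thm:typeD-AllCases-NormalForm} guarantees that the full system \eqref{pk}, \eqref{eq:BianchiTypeD-1} is closed under $\exd$. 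The remaining task is to exhibit explicit Frobenius coordinates: I would single out the combinations of the canonical coframe that are closed, integrate them to coordinates $y^1,\dots,y^4$, and back-substitute. The target \eqref{eq:J24-nonvan} guides the bookkeeping: the $\mp\exd y^3$ and $\exd y^4/y^4$ pieces of $\theta^3,\theta^4$ isolate $y^3,y^4$ first, after which $\theta^1,\theta^2$ recover $y^1,y^2$; the cubic dependence on $y^3$ and the rational dependence on $y^4$ are exactly what integration of the $J_1,J_3,\Psi_2$ equations along these directions yields, and the four constants of integration become the moduli $k_1,\dots,k_4$, with $\Psi'_2$ the fifth.

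The other branches demand less. For $J_4=0,\ J_2\neq 0$ I would first normalize $J_2=1$; the $\der J_4$ identity in \eqref{eq:BianchiTypeD-1} then forces $J_{41}=0$ and the $\der J_2$ identity expresses $\Gamma^2_{~2}$ through the coframe, so that after exhausting the residual torus and integrating the reduced system one reaches \eqref{eq:J2nonvan-J4van} with its three constants $k_1,k_2,\Psi'_2$. The branch $J_2=0,\ J_4\neq 0$ then follows verbatim under the involution $\theta^1\leftrightarrow\theta^2,\ \theta^3\leftrightarrow\theta^4$, which is an isometry of $g$ exchanging the two distinguished null planes and hence the two branches, so it need not be integrated afresh. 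In the last branch $J_2=J_4=0$, substituting into \eqref{eq:BianchiTypeD-1} forces in turn $J_{41}=0$, then $\Psi_2=\Psi'_2$, and then, from $\der\Psi_2=0$ with $\Psi'_2\neq 0$, also $J_1=J_3=0$; together with the normalization $\Psi_0=\Psi_1=\Psi_3=\Psi_4=0$ this is precisely the flatness $K_\cB=0$ of Proposition \ref{rei}, so the structure is locally homogeneous and its coframe integrates to \eqref{eq:hom-typeD}, recovering the potential $V_2$ of \ref{sec:simple-examples}.

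The main obstacle is the explicit integration of the generic branch: arranging the coordinates so that the closed system \eqref{pk}, \eqref{eq:BianchiTypeD-1} separates into successively solvable ordinary differential equations, and then verifying directly that \eqref{eq:J24-nonvan} reproduces \eqref{pk} with $J_2=J_4=1$ and the prescribed $(J_1,J_3,J_{41},\Psi_2)$. This is long but purely mechanical; the economical route is to integrate the generic branch once and let the involution together with the degeneration $J_2,J_4\to 0$ dispatch the remaining three cases.
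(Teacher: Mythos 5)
Your proposal is correct in outline and shares the paper's skeleton --- branching by the relative invariants $J_2,J_4$, normalizing $J_2=J_4=1$ (resp.\ $J_2=1$), and integrating the closed system \eqref{eq:BianchiTypeD-1} --- but it diverges from the paper in two devices, and the comparison is worth recording. In branches 1 and 2 the paper does not hunt for closed combinations of the coframe directly: it computes the Killing algebra explicitly, and the normal coordinates are built from components of Killing fields ($y^3=v^2/(v^1+v^2)$, $y^4=\sqrt{v^1+v^2}$ in branch 1 via \eqref{eq:vis}, and $y^1=v^2$, $y^5=v^3$ in branch 2, with $v^3$ absorbing the residual $a_{11}$-freedom). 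This buys two things your route does not automatically provide: the closed 1-form $J_1\theta^1-J_3\theta^2-\theta^3-\theta^4=\tfrac{\exd(v^1+v^2)}{2(v^1+v^2)}$ is found systematically rather than by target-guided search, and the isometry structure (2-dimensional isometry group in branch 1, cohomogeneity one in branch 2) comes out as a by-product that organizes the integration. Conversely, your treatment of the fourth branch is a genuine and arguably cleaner shortcut: your chain $J_2=J_4=0\Rightarrow J_{41}=\Psi_2-\Psi_2'$ (from $\der J_2=0$), $J_{41}=0$ (from $\der J_4=0$), then $J_1=J_3=0$ from $\der\Psi_2=0$ with $\Psi_2'\neq 0$ is exactly right, and identifying the result with the flatness $K_\cB=0$ of Proposition \ref{rei} replaces the paper's hands-on integration of \eqref{pKE-D-hom} via Darboux and Liouville's equation \eqref{eq:LiouvillesEq-1}; one then only needs the direct check that \eqref{eq:hom-typeD} solves the flat structure equations, local uniqueness being automatic for a flat Cartan geometry. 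Your involution argument for branch 3 is also sound: the switch is the structure-group element with $A=\left(\begin{smallmatrix}0&1\\1&0\end{smallmatrix}\right)$, which exchanges $(J_2,J_4)$ and preserves the type-$D$ adaptation, making explicit what the paper dismisses with ``can be treated similarly.''

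There is one concrete gap in your second branch. From $\der J_4=0$ with $J_4\equiv 0$ you correctly get $J_{41}=0$, but you then let the $\der J_2$ identity define $\Gamma^2_{~2}$, which at that stage reads $\Gamma^2_{~2}=-J_1\theta^1+(\Psi_2-\Psi_2')\theta^2+\theta^4$, \emph{not} the paper's \eqref{eq:Gamma22-reduced}: the term $-J_1\theta^1$ survives unless one proves $J_1\equiv 0$, and the 3-parameter form \eqref{eq:J2nonvan-J4van} does not accommodate a nonzero $J_1$. The vanishing of $J_1$ is not free: the $\theta^1$-component of $\der J_{41}=0$ gives only $J_1(2J_3-2\Psi_2'-\Psi_2)=0$ (after normalizing $J_2=1$), so on an open set where $J_1\neq 0$ one has $2J_3=2\Psi_2'+\Psi_2$; differentiating this relation with \eqref{eq:BianchiTypeD-1} and comparing $\theta^1$-coefficients yields $-6J_1\Psi_2'=0$, a contradiction since $\Psi_2'\neq 0$. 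This one-line differentiation argument (which the paper also leaves implicit behind ``one obtains $J_{41}=J_1=0$'') must be inserted before your reduction of $\Gamma^2_{~2}$; with it, the rest of your plan goes through.
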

  \begin{proof}
Let us first work in a  neighborhood $U$ in which $J_2,J_4$ are nowhere vanishing. Using the action of $a_{11}$ and $a_{22},$ whose infinitesimal version is given in \eqref{eq:BianchiTypeD-1},  there is a unique coframe with respect to which $J_2=J_4=1$ and therefore
\begin{equation}
  \label{eq:Gamma1122-reduced}
  \Gamma^1_{~1}=J_{41} \theta^1+J_3 \theta^2+\theta^3+2 \theta^4,\qquad \Gamma^2_{~2}=-J_1 \theta^1+(-J_{41}+\Psi_2-\Psi'_2) \theta^2+2 \theta^3+\theta^4.
\end{equation}
To find our normal coordinate system we make use of the orbits of the Killing vector fields of these pKE metrics. Let $v=v^i\frac{\partial}{\partial\theta^i}$ be a Killing vector field i.e. $\sL_v\theta^i=0$ where $\sL$ denotes the Lie derivative. It is straightforward to use the structure equations \eqref{pk} and reductions \eqref{eq:Gamma1221-reduced} and \eqref{eq:Gamma1122-reduced} when $J_2=J_4=1$ to obtain $\exd v^i$ for $i=1,\dots,4$. Subsequently,  the identities $\exd^2 v^i=0$ imply that the   isometry group for such pKE metrics is two dimensional since
\[v=\textstyle{v^1(\frac{\partial}{\partial\theta^1}+J_1\frac{\partial}{\partial\theta^3})+v^2(\frac{\partial}{\partial\theta^2}-J_3\frac{\partial}{\partial\theta^4}),}\]
where 
\begin{equation}
  \label{eq:vis}
  \exd v^1=v^1(J_1\theta^1-\theta^3)-(2v^1+v^2)(J_3\theta^2+\theta^4),\quad \exd v^2=v^1(\theta^1+J_1\theta^3)+v^2(\theta^2-J_3\theta^4).
\end{equation}
Because of the relation $\textstyle{\frac{\exd(v^1+v^2)}{2(v^1+v^2)}=J_1\theta^1-J_3\theta^2-\theta^3-\theta^4},$ we restrict ourselves to an open set where $\exd(v^1+v^2)$ and $v^1+v^2$ are non-zero. As a result, one can assume $v^1+v^2>0$ and   define 
\[\textstyle{y^3=\frac{v^2}{{v^1+v^2}}},\qquad y^4=\sqrt{v^1+v^2}.\] 
From \eqref{eq:vis} it follows that 
\begin{equation}
  \label{eq:theta34}
\theta^3=\textstyle{J_1\theta^1-\exd y^3-\frac{y^3}{y^4}\exd y^4},\qquad \theta^4=\textstyle{-J_3\theta^2+\exd y^3+\frac{y^3-1}{y^4}\exd y^4}
\end{equation}
Using the reduced 1-forms  \eqref{eq:Gamma1122-reduced} and normalized values $J_2=J_4=1$ in the Bianchi identities \eqref{eq:BianchiTypeD-1} together with the expressions \eqref{eq:theta34}, it follows that $J_1,J_3,J_{41}$ and $\Psi_2$ are functions of $y^3$ and $y^4.$ For instance, one obtains $\exd\Psi_2=-\frac{3\Psi_2}{y^4}\exd y^4$ which implies $\Psi_2=\frac{k_4}{(y^4)^3}$ for a constant $k_4.$ Similarly, elementary calculations can be carried out to show
\begin{equation}
  \label{eq:invariants-generic}
  \begin{aligned}
    J_1&=-\Psi_2'(y^3)^3+\textstyle{\frac{k_1}{y^4}(y^3)^2-\frac{k_2}{(y^4)^2}y^3-\frac{2k_3+k_4}{2(y^4)^3}},\\ J_3&=\textstyle{-\Psi'_2(y^3)^3+(3\Psi'_2+\frac{k_1}{y^4})(y^3)^2-(3\Psi'_2+\frac{2k_1}{y^4}+\frac{k_2}{(y^4)^2})y^3 +\Psi'_2+\frac{k_1}{y^4}+\frac{k_2}{(y^4)^2} -\frac{k_3}{(y^4)^3}},\\
  J_{41}&=\textstyle{2\Psi'_2(y^3)^3-(3\Psi'_2+\frac{2k_1}{y^4})(y^3)^2+(\frac{2k_1}{y^4}+\frac{2k_2}{(y^4)^2})y^3-\frac{k_2}{(y^4)^2} +\frac{2k_3+k_4}{(y^4)^3}},\\
  \end{aligned}
\end{equation}
for constants $k_1,k_2,k_3$ and $k_4.$ 
To express $\theta^1$ and $\theta^2$ in a local normal form, one makes use of the structure equations \eqref{pk} and the reductions \eqref{eq:Gamma1221-reduced} and \eqref{eq:Gamma1122-reduced} when $J_2=J_4=1$ to obtain
\[
\begin{aligned}
  \exd(\theta^1+\theta^2)&=\textstyle{\frac{2}{y^4}\exd y^4\w(\theta^1+\theta^2)}.
 \end{aligned}
\]
By  Darboux' theorem, locally one obtains  $\theta^1=(y^4)^2\exd y^1-\theta^2$ for a local coordinate $y^1.$  Lastly, the relation
\[\exd\theta^2=\textstyle{\frac{1}{y^4}\exd y^4\w\theta^2+y^4\left(y^3\exd y^4+y^4\exd y^4\right)\w\exd y^1}\]
implies that   $\theta^2=(y^4)^2y^3\exd y^1+y^4\exd y^2$ for a local coordinate $y^2.$ This proves the local normal form presented in  \eqref{eq:J24-nonvan}. Furthermore, it is straightforward to verify that  $\frac{\partial}{\partial y^1}$ and $\frac{\partial}{\partial y^2}$ are the Killing vector fields for these pKE metrics.

For the second branch  we restrict ourselves to open sets $U\subseteq M$ in which $J_4=0$ and $J_2$ is nowhere vanishing. Over $U$ one obtains $J_{41}=J_1=0.$ Using the relations \eqref{eq:BianchiTypeD-1}, consider the set of coframes with respect to  which $J_2=1$ and 
\begin{equation}
  \label{eq:Gamma22-reduced}
  \Gamma^2_{~2}=(\Psi_2-\Psi'_2)\theta^2+\theta^4.
\end{equation}
As described before, it is straightforward to express the  Killing vector fields of such pKE metrics which are of the form  
\begin{equation}
  \label{eq:KillingJ4van}
  v=\textstyle{v^1\frac{\partial}{\partial\theta^1}+v^2\frac{\partial}{\partial\theta^2} +v^3\frac{\partial}{\partial\theta^1}+J_3v^2\frac{\partial}{\partial\theta^4} +v^5\frac{\partial}{\partial\Gamma^1_{~1}}}
\end{equation}
 and, as a result, have to satisfy the differential relations
\begin{equation}
  \label{eq:vis-J4van}
  \begin{aligned}
    \exd v^1&=-v^1\Gamma^1_{~1}+(J_3v^2+v^5)\theta^1-v^1J_3\theta^2,\\
    \exd v^2&=-J_3v^2\theta^2-v^1\theta^3-v^2\theta^4+v^3\theta^1,\\
    \exd v^3&=v^3\Gamma^1_{~1}-(J_3v^2+v^5)\theta^3-v^3\theta^4,\\
    \exd v^5&=(J_3-2\Psi'_2-\Psi_2)(v^3\theta^1-v^1\theta^3)+v^2(\Psi'_2-\Psi_2)(J_3\theta^2+\theta^4).
  \end{aligned}
\end{equation}
Using the structure equations \eqref{pk} and the reduced 1-forms \eqref{eq:Gamma1221-reduced} and \eqref{eq:Gamma22-reduced} when $J_4=0$ and $J_2=1,$ together with the Bianchi identities \eqref{eq:BianchiTypeD-1}, it follows that 
\begin{equation}
  \label{eq:theta4-J4van}
  \exd(J_3\theta^2+\theta^4)=0\Rightarrow \theta^4=\textstyle{\frac{1}{y^4}\exd y^4}-J_3\theta^2
\end{equation}
for a local coordinate $y^4.$ It follows from  \eqref{eq:KillingJ4van} that the orbits of the isometry group for such pKE metrics are level sets of $y^4.$  This implies that such pKE metrics have  cohomogeneity one. Using the Bianchi identities \eqref{eq:BianchiTypeD-1} and the reduction \eqref{eq:Gamma22-reduced}, one obtains that $J_3$ and $\Psi_2$ are given by 
\[\Psi_2=k_1(y^4)^3,\qquad J_3=\textstyle{\frac{1}{2}k_1(y^4)^3+k_2y^4+\Psi'_2.}\] 
It remains to express $\theta^1,\theta^2,\theta^3$ in a local normal form which will be done using  \eqref{eq:vis-J4van}. Restricting to open sets where $v^3\neq 0,$ the relations \eqref{eq:vis-J4van} imply that 
\begin{equation}
  \label{eq:theta1Gamma11-J4van}
  \begin{aligned}
  \theta^1 &= \textstyle{\frac{v^1}{v^3}\theta^3+\frac{v^2}{v^3y^4}\exd y^4+\frac{1}{v^3}\exd v^2}\\
 \Gamma^1_1 &= \textstyle{-(\Psi'_2+\frac{(y^4)^3}{2k_1}+k_2y^4)\theta^2+\frac{v^2(\frac 12k_1(y^4)^3+k_2y^+\Psi'_2)+v^5}{v^3}\theta^3+\frac{\exd y^4}{y^4}+\frac{\exd v^3}{v^3}}
\end{aligned}
\end{equation}
Using \eqref{eq:theta4-J4van}, \eqref{eq:theta1Gamma11-J4van} in \eqref{eq:vis-J4van}, it is a matter of elementary calculation to show 
\[v^1=\textstyle{\frac{k_2y^4(v^2)^2}{v^3}},\qquad v^5=\textstyle{-\frac{k_1(y^4)^3-2k_2y^4+2\Psi'_2}{2}v^2}.\]
Lastly, the reduced structure equations imply that 
\[\exd\theta^2=-\textstyle{\theta^3\w(\frac{d v^2}{v^3}-\frac{v^2\exd y^4}{v^3y^4})+\theta^2\w\frac{\exd y^4}{y^4}},\qquad \exd\theta^3=-\theta^3\w\textstyle{\frac{\exd v^3}{v^3}}.\]
Using Darboux' theorem, one can find local coordinate system with respect to which
\[\theta^2=\textstyle{\frac{\exd y^2}{y^4}}+v^2\exd y^3,\qquad \theta^3=v^3\exd y^3.\]
As a result, we have a local coordinate system $(y^1,\dots,y^5),$ where $y^1=v^2$ and $y^5=v^3$ with respect to which we have expressed $\theta^i$'s and $\Gamma^1_{~1}$. It is clear that $v^3$ acts by scaling on $\theta^1$ and $\theta^3$ and therefore, corresponds to the element $a_{11}$ in the  reduced structure group \eqref{eq:typeII-red-str-gp-2}. The coframe \eqref{eq:J2nonvan-J4van} is obtained by setting $v^3=1.$ Finally, note that in terms of the local coordinates $(y^1,\dots,y^4)$, the trajectories of the Killing vector fields are given by the level sets of $y^2$ and $y^3$ and $(y^1)^2-(y^3)^2.$   The third branch characterized by  $J_2=0$ and $J_4$  nowhere vanishing can be treated similarly. 
 
    Finally, when $J_2=J_4=0,$ straightforward computation shows that all $J_i$'s vanish and $\Psi_2=\Psi'_2.$ As a result, such metrics are homogeneous. The structure equations are given by
\begin{subequations}\label{pKE-D-hom}
  \begin{align}
    \exd\theta^1=-\Gamma^1_{~1}\dz\theta^1,\qquad \exd\theta^3=\Gamma^1_{~1}\dz\theta^3,\qquad \exd\Gamma^1_{~1}=-3\Psi_2'\theta^1\dz\theta^3,  \label{pKE-D-hom1}\\
  \exd\theta^2=-\Gamma^2_{~2}\dz\theta^2,\qquad \exd\theta^4=\Gamma^2_{~2}\dz\theta^4,\qquad \exd\Gamma^2_{~2}=-3\Psi_2'\theta^2\dz\theta^4.  \label{pKE-D-hom2}
  \end{align}
\end{subequations}
Hence, it is sufficient to find a normal form for \eqref{pKE-D-hom1}. Using Darboux's theorem, \eqref{pKE-D-hom1}  imply that there are coordinates $y^1,y^3,y^5$ with respect to which
\[\theta^1=e^{-y^5+F_1}\exd y^1,\qquad \theta^3=e^{y^5-F_1+F_2}\exd y^3,\qquad \Gamma^1_{~1}=\exd y^5-\textstyle{\frac{\partial(F_1-F_2)}{\partial y^1}\exd y^1-\frac{\partial F_1}{\partial y^3}\exd y^3}\]
for  arbitrary functions $F_1=F_1(y^1,y^3)$ and  $F_2=F_2(y^1,y^3)$ which satisfy
\begin{equation} 
  \label{eq:LiouvillesEq-1}
  \textstyle{\frac{\partial^2}{\partial y^1\partial y^3}F_2-3\Psi'_2e^{F_2}=0}.
\end{equation}
The equation above is known as  Liouville's equation \cite{Henrici} whose solutions can be expressed as
\begin{equation}
  \label{eq:LiouvillesSol}
F_2(y^1,y^3)=\ln\left(\frac{2p'q'}{-3\Psi'_2(p-q)^2}\right)  
\end{equation}
for two arbitrary functions $p=p(y^1)$ and $q=q(y^3).$  Setting  
\[\textstyle{p(y^1)=6\Psi_2'y^1,\qquad q(y^3)=\frac{4}{y^3},\qquad F_1(y^1,y^3)=\textstyle{\ln\left(\frac{1}{1-\frac 32\Psi'_2y^1y^3}\right)},\qquad y^5=0 }\]
   one obtains the expression \eqref{eq:hom-typeD} for $\theta^1$ and $\theta^3.$ The expressions of $\theta^2$ and $\theta^4$ are obtained similarly.

  \end{proof} 
  It is straightforward to use the coframe \eqref{eq:hom-typeD} in order to arrive at the potential function $V_2$ in \ref{sec:simple-examples}.
  Moreover, one can characterize the branchings in Theorem \ref{thm:petrov-types-D-LocNormForm} in terms of  the vanishing of $J_1$ and $J_3.$ 
\begin{remark} 
 Theorem \ref{thm:petrov-types-D-LocNormForm} is yet another instance of explicit local normal form for certain classes of (pseudo-)Riemannian metrics whose Weyl curvature has algebraic type $D,$ which includes  the  Pleba\'nski-Demia\'nski  metrics \cite{Kinnersley, Debever,PD} in the Lorentzian signature (see \cite{GP-survey,Kamran} for a survey of  all the results), and ambitoric metrics in Riemannian signature \cite{ACG}.      
\end{remark}  

\subsubsection{Petrov type $II$}
\label{sec:petrov-type-II}
Now we proceed  to pKE metrics whose anti-self-dual Weyl curvature has Petrov type $II$. 
\begin{theorem}\label{thm:typeII-exa-1}
  Given a pKE metric of Petrov type $II$, the  $\SOtt$-valued Cartan connection $\cB$ on the principal bundle $\bbS^1\times\bbS^1\to \cF^6\to M$, as defined in Proposition \ref{rei}, 
satisfies the Yang-Mills equations $D*K_\cB=0$, where $K_\cB=\der \cB+\cB\dz \cB$, if and only if  
\begin{equation}
  \label{eq:YM-typeII-1}
  \Psi_2=\Psi_2',\quad\mathrm{and}\quad J_1=J_2=J_3=J_4=0.
\end{equation}
in \eqref{eq:J16-typeIID}. Examples of such pKE structure are given by  $g=2\theta^1\theta^3+2\theta^2\theta^4$ where
\begin{equation}
\label{exII}\begin{aligned}
  \theta^1=&\der x-\tfrac32\Psi_2'\big(x^2+xf_1(a)+f_2(b)\big)\der a\\
    \theta^2=&\der b\\
    \theta^3=&\der a\\
    \theta^4=&\der y-\tfrac32\Psi_2'\big(y^2+yf_3(b)+f_4(a)\big)\der b
\end{aligned}
\end{equation}
for some arbitrary functions $f_1,\dots,f_4$ where $f'_2,f'_4$ are  nowhere vanishing.
\end{theorem}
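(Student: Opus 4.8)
The statement has two parts: the Yang--Mills criterion \eqref{eq:YM-typeII-1}, and the construction of the family \eqref{exII}. The criterion is best handled through the self-dual/anti-self-dual splitting of the curvature. I would first compute $K_\cB=\der\cB+\cB\w\cB$ for a general type $II$ reduced coframe, that is, using \eqref{pk} with the normalization $\Psi_0=\Psi_1=\Psi_3=0$ of \eqref{eq:tyepII-reduction-2} and the reduced forms $\Gamma^2_{~1}=J_1\al^2+J_2\bal^1$, $\Gamma^1_{~2}=-J_3\al^1+J_6\al^2+J_5\bal^1+J_4\bal^2$ from \eqref{eq:J16-typeIID}. Since $\cB=\cB_1\oplus\cB_2$ is block diagonal, each $\soa_{1,1}$ block is computed separately, the only ingredients being $\der\al^i,\der\bal^i$ and $\der\Gamma^1_{~1},\der\Gamma^2_{~2}$. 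Expanding in the basis \eqref{asd-sd-null}, one checks that the scaling $c=\sqrt{\tfrac32|\Psi_2'|}$ in \eqref{cacoB} is chosen precisely so that the constant $\sigma^3_+$ contribution on the diagonal cancels (consistent with the flatness criterion of Proposition \ref{rei}); consequently the self-dual part $K_\cB^+$ is a combination of $\sigma^1_+,\sigma^2_+,\sigma^3_+$ whose coefficients are linear (plus quadratic) in $J_1,J_2,J_3,J_4$ alone, while $K_\cB^-$ absorbs the diagonal $\tfrac12(\Psi_2-\Psi_2')\,\sigma^3_-$ terms together with the $J_5,J_6$ contributions.

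Because $K_\cB$ is horizontal and the structure group $\mathbf{T}^2\subset\SOtt$ preserves the decomposition $\Lambda^2=\Lambda^2_+\oplus\Lambda^2_-$, the Hodge star is well defined on $K_\cB$ exactly as in the proof of Proposition \ref{so222}, giving $*K_\cB=K_\cB^+-K_\cB^-$. Combined with the Bianchi identity $DK_\cB=0$ of \eqref{eq:BianchiIden} this yields $D*K_\cB=2\,DK_\cB^+$, so the Yang--Mills equation is equivalent to $DK_\cB^+=0$. For the implication $(\Leftarrow)$ this makes everything transparent: substituting $J_1=J_2=J_3=J_4=0$ kills every self-dual coefficient, so $K_\cB^+=0$ and $D*K_\cB=-DK_\cB=0$ at once; moreover the Bianchi identities \eqref{pkIIa} for $J_1$ and $J_2$ then degenerate to $\Psi_2-\Psi_2'=0$, which is why $\Psi_2=\Psi_2'$ appears in \eqref{eq:YM-typeII-1}.

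The implication $(\Rightarrow)$ is the main obstacle. One must expand $DK_\cB^+=\der K_\cB^+ +[\cB,K_\cB^+]$ as an $\sott$-valued $3$-form, eliminating $\der J_i$ through the type $II$ Bianchi identities \eqref{pkIIa} and $\der\sigma^i_\pm$ through the structure equations \eqref{pk}. Collecting the coframe $3$-form components produces a linear system whose genuinely \emph{algebraic} part, exploiting $\Psi_2\neq0$ together with the genuine type $II$ hypothesis $\Psi_4\neq0$, forces $J_1=J_2=J_3=J_4=0$ and hence $\Psi_2=\Psi_2'$. The delicate point is to separate these purely algebraic relations from those already entailed by \eqref{pkIIa}, so that one does not over- or under-count the constraints; note in particular that $K_\cB^-$ need not vanish, since it retains a $\sigma^2_-$ term proportional to $J_5$.

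Finally, for the family \eqref{exII} I would verify \eqref{eq:YM-typeII-1} by direct computation. Differentiating $\theta^a$ and solving \eqref{pk} gives $\Gamma^2_{~1}=0$, $\Gamma^1_{~1}=-\tfrac32\Psi_2'(2x+f_1)\,\bal^1$, $\Gamma^2_{~2}=\tfrac32\Psi_2'(2y+f_3)\,\al^2$ and $\Gamma^1_{~2}=\tfrac32\Psi_2'f_4'\,\al^2-\tfrac32\Psi_2'f_2'\,\bal^1$, so that $\Gamma^1_{~4}=\Gamma^4_{~1}=0$ (the structure is pKE) and $J_1=J_2=J_3=J_4=0$. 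Matching the second structure equation for $\der\Gamma^1_{~1}$ then yields $\Psi_0=\Psi_1=\Psi_3=0$ and $\Psi_2=\Psi_2'$, so the Yang--Mills conditions hold. Reading $\Psi_4$ off the $\al^2\w\bal^1$ coefficient of $\der\Gamma^1_{~2}$ shows it is affine in $(x,y)$ with slopes proportional to $f_4'$ and $f_2'$; the hypothesis that $f_2',f_4'$ are nowhere vanishing therefore guarantees $\Psi_4\not\equiv0$, so the Petrov type is genuinely $II$ rather than $D$, and the criterion established above shows that these metrics satisfy $D*K_\cB=0$.
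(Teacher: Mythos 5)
Your proposal is correct and takes essentially the same route as the paper: you reconstruct the curvature expression \eqref{eq:typeII-curv-YM-1} (including the cancellation of the constant $\sigma^3_+$ term effected by the normalization $\sqrt{\tfrac32|\Psi_2'|}$ in \eqref{cacoB}), reduce $D*K_\cB=0$ to $DK_\cB^+=0$ via horizontality and the Bianchi identity exactly as in Proposition \ref{so222}, correctly observe that $\Psi_2=\Psi_2'$ in \eqref{eq:YM-typeII-1} follows from the identities \eqref{eq:Bianchies-TypeIID-1} once $J_1=\dots=J_4=0$, and verify the family \eqref{exII} by the same direct computation of $\Gamma^1_{~1},\Gamma^2_{~2},\Gamma^1_{~2}$ and the curvature components (your $\Psi_4$ matches the paper's formula) that the paper records. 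The one step you leave as a plan --- expanding $DK_\cB^+$ to force $J_1=J_2=J_3=J_4=0$ --- is precisely the computation the paper also suppresses behind ``one immediately obtains,'' though your guess that the genuine type $II$ hypothesis $\Psi_4\neq 0$ is exploited there is likely unnecessary, since the forcing should already follow from $\Psi_2,\Psi_2'\neq 0$ (in the type $D$ subcase, where $\Psi_4=0$, the same equations force the flat homogeneous model).
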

\begin{proof}
Using the expression of $K_\cB$ in \eqref{eq:typeII-curv-YM-1}, one immediately obtains  \eqref{eq:YM-typeII-1} (see Remark \ref{rmk:YM-AllTypes}).   
  One can integrate the structure equations for   pKE metrics of Petrov type $II$ that are Yang-Mills assuming some simplifying conditions. The integration procedure can be carried out similarly to what was explained in Theorem \ref{thm:typeD-AllCases-NormalForm} and will not be explained here. It turns out  that pKE metrics arising from the coframe \eqref{exII} satisfy 
\[\begin{aligned}
    \Psi_2&=\Psi_2'=\mathrm{const},\qquad     \Psi_0=\Psi_1=\Psi_3=0,\\ \Psi_4&=-\tfrac34\Big(3\left(f_3f'_{2}+f_1f_{4}'+2xf_{4}'+2yf_{2}'\right)\Psi'_2-2f_{4}''-2f_{2}''\Big)\Psi_2'\\
J_5&=\textstyle{-\frac{3}{2}\Psi_2'f'_2},\qquad J_6=\textstyle{\frac{3}{2}\Psi_2'f'_4}  
\end{aligned}\]
Note that if   $f'_2=f'_4=0$ one obtains homogeneous pKE metrics of type $D$. 

 The $\SOtt$-valued Cartan connection $\cB$, as defined in (\ref{cacoB}), has curvature
\[K_\cB=\bma[c|c]
\begin{matrix}0&\tfrac32\sqrt{\tfrac32}|\Psi_2'|{}^{3/2}~f'_{2}\\
0&0\end{matrix}&0\\
\cmidrule(lr){1-2}
0&\begin{matrix}0&0\\
  -\tfrac32\sqrt{\tfrac32}|\Psi_2'|{}^{3/2}~f'_{4}&0\end{matrix}\ema\sigma^2_-,\]
  which is anti-self-dual, and therefore satisfies the Yang-Mills equations $D*K_\cB=0$. 
\end{proof} 
\begin{remark}\label{rmk:YM-AllTypes}
Note that if  the conditions \eqref{eq:YM-typeII-1} for $J_1,\dots,J_6$ in ~\eqref{eq:J16-typeIID}, which arise from the Yang-Mills equation, are replaced by $J_5=J_6=0,$ then the $Weyl^-$ of the pKE metric has type $D$  as discussed in Theorem \ref{thm:typeD-AllCases-NormalForm}. Furthermore, as shown in Proposition \ref{rei},  one can always associate a Cartan geometry of type $(\SOtt,\mathbf{T}^2)$, with a canonical Cartan connection,  to pKE metrics of any  Petrov type by appropriately reducing the structure group. However, except for type $II$, it can be shown that the set of Yang-Mills solutions among other types is empty.    
\end{remark}
\subsubsection{Petrov type $III$}\label{sec:reduction-type-iii} 
 Assume that the quartic $W(\lambda)$ in \eqref{eq:quartic2} has a repeated root of multiplicity three. As we did in \ref{sec:petrov-type-d}, by coframe adaptation one can translate the multiple root to zero, which is equivalent to finding an adapted coframe with respect to which 
\[\Psi_0=\Psi_1=\Psi_2=0,\quad\mathrm{and}\quad \Psi_3\neq 0.\]
In this case  Proposition \ref{typ2}  still remains valid and the following differential relations hold
\begin{equation}
  \label{eq:TypeIII-InfintAct}
  \begin{aligned}
  \der\Psi_3&=-\Gamma^1_{~1}\Psi_3+\Gamma^2_{~2}\Psi_3-2J_1\Psi_3\theta^1+(J_1\Psi_4+\Psi_{41})\theta^2+(J_2\Psi_4-\Psi_{44})\theta^3 +2J_2\Psi_3\theta^4 \\
  \der\Psi_4 &= -2\Gamma^1_{~1}\Psi_4+4\Gamma^1_{~2}\Psi_3+2\Gamma^2_{~2}\Psi_4+\Psi_{41}\theta^1 +\Psi_{42}\theta^2+\Psi_{43}\theta^3+\Psi_{44}\theta^4\\
  \der J_2 &=-J_1J_2\theta^1+J_2^2\theta^4-\Gamma^2_{~2}J_2+J_{22}\theta^2+J_{23}\theta^3
\end{aligned}
\end{equation}
for some functions $J_{22},J_{23}.$ 

Using the action of $a_{11}$ and $a_{22}$, one can find the set of adapted coframes with respect to which the quantities  $\Psi_3$ and $J_2$ are normalized  to constants. The set of such adapted coframes give rise to a  line bundle $\cF^5\to M$ with the group parameter  $a_{12}$ in \eqref{eq:typeII-red-str-gp}  as the fiber coordinate.

\begin{proposition}\label{typeiii}
Given a pKE metric whose anti-self-dual Weyl curvature has Petrov type III and for which $J_2\neq 0$, the bundle of adapted coframes preserving 
\[\Psi_0=\Psi_1=\Psi_2=0,\qquad \Psi_3=\mathrm{const}\neq 0,\qquad J_2=\mathrm{const}\neq 0\]
is a line bundle $\cF^5\to M,$ whose sections satisfy the structure equations \eqref{pk}  wherein
\[\Gamma^2_{~1}=J_1\theta^2+J_2\theta^3,\qquad \Gamma^1_{~1}=-3J_1\theta^1+J_3\theta^2+J_4\theta^3+3J_2\theta^4,\qquad \Gamma^2_{~2}=-J_1\theta^1+J_5\theta^3+J_6\theta^2+J_2\theta^4.\]
Examples of such pKE metrics  for which $J_1=0$, $J_6=-\Psi'_2,$ $J_2=\Psi_3=1$ are given by $g=2\theta^1\theta^3+2\theta^2\theta^4$ where
\[\begin{aligned}
  \theta^1&=-\scriptstyle{\frac{1}{4}\left(3\mathrm{e}^{\frac{1}{2}y^4}\Psi'_2y^1 +y^2\mathrm{e}^{\frac{1}{2}y^4}\Psi'_2 +6\Psi'^2_2y^1+2y^2\mathrm{e}^{y^4}-2\mathrm{e}^{\frac{1}{2}y^4}W_{y^3}  -\Psi'_2W_{y^3}+\mathrm{e}^{y^4}-U_{y^1}+U_{y^1,y^1}\right)\mathrm{e}^{-2y^4}\exd y^1}\\
  &\phantom{=}+\textstyle{ \, \mathrm{e}^{y^4} \exd y^2  +\frac{1}{2}\mathrm{e}^{-\frac{3}{2}y^4}\left(y^2\mathrm{e}^{\frac{1}{2}y^4}-2W_{y^3,y^4}\right)\exd y^4}\\
  & \phantom{=}\scriptstyle{ -  \frac{1}{4}\left(6\Psi'^2_2y^1y^2\mathrm{e}^{\frac{1}{2}y^4}-9y^2\mathrm{e}^{y^4}\Psi'_2y^1 +\mathrm{e}^{y^4}\Psi'_2(y^2)^2+9\Psi'_2y^1W_{y^3}\mathrm{e}^{\frac{1}{2}y^4} -2W_{y^3}\Psi'_2y^2\mathrm{e}^{\frac{1}{2}y^4} -6\Psi'^2_2y^1W_{y^3}\right.}
\\
& \phantom{=}\scriptstyle{ \left. +4y^2\mathrm{e}^{y^4}W_{y^3} -2W_{y^3}^2 \mathrm{e}^{\frac{1}{2}y^4}-U_{y^1}y^2\mathrm{e}^{\frac{1}{2}y^4}+U_{y^1,y^1}y^2\mathrm{e}^{\frac{1}{2}y^4} +W_{y^3}^2\Psi'_2-2(y^2)^2\mathrm{e}^{\frac{3}{2}y^4}-\mathrm{e}^{y^4}W_{y^3}\right.}
\\
& \phantom{=}\scriptstyle{ \left.-4U \mathrm{e}^{\frac{1}{2}y^4}+4W_{y^3,y^3}\mathrm{e}^{\frac{1}{2}y^4}+U_{y^1}W_{y^3}-W_{y^3}U_{y^1,y^1} +y^2\mathrm{e}^{\frac{3}{2}y^4}\right)\mathrm{e}^{-2y^4}\exd y^3}
\\
  \theta^2&= \textstyle{\mathrm{e}^{-\frac{1}{2}y^4}\exd y^1 +\left(y^2-W_{y^3}\right)\mathrm{e}^{-\frac{1}{2}y^4}\exd y^3}\\
  \theta^3&=\mathrm{e}^{y^4}\exd y^3\\
  \theta^4&=\textstyle{\left(-\Psi'_2\mathrm{e}^{-\frac{1}{2}y^4}-1\right)\exd y^1  +\frac{1}{2}\exd y^4  }\\
  &\textstyle{
   \phantom{=} +\left(\frac{3}{2}\mathrm{e}^{-\frac{1}{2}y^4}\Psi'^2_2y^1+\frac{3}{4}\mathrm{e}^{-\frac{1}{2}y^4}W_{y^3} \Psi'_2  -\frac{1}{4}\mathrm{e}^{-\frac{1}{2}y^4}U_{y^1}+\frac{1}{4}\mathrm{e}^{-\frac{1}{2}y^4}U_{y^1} -\frac{1}{2}y^2\mathrm{e}^{\frac{1}{2}y^4}\right.}\\
  &\phantom{=}\textstyle{\left. +\frac{3}{4}\Psi'_2y^1-\frac{3}{4}\Psi'_2y^2- \frac{3}{4}\mathrm{e}^{\frac{1}{2}y^4} +\frac{1}{2}W_{y^3}\right) \exd y^3} 
\end{aligned}
\]
where $U=U(y^1,y^3),W=W(y^3,y^4)$ are arbitrary functions. 
\end{proposition}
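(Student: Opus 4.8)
The plan is to follow the two-step reduction template already used for Petrov types $II$ and $D$, and then to integrate the resulting closed structure equations exactly as in Theorem~\ref{thm:petrov-types-D-LocNormForm}. The statement splits naturally into a structural claim (the $\cF^5\to M$ line-bundle reduction with the three displayed connection forms) and the production of explicit examples.

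For the structural part, the translation of the triple root to zero yields the normalization $\Psi_0=\Psi_1=\Psi_2=0$, $\Psi_3\neq 0$, and since this is a repeated root of multiplicity at least two, Proposition~\ref{typ2} immediately supplies $\Gamma^2_{~1}=J_1\theta^2+J_2\theta^3$. The next step is to read off the transformation laws of $\Psi_3$ and $J_2$ under the residual upper-triangular group $\HH_{(1)}$ of \eqref{eq:typeII-red-str-gp}. Specializing the type-$II$ laws \eqref{pkIIb} to $\Psi_2=0$ gives $\tilde\Psi_3=(a_{11}/a_{22})\Psi_3$ and $\tilde J_2=a_{22}J_2$, neither of which involves $a_{12}$. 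Because both scalars are nonvanishing, the map $(a_{11},a_{22})\mapsto(a_{11}/a_{22},\,a_{22})$ lets me solve uniquely $a_{22}=1/J_2$ and $a_{11}=1/(J_2\Psi_3)$, normalizing both to $1$ and eliminating the entire diagonal torus; only the unipotent parameter $a_{12}$ survives, so the adapted coframes form a line bundle $\cF^5\to M$ with fiber coordinate $a_{12}$.

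To obtain the displayed connection forms I differentiate the normalization conditions $\der J_2=0$ and $\der\Psi_3=0$ using the Bianchi relations \eqref{eq:TypeIII-InfintAct}. Setting $\der J_2=0$ expresses $\Gamma^2_{~2}$ as a $\theta$-combination, and relabeling the coefficients of $\theta^2,\theta^3$ as $J_6,J_5$ yields $\Gamma^2_{~2}=-J_1\theta^1+J_6\theta^2+J_5\theta^3+J_2\theta^4$; feeding this into $\der\Psi_3=0$ then solves for $\Gamma^1_{~1}-\Gamma^2_{~2}$, and hence, after relabeling the $\theta^2,\theta^3$ coefficients as $J_3,J_4$, for $\Gamma^1_{~1}=-3J_1\theta^1+J_3\theta^2+J_4\theta^3+3J_2\theta^4$. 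These are precisely the claimed forms.

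For the explicit examples I impose the simplifying ansatz $J_1=0$, $J_6=-\Psi'_2$, $J_2=\Psi_3=1$ and verify that the resulting exterior differential system---the equations \eqref{pk} together with the reduced forms above and \eqref{eq:TypeIII-InfintAct}---is closed under $\der$; this consistency check is what guarantees existence and forces exactly two arbitrary functions of two variables. I then integrate as in Theorem~\ref{thm:petrov-types-D-LocNormForm}: locate the Killing fields, introduce coordinates via Darboux's theorem applied to the closed 1-forms singled out by the structure equations, and solve the remaining linear equations, the integration data surfacing as $U=U(y^1,y^3)$ and $W=W(y^3,y^4)$. Equivalently, and this is the cleanest route to a rigorously verified statement, one substitutes the displayed coframe directly into \eqref{pk}, computes the Levi-Civita connection and curvature, and confirms $\Psi_0=\Psi_1=\Psi_2=0$ with $\Psi_3\neq0$, i.e.\ genuine Petrov type $III$. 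The main obstacle is exactly this final integration/verification: the expression for $\theta^1$ is long enough that the bookkeeping (or the computer-algebra confirmation) is substantial, and the delicate point throughout is checking that the Bianchi system stays closed once the simplifying conditions are imposed, since that closure is what controls the functional generality of the family.
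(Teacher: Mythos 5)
Your proposal is correct and takes essentially the same route as the paper: the paper's proof of Proposition~\ref{typeiii} is explicitly skipped as being similar to Theorems~\ref{thm:typeD-AllCases-NormalForm} and~\ref{thm:typeII-exa-1}, and your two-step reduction (normalizing $\Psi_3$ and $J_2$ via the diagonal torus, which leaves the unipotent $a_{12}$-parameter as fiber coordinate of the line bundle $\cF^5\to M$, then solving $\der J_2=0$ and $\der\Psi_3=0$ from \eqref{eq:TypeIII-InfintAct} to obtain the displayed forms of $\Gamma^2_{~2}$ and $\Gamma^1_{~1}$) reproduces exactly the reduction the paper carries out in \S\,\ref{sec:reduction-type-iii}. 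Your integration and verification plan for the explicit examples is likewise the same Killing-field-plus-Darboux procedure (or direct substitution into \eqref{pk}) that the paper invokes from the type $D$ and type $II$ cases.
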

\begin{proof}
  We skip the proof due to its similarity to that of Theorems \ref{thm:typeD-AllCases-NormalForm} and \ref{thm:typeII-exa-1}. 
\end{proof}
\begin{remark}
We point out that by the action of the structure  group, infinitesimally  given in \eqref{eq:TypeIII-InfintAct}, one can reduce the  structure group to identity by  translating $\Psi_4$ to zero and obtain a unique choice of coframe at each point. However, to obtain examples above one does not need to carry out full reduction. Some solutions satisfying these conditions where obtained earlier by A. Chudecki in \cite{Chudecki}.    
\end{remark} 
 
\subsubsection{Petrov type $N$}\label{sec:reduction-type-n}
Assume that the quartic $W(\lambda)$ in \eqref{eq:quartic2} has a repeated root of multiplicity four. As we did in \ref{sec:petrov-type-d}, by coframe adaptation one can translate the multiple root to zero, which is equivalent to finding an adapted coframe with respect to which 
\[\Psi_0=\Psi_1=\Psi_2=\Psi_3=0,\quad \mathrm{and}\quad \Psi_4\neq 0.\]
In this case  Proposition \ref{typ2}  still remains valid and the following differential relations hold
\begin{equation}
  \label{eq:TypeN-InfintAct}
  \begin{aligned}
  \der\Psi_4&=-2\Gamma^1_{~1}\Psi_4+2\Gamma^2_{~2}\Psi_4-J_1\Psi_4\theta^1+\Psi_{42}\theta^2+\Psi_{43}\theta^3 +J_2\Psi_4\theta^4 \\
  \der J_2 &=-J_1J_2\theta^1+J_{22}\theta^2+J_{23}\theta^3+J_2^2\theta^4-\Gamma^2_{~2}J_2
\end{aligned}
\end{equation}
As a result, by normalizing $\Psi_4$ and $J_2$ to non-zero constants we can reduce the parameters $a_{11}$ and $a_{22}$ in the structure group \eqref{eq:typeII-red-str-gp}  which reduces the bundle of adapted coframes to a line bundle $\cF^5\to M$ with the element $a_{12}$ in \eqref{eq:typeII-red-str-gp} as the fiber coordinate. It follows that 
\begin{equation}
  \label{eq:TypeN-2ndRed}
  \begin{aligned}
\Gamma^1_{~1}=-\textstyle{\frac 32 J_1\theta^1+J_3\theta^2+J_4\theta^3+\frac 32 J_2\theta^4},\quad \Gamma^2_{~2}=-J_1\theta^1+J_5\theta^2+J_6\theta^3+J_2\theta^4.
\end{aligned}
\end{equation}
for some functions $J_3,\dots, J_6.$ 
It is straightforward to obtain 
\[\exd J_4\equiv -\textstyle{\frac 52 J_2\Gamma^1_2}\qquad \mathrm{mod}\qquad \{\theta^1,\theta^2,\theta^3,\theta^4\}.\]
Because the quantity $J_2$ is normalized to a non-zero constant, the above differential relation can be interpreted as the infinitesimal action of the 1-dimensional structure group on $J_4$ (see Remark \ref{rmk:infinit-action-psi0}). Hence, by choosing $a_{12}$ appropriately, one can translate $J_4$  to zero which would reduce the structure group to identity. In other words there is a unique coframe at each point with respect to which one has the relations equations  \eqref{eq:Gamma21-reduced},  \eqref{eq:TypeN-2ndRed} and
\begin{equation}
  \label{eq:Gamma12-TypeN-red}
\Gamma^1_{~2}=\textstyle{-\frac 15\left(2J_3+3J_5+7\frac{\Psi'_2}{J_2}\right)\theta^1+J_7\theta^2+J_8\theta^3+\frac 35 J_6\theta^4}.
\end{equation}
for some functions $J_7$ and $J_8$ on $M.$ As a result we obtain the following.
\begin{theorem}\label{typeiv}
Given a pKE metric whose anti-self-dual Weyl curvature has Petrov type $N$ and for which $J_2\neq 0$,  there is a unique adapted coframe that  preserves 
\[\Psi_0=\Psi_1=\Psi_2=\Psi_3=0,\qquad \Psi_4=\mathrm{const}\neq 0,\qquad J_2=\mathrm{const}\neq 0,\qquad J_4=0\]
with respect to which the relations \eqref{eq:Gamma21-reduced}, \eqref{eq:TypeN-2ndRed} and \eqref{eq:Gamma12-TypeN-red} hold. 
A class of examples for which $J_1=J_6=J_7=0$, $\Psi'_2=-8J_3=4J_5,$  $J_2=-4,$ and $\Psi_3=1$ is given by $g=2\theta^1\theta^3+2\theta^2\theta^4$ where
  \[
\begin{aligned}
  \theta^1&= 2\mathrm{e}^{-3y^4}\exd y^1+\left(-16(y^3)^2+F_1(y^2)+y^3F_2(y^2)\right)\exd y^2\\
  \theta^2&=8 \mathrm{e}^{-2y^4}\left(\exd y^3-y^1\exd y^2\right)\\
  \theta^3&= \mathrm{e}^{y^4}\exd y^2\\
  \theta^4 &= \textstyle{-\frac{1}{2}\exd y^4-\frac{1}{2}\Psi'_2\mathrm{e}^{-2y^4}\left(\exd y^3-y^1\exd y^2\right) }
\end{aligned}
\] 
where $F_1(y^2)$ and $F_2(y^2)$ are arbitrary functions. 
\end{theorem}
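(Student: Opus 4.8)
The first assertion is the conclusion of the reduction carried out in \ref{sec:reduction-type-n}, so the plan is simply to assemble those steps into a uniqueness statement. After translating the quadruple root of $W(\lambda)$ to the origin, so that $\Psi_0=\Psi_1=\Psi_2=\Psi_3=0$ and $\Psi_4\neq 0$, Proposition \ref{typ2} applies and gives \eqref{eq:Gamma21-reduced}, reducing the structure bundle to $\cF^7$. Normalizing the two nonzero densities $\Psi_4$ and $J_2$ to constants consumes the torus parameters $a_{11},a_{22}$ and yields the line bundle $\cF^5\to M$ together with \eqref{eq:TypeN-2ndRed}. Finally, the congruence $\exd J_4\equiv-\tfrac52 J_2\,\Gamma^1_{~2}\pmod{\{\theta^1,\theta^2,\theta^3,\theta^4\}}$, valid because $J_2$ is a nonzero constant, exhibits $J_4$ as the image of the last group parameter $a_{12}$; imposing $J_4=0$ therefore fixes $a_{12}$ uniquely, reduces the structure group to the identity, and produces the asserted $\{e\}$-structure together with \eqref{eq:Gamma12-TypeN-red}.

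For the explicit family I would follow the integration scheme of Theorems \ref{thm:typeD-AllCases-NormalForm} and \ref{thm:typeII-exa-1}. First I would impose the relations $J_1=J_6=J_7=0$, $J_2=-4$ and $\Psi'_2=-8J_3=4J_5$ together with the remaining normalization, and verify that they are closed under $\exd$ using the type~$N$ analogues of the Bianchi identities \eqref{eq:BianchiTypeD-1}; this compatibility is what forces the particular numerical values and simultaneously pins down the residual function $J_8$. With these relations in place, \eqref{eq:TypeN-2ndRed}--\eqref{eq:Gamma12-TypeN-red} render every connection form explicit in the coframe, namely $\Gamma^2_{~1}=-4\theta^3$, $\Gamma^1_{~1}=-\tfrac{\Psi'_2}{8}\theta^2-6\theta^4$, $\Gamma^2_{~2}=\tfrac{\Psi'_2}{4}\theta^2-4\theta^4$ and $\Gamma^1_{~2}=\tfrac{\Psi'_2}{4}\theta^1+J_8\theta^3$.

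Once the connection is determined the first structure equations \eqref{pk} form a closed exterior differential system for the $\theta^i$ alone. The plan is then to read the isometry algebra off $\sL_v\theta^i=0$, use the orbits of its generators---together with Darboux' theorem applied to the closed and rank-one combinations of the $\theta^i$ that the structure equations single out---to introduce local coordinates $y^1,\dots,y^4$, and to solve for each $\theta^i$ in turn; the two arbitrary functions $F_1(y^2),F_2(y^2)$ in the displayed coframe encode the residual freedom along the orbit directions. To certify the result it then suffices to run the computation in reverse: compute $\exd\theta^i$ for the displayed coframe, recover the unique null connection forms via Remark \ref{rmk:2approaches}, check they agree with the expressions above, and read $\Psi_0,\dots,\Psi_4$ off the second structure equations to confirm $\Psi_0=\Psi_1=\Psi_2=\Psi_3=0$ with $\Psi_4\neq 0$, i.e.\ Petrov type~$N$.

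The hard part is the integration, not the verification. The type~$N$ Bianchi identities are not recorded in the text and must first be reconstructed; the compatibility of the four simplifying relations with them has to be confirmed, and it is this step that fixes the constants $J_2=-4$ and $\Psi'_2=-8J_3=4J_5$ as the unique choice collapsing the system to a two-function family. The subsequent coordinate normalization requires several applications of Darboux and Frobenius in a carefully chosen order. The reverse, verification direction is a routine if lengthy EDS computation, best delegated to the computer-algebra tools mentioned in the introduction.
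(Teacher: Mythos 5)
Your proposal is correct and follows essentially the same route as the paper: the uniqueness claim is exactly the chain of reductions already carried out in the text of \ref{sec:reduction-type-n} (Proposition \ref{typ2}, normalization of $\Psi_4$ and $J_2$ to kill $a_{11},a_{22}$, then translating $J_4$ to zero via $a_{12}$), and for the explicit family the paper itself skips the details, citing similarity to Theorems \ref{thm:typeD-AllCases-NormalForm} and \ref{thm:typeII-exa-1} --- precisely the Bianchi-closure, Killing-vector and Darboux integration scheme you outline, with your connection forms $\Gamma^2_{~1}=-4\theta^3$, $\Gamma^1_{~1}=-\tfrac{\Psi'_2}{8}\theta^2-6\theta^4$, $\Gamma^2_{~2}=\tfrac{\Psi'_2}{4}\theta^2-4\theta^4$, $\Gamma^1_{~2}=\tfrac{\Psi'_2}{4}\theta^1+J_8\theta^3$ agreeing with \eqref{eq:Gamma21-reduced}, \eqref{eq:TypeN-2ndRed} and \eqref{eq:Gamma12-TypeN-red} under the stated normalizations. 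One minor caveat: the values $J_2=-4$ and $\Psi'_2=-8J_3=4J_5$ are a simplifying ansatz defining ``a class of examples'' rather than a choice uniquely forced by closure under $\exd$ as you suggest, but since your plan certifies the displayed coframe by direct reverse verification this does not affect correctness.
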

\begin{proof}
  We skip the proof due to its similarity to that of Theorems \ref{thm:typeD-AllCases-NormalForm} and \ref{thm:typeII-exa-1}. 
\end{proof}

\subsubsection{Petrov type $O$}
\label{sec:type-o}
The Petrov type O corresponds to pKE metrics for which $\Psi_0=\dots=\Psi_4=0.$ Since the only non-zero quantity in the structure equations \eqref{pk} is the constant $\Psi'_2,$ it follows that such metrics are homogeneous and therefore no reduction of the structure bundle is possible. Nevertheless, one can follow the procedure explained before and integrate the structure equations from which the following choice of coframe is obtained
\begin{equation}
  \label{eq:hom-TypeO}
\theta^1=\textstyle{\frac{\exd y^1}{\Psi'_2(y^2+y^1y^3-y^4)}},\ \  \theta^2=\textstyle{\frac{\exd y^2}{\Psi'_2(y^2+y^1y^3-y^4)}},\ \  \theta^3=\textstyle{\frac{(y^4-y^2)\exd y^3-y^3\exd y^4}{\Psi'_2(y^2+y^1y^3-y^4)}},\ \  \theta^4=\textstyle{\frac{y^1\exd y^3-\exd y^4}{\Psi'_2(y^2+y^1y^3-y^4)}}
\end{equation}
Using the coframe above one can recover the potential function  $V_1$ given in \ref{sec:simple-examples} for the so-called dancing metric.

\subsubsection{Homogeneous models and local generality of various Petrov types}
\label{sec:local-generality-pke}
The  structure equations of pKE metrics in dimension four and reduced structure equations obtained for  non-generic Petrov types enable one to use the Cartan-K\"ahler theory and obtain the  local generality of analytic pKE metrics of each Petrov type. We will not give the details of how  Cartan-K\"ahler theory is implemented and refer the reader to \cite{Bryant-Notes} for details. 
 
Assuming analyticity for pKE metrics, the following table gives the local generality of various Petrov types. 
\begin{table}[h]
 \begin{center}
\begin{tabular}{ |c|c| } 
 \hline
  Petrov type  & Local generality \\ \hline\hline
$G$ & 2 functions  of 3 variables \\ \hline
  $II$ & 4 functions  of 2 variables  \\\hline
  $III$ & 3 functions of 2 variables  \\\hline
  $N$ & 2 functions of 2 variables  \\ \hline
 $II$ and Yang-Mills & 2 functions of 2 variables  \\\hline
  $D$ & 5 constants  \\\hline
 $O$ & 1 constant  \\\hline
\end{tabular}
\end{center}
  \caption{Local generality of pKE metrics}
  \label{tab:CK}

\vspace{-.65cm}

\end{table}

Furthermore, the reduced structure equations for each Petrov types allows one to look for homogeneous models. Finding homogeneous models involves a straightforward inspection of structure equations considering all possible normalizations which can be carried out algorithmically. We will not present all the necessary computation here.  It turns out that the only homogeneous models of pKE metrics satisfying $\Psi_2'\neq 0$ are the 1-parameter families of pKE metrics of type $D$ and $O$ which correspond  to the coframes \eqref{eq:hom-typeD} and \eqref{eq:hom-TypeO}. In particular, there is no homogeneous pKE metric of type $G,$ $II,$ $III$ and $N$ for which $\Psi'_2\neq 0.$
  
\section{(2,3,5)-distributions arising from pKE metrics}\label{sec:2-3-5-dist}
This section contains the highlight of the article.  In \ref{sec:primer-2-3-5} we  give a brief review of the  geometry of  (2,3,5)-distributions. In \ref{sec:null-self-dual}  we show that  the naturally induced rank 2 twistor distribution on the space of self-dual null planes of any pKE metric is (2,3,5) in an open subset if $\Psi'_2\neq 0.$ Furthermore, the root type of the Cartan quartic of this twistor distribution   agrees with the root type of  the quartic representation of $Weyl^-.$  This remarkable and surprising  coincidence is contrasted with the case of twistor distribution naturally arising on the space of anti-self-dual null planes of pKE metrics satisfying $Weyl^-\neq 0,$ which is considered in \ref{sec:2-3-5-ASD}. In the latter case, the coefficients of the Cartan quartic depend on the fourth jet of the coefficients of $Weyl^-$ and there is no further simplification from the larger context  of  twistor distributions arising from indefinite conformal structures in dimension four satisfying $Weyl^-\neq 0.$ In other words, a priori, no relation between the type of the Cartan quartic and the Petrov type of $Weyl^-$ or $Weyl^+$ can be made (see Remark \ref{rmk:235-asd-weyl-bundle}). Moreover, our construction in \ref{sec:null-self-dual} gives rise to 5-dimensional para-Sasaki-Einstein structures  and conformal structures with $\mathbf{SL}_3(\RR)$ holonomy, as studied in \cite{SW-G2}. Consequently, our explicit examples of pKE metrics of special real Petrov  type in  \ref{sec:cart-reduct-symm}, provide examples of 5-dimensional  para-Sasaki-Einstein metrics.
\subsection{A primer on $(2,3,5)$ distributions}\label{sec:primer-2-3-5}

In this section we recall  the basic definitions and theorems about the local geometry of a generic 2-plane field on a 5-dimensional manifold, $Q,$ which involves a Cartan connection and a naturally induced  conformal structure of signature (3,2). 

Given a 5-dimensional manifold, $Q,$ with a rank 2 distribution $\cD\subset \rT Q$ let $\partial \cD$ denote its first derived system  defined as the distribution whose sections are given by $\Gamma(\cD)+[\Gamma(\cD),\Gamma(\cD)],$ where $\Gamma(\cD)$ denotes the sheaf of sections of the distribution $\cD.$ Moreover, define $\partial^2\cD=\partial(\partial \cD).$   
\begin{definition}
  A rank 2 distribution in dimension 5, $\cD\subset \rT Q,$ is called a (2,3,5)-distribution if 
 \[\mathrm{rank}(\partial \cD)=3,\qquad\mathrm{and}\qquad \mathrm{rank}(\partial^2\cD)=5.\]
\end{definition}
 Locally, a generic rank 2 distribution  is a (2,3,5)-distribution.  
Given a (2,3,5)-distribution, locally, one  can find a frame  $\{\mathbf{v}_1,\cdots,\mathbf{v}_5\}$ for $M$ such that
\[\cD=\mathrm{span}\{\bfv_4,\bfv_5\},\qquad \partial \cD=\mathrm{span}\{\bfv_3,\bfv_4,\bfv_5\},\qquad \partial^2\cD=\mathrm{span}\{\bfv_1,\cdots,\bfv_5\}\]
where 
\[\bfv_3=-[\bfv_4,\bfv_5],\qquad \bfv_2=-[\bfv_3,\bfv_4],\qquad \bfv_1=-[\bfv_3,\bfv_5].\] 
As a result, the corresponding coframe $\{\eta^1,\cdots,\eta^5\}$ satisfies
\begin{equation}
  \label{eq:coframe235}
  \begin{aligned}
    \exd\eta^1&\equiv \eta^3\w\eta^4\qquad \mathrm{mod}\qquad \{\eta^1,\eta^2\},\\
    \exd\eta^2&\equiv \eta^3\w\eta^5\qquad \mathrm{mod}\qquad \{\eta^1,\eta^2\},\\
    \exd\eta^3&\equiv \eta^4\w\eta^5\qquad \mathrm{mod}\qquad \{\eta^1,\eta^2,\eta^3\}.
  \end{aligned}
\end{equation}
 Cartan in his famous 'five-variables' paper \cite{CartanG2} solved the equivalence problem for (2,3,5)-distributions   and explicitly introduced the distribution $\cD_{o}$ whose algebra of infinitesimal symmetries is given by the split real form of the exceptional Lie algebra $\mathfrak{g}^*_2.$  Recall that the noncompact exceptional simple Lie group  of dimension 14, $\mathbf{G}^*_2\subset \mathbf{SO}_{4,3} $ acts transitively on the projective quadric $\mathsf{Q}_{3,2}\subset \PP^{6}$ defined by the (3,2)-signature diagonal matrix.  Let $\mathbf{P}_1$ be the parabolic subgroup of $\mathbf{G}^*_2$ that preserves a null line in $\mathsf{Q}_{3,2}$.   Using his method of equivalence, Cartan associated an $\{e\}$-structure on a 14-dimensional $\mathbf{P}_1$-principal bundle $\pi: \cG\to Q$ to any (2,3,5)-distribution.

 Using the appropriate transformation Cartan's original construction  results in the following theorem.
\begin{theorem}[\cite{CartanG2,NurowskiG2}]\label{thm:CartanNurowski-235}
  Any (2,3,5)-distribution, $\cD\subset \rT Q,$ defines a Cartan geometry $(\cG,Q,\omega_{\bG^*_2})$ of type $(\rG_2^*,\rP_1).$ Expressing the  $\fg_2^*$-valued Cartan connection  as 
\begin{equation}
   \label{eq:G2-CarNur-Conn}
    \def\arraystretch{1.5}
   \omega_{\mathbf{G}^*_2}=\textstyle{
\begin{pmatrix}
     -\zeta_1-\zeta_4 & -\zeta_8 & -\zeta_9  & -\frac{1}{\sqrt{3}}\zeta_7 & \frac{1}{3}\zeta_5 & \frac{1}{3}\zeta_6 & 0\\
      \eta^1 & \zeta_1 & \zeta_2 & \frac{1}{\sqrt{3}}\eta^4 & -\frac{1}{3}\eta^3 & 0 & \frac{1}{3}\zeta_6\\
      \eta^2 & \zeta_3 & \zeta_4 & \frac{1}{\sqrt{3}}\eta^5 & 0 & -\frac{1}{3}\eta^3 & -\frac{1}{3}\zeta_5\\
      \frac{2}{\sqrt{3}}\eta^3 & \frac{2}{\sqrt{3}}\zeta_5 & \frac{2}{\sqrt{3}}\zeta_6 & 0 & \frac{1}{\sqrt{3}}\eta^5 & -\frac{1}{\sqrt{3}}\eta^4 & -\frac{1}{\sqrt{3}}\zeta_7\\
      \eta^4 & \zeta_7 & 0 & \frac{2}{\sqrt{3}}\zeta_6 & -\zeta_4 & \zeta_2 & \zeta_9\\
      \eta^5 & 0 & \zeta_7 & -\frac{2}{\sqrt{3}}\zeta_5 & \zeta_3 & -\zeta_1 & -\zeta_8 \\
      0 & \eta^5 & -\eta^4 & \frac{2}{\sqrt{3}}\eta^3 & -\eta^2 & \eta^1 & \zeta_1+\zeta_4
   \end{pmatrix}}
 \end{equation}
the distribution $\cD$ is the projection  of $\mathrm{Ker}\{\eta^1,\eta^2,\eta^3\}$ from $\cG$ to $Q.$  
\end{theorem}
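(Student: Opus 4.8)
The plan is to prove this by Cartan's method of equivalence, exactly the technique used throughout the present paper; in modern language the construction produces the regular, normal parabolic geometry associated to the filtration $\cD\subset\partial\cD\subset\partial^2\cD=\rT Q$.

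I would first fix the nilpotent symbol algebra of the distribution. Writing $\frm=\frm_{-1}\oplus\frm_{-2}\oplus\frm_{-3}$ with $\frm_{-1}=\cD$, $\frm_{-2}=\partial\cD/\cD$ and $\frm_{-3}=\rT Q/\partial\cD$, the bracket relations encoded in \eqref{eq:coframe235} show $\dim\frm_{-1}=2$, $\dim\frm_{-2}=1$, $\dim\frm_{-3}=2$, and that $\frm$ is the free step-$3$ nilpotent algebra on $\frm_{-1}$, with $\frm_{-2}=\Lambda^2\frm_{-1}$ and $\frm_{-3}\cong\frm_{-1}\otimes\frm_{-2}$. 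The grading-preserving automorphisms of $\frm$ constitute $\frg_0\cong\gla_2(\RR)$, acting on $\frm_{-1}$ by the standard representation; this is the structure group of the initial reduction on the bundle $\cF\to Q$ of coframes adapted to \eqref{eq:coframe235}, of fiber dimension $4$.

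The core of the argument is the Tanaka prolongation of the pair $(\frm,\frg_0)$. One verifies that it is finite-dimensional and equals the split real form $\frg_2^*$, carrying the contact grading $\frg_2^*=\frg_{-3}\oplus\frg_{-2}\oplus\frg_{-1}\oplus\frg_0\oplus\frg_1\oplus\frg_2\oplus\frg_3$ of dimensions $2,1,2,4,2,1,2$, whose positive part is dual to $\frm$. Equivalently I would run the equivalence-method loop directly on $\cF$: compute the intrinsic torsion, absorb its absorbable part into the connection forms, and normalize the residual essential torsion, each normalization either reducing the structure group or prolonging the bundle. The vanishing of the relevant Lie-algebra cohomology in positive homogeneity (the Tanaka--Kostant normalization condition) guarantees that each normalization is unique and that the loop terminates, producing a principal $\rP_1$-bundle $\pi\colon\cG\to Q$ of dimension $5+9=14$ equipped with a canonical coframe, i.e. a $\frg_2^*$-valued $1$-form $\omega_{\bG_2^*}$ satisfying the three axioms of Definition \ref{def:cartan-conn-def}. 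The explicit block form \eqref{eq:G2-CarNur-Conn} then results from realizing $\frg_2^*\subset\soa_{4,3}$ on $\RR^7$ through the stabilizer of a generic $3$-form, choosing a Witt basis adapted to the grading, and reading off the entries: the soldering forms $\eta^1,\dots,\eta^5$ fill the first column in the slots dual to $\frm_{-3},\frm_{-2},\frm_{-1}$, while the $\zeta_i$ occupy the $\frg_0\oplus\frg_1\oplus\frg_2\oplus\frg_3$ positions.

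With the connection in this normal form the last assertion is immediate. A tangent vector $X\in\rT\cG$ lies in $\Ker\{\eta^1,\eta^2,\eta^3\}$ precisely when the $\frg_{-3}\oplus\frg_{-2}$ components of $\omega_{\bG_2^*}(X)$ vanish, that is $\Ker\{\eta^1,\eta^2,\eta^3\}=\omega_{\bG_2^*}^{-1}(\frg_{-1}\oplus\frg_{\geq0})$ where $\frg_{\geq0}=\mathrm{Lie}(\rP_1)$. Since $\Ad(\rP_1)$ preserves the filtration component $\frg_{-1}\oplus\frg_{\geq0}$, this distribution on $\cG$ is $\rP_1$-invariant, contains the vertical bundle, and therefore descends under $\pi$ to the rank-$2$ distribution whose $\frg_{-1}$-part is $\frm_{-1}=\cD$. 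I expect the main obstacle to be the prolongation step: confirming that the prolongation closes off exactly at $\frg_2^*$---equivalently, that after the first structure-group reductions the remaining essential torsion is fully absorbed with no surviving invariant---requires either the full Kostant cohomology computation or, in Cartan's direct approach, careful bookkeeping of the successive absorptions and reductions. Everything else is formal once this is established.
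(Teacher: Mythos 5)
Your proposal is correct and follows essentially the same approach as the paper's own source for this theorem: the paper does not prove it but cites \cite{CartanG2,NurowskiG2}, and your argument --- Cartan's equivalence method, packaged in its modern Tanaka-prolongation/parabolic-geometry form, with the explicit matrix \eqref{eq:G2-CarNur-Conn} coming from the realization $\frg_2^*\subset\soa_{4,3}$ as the stabilizer of a generic $3$-form --- is exactly the argument carried out there. One cosmetic correction: the grading of dimensions $(2,1,2,4,2,1,2)$ you use is the $|3|$-grading of $\frg_2^*$ determined by $\rP_1$, not the \emph{contact} grading, which is the $|2|$-grading belonging to the other parabolic of $\mathbf{G}_2^*$.
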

\begin{remark}\label{rmk:Curv-235-quartic}
Cartan realized that the curvature  $K_{\omega_{\bG^*_2}}=\exd\omega_{\rG_2^*}+\omega_{\rG_2^*}\w\omega_{\rG_2^*}$ can be interpreted as a ternary quartic form  $\mathbf{W}\in \mathrm{Sym}^4(\partial\cD)^*$ and expressed as
\begin{equation}
  \label{eq:Cartan-Full-quartic}
  \begin{aligned}
\mathbf{W}&=\sum_{i=0}^4\m{i\\4}a_i(\eta^4)^{4-i}(\eta^5)^i+\sum_{i=0}^3\m{i\\3}b_i(\eta^4)^{3-i}(\eta^5)^i\eta^3\\ &+\sum_{i=0}^2\m{i\\2}c_i(\eta^4)^{2-i}(\eta^5)^i(\eta^3)^2+\sum_{i=0}^1d_i(\eta^4)^{1-i}(\eta^5)^i(\eta^3)^3+e(\eta^3)^4
  \end{aligned}
\end{equation} 
where the coefficients $a_0,\cdots,e$ are components of the curvature $K_{\omega_{\bG^*_2}}$ (see \cite{NurowskiG2}). Moreover, the fundamental curvature tensor is a binary quartic form  $\mathbf{C}\in\mathrm{Sym}^4(\cD^*),$ referred to as the \emph{Cartan quartic}, given by the first 5 terms in \eqref{eq:Cartan-Full-quartic}. If  $\mathbf{C}$ is identically zero  it follows that  $K_{\omega_{\bG^*_2}}=0$ i.e.  the (2,3,5)-distribution is flat.
 It is convenient to  express the Cartan quartic in 1-variable $z$ as follows  
\begin{equation}\label{eq:CartanQuartic-C}
  \begin{aligned}
C(z)&:=\textstyle{\mathbf{C}(\frac{\partial}{\partial\eta^4}+z\frac{\partial}{\partial\eta^5},\frac{\partial}{\partial\eta^4}+z\frac{\partial}{\partial\eta^5},\frac{\partial}{\partial\eta^4}+z\frac{\partial}{\partial\eta^5},\frac{\partial}{\partial\eta^4}+z\frac{\partial}{\partial\eta^5})}\\ &=a_0+4a_1z+6a_2z^2+4a_3z^3+a_4z^4.
\end{aligned}
\end{equation}
\end{remark}
Using Cartan's result and the embedding  $\rG^*_2\hookrightarrow \mathbf{SO}_{4,3},$ the following non-trivial link between (2,3,5)-distributions and conformal structures of signature $(3,2)$ can be obtained.
\begin{theorem}[\cite{NurowskiG2}]\label{thm:Pawel-G2-confstr}
  Any (2,3,5)-distribution $\cD\subset \rT Q$ defines a conformal structure $[\tilde h]$ of signature (3,2) on $Q,$ which can be expressed as $\tilde h=s^*h$ for any section  $s\colon Q\to \cG,$ where 
  \begin{equation}
 \label{eq:Nurowski-Metric}
     h=\eta^1\eta^5-\eta^2\eta^4+{\textstyle \frac{2}{3}}\,\eta^3\eta^3\in \mathrm{Sym}^2(\rT^*Q).
  \end{equation}
The conformal holonomy of this conformal structure takes value  in $\rG_2^*$ and its Weyl curvature can be expressed in terms of  $K_{\omega_{\bG^*_2}}.$
\end{theorem}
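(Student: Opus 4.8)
The plan is to realize the $(\rG_2^*,\rP_1)$ Cartan geometry of Theorem \ref{thm:CartanNurowski-235} as a \emph{conformal} Cartan geometry through the inclusion $\rG_2^*\hookrightarrow \mathbf{SO}_{4,3}$ afforded by the $7$-dimensional representation, and then to read off the three assertions from this extension. First I would record the linear-algebraic fact underlying the $7\times 7$ form \eqref{eq:G2-CarNur-Conn}: the matrices occurring there are precisely the elements of $\soa_{4,3}$ preserving a fixed symmetric bilinear form $B$ of signature $(4,3)$ on $\RR^7$, the anti-diagonal-type form dictated by the off-diagonal pattern of \eqref{eq:G2-CarNur-Conn}. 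Equivalently one checks $\omega_{\bG^*_2}^{\,T}B+B\,\omega_{\bG^*_2}=0$, so that $\iota_*\fg_2^*\subset\soa_{4,3}$. The parabolic $\rP_1$, the stabilizer in $\rG_2^*$ of the highest-weight null line $\ell\subset\RR^7$, then lands inside the stabilizer $\rP\subset\mathbf{SO}_{4,3}$ of the same null line.

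With this in hand I would extend the Cartan geometry by the standard functor, forming $\tilde\cG=\cG\times_{\rP_1}\rP$ and inducing from $\omega_{\bG^*_2}$ a $\soa_{4,3}$-valued Cartan connection $\tilde\omega$ of type $(\mathbf{SO}_{4,3},\rP)$; this extension exists precisely because $\iota_*\fg_2^*\subset\soa_{4,3}$. Invoking the classical equivalence between regular normal parabolic geometries of type $(\mathbf{SO}_{4,3},\rP)$ and conformal structures of signature $(3,2)$ in dimension five, the pair $(\tilde\cG,\tilde\omega)$ determines a conformal class $[\tilde h]$ on $Q$. A concrete representative comes directly from $B$: under $\rT_xQ\cong\fg_2^*/\mathfrak{p}_1$, the form induced by $B$ on $\ell^\perp/\ell$ pairs the summand dual to $(\eta^1,\eta^2)$ with that dual to $(\eta^4,\eta^5)$ and is nondegenerate on the summand dual to $\eta^3$, producing exactly $h=\eta^1\eta^5-\eta^2\eta^4+\tfrac23(\eta^3)^2$; the coefficient $\tfrac23$ and the relative signs are forced by the $\tfrac{2}{\sqrt3}$ normalizations visible in \eqref{eq:G2-CarNur-Conn}. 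Every term of $h$ is homogeneous of the same grading weight, so the isotropy representation of $\rP_1$ scales $h$ by a single positive conformal factor; consequently $[s^*h]$ is independent of the section $s\colon Q\to\cG$ and has the signature $(3,2)$ of $B$ on $\ell^\perp/\ell$.

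For the holonomy statement I would argue that, by uniqueness of the normal connection together with the regularity of the $(2,3,5)$ geometry, $\tilde\omega$ is the \emph{normal conformal Cartan connection} of $[\tilde h]$, so that the conformal holonomy is the holonomy of $\tilde\omega$. Since $\tilde\omega$ is induced from the $\fg_2^*$-valued $\omega_{\bG^*_2}$, its curvature $K_{\tilde\omega}=\iota_*K_{\omega_{\bG^*_2}}$ and all its iterated covariant derivatives take values in $\iota_*\fg_2^*$, so by the Ambrose--Singer theorem the holonomy algebra lies in $\fg_2^*$, giving $\mathrm{Hol}\subset\rG_2^*$. Finally, the conformal Weyl curvature is the lowest-homogeneity harmonic part of $K_{\tilde\omega}=\iota_*K_{\omega_{\bG^*_2}}$, hence a $\rP$-equivariant projection of $K_{\omega_{\bG^*_2}}$; this is the asserted expression in terms of the $(2,3,5)$ curvature and, componentwise, recovers the quartic data $a_0,\dots,e$ of Remark \ref{rmk:Curv-235-quartic}.

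The main obstacle is the verification, in the last two paragraphs, that the \emph{extended} connection $\tilde\omega$ is genuinely the normal conformal Cartan connection rather than merely some connection inducing the right conformal class: one must check that Cartan's normalization condition defining $\omega_{\bG^*_2}$ for $(2,3,5)$-distributions is carried by $\iota$ to the conformal normalization $\partial^*K_{\tilde\omega}=0$, i.e. that the two Kostant codifferentials are compatible under the embedding. Once this compatibility is established, the holonomy reduction and the identification of the Weyl curvature with the $\fg_2^*$-curvature follow immediately, and matching the explicit coefficient $\tfrac23$ in $h$ is then only a bookkeeping check on the grading normalization used in \eqref{eq:G2-CarNur-Conn}.
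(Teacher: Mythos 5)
Your route is genuinely different from that of the cited source. The paper itself does not prove this theorem: it quotes it from \cite{NurowskiG2}, where the conformal class was obtained by direct computation --- Nurowski solved Cartan's equivalence problem for $(2,3,5)$-distributions explicitly, wrote down $h$ in terms of the resulting coframe, and verified by exterior calculus that the structure group acts on $h$ by conformal rescalings, so that $[s^*h]$ is independent of the section. You instead run the Fefferman-type extension argument: push the $(\rG_2^*,\rP_1)$ geometry of Theorem \ref{thm:CartanNurowski-235} along $\rG_2^*\hookrightarrow\mathbf{SO}_{4,3}$ and read the theorem off the extended parabolic geometry. For the first assertion your argument is complete as it stands, and does not even need normality: the underlying conformal structure of a geometry of type $(\mathbf{SO}_{4,3},\rP)$ depends only on the soldering part of the Cartan connection, and your identification of the form induced by $B$ on $\ell^\perp/\ell$ with \eqref{eq:Nurowski-Metric} --- including the coefficient $\tfrac23$ forced by the $\tfrac{2}{\sqrt{3}}$ normalization in \eqref{eq:G2-CarNur-Conn} --- is correct. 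What this route buys over the computational one is conceptual transparency and the holonomy statement almost for free; what it costs is the compatibility check you yourself flag.

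That flagged step is a genuine gap, and it is precisely where the content of the last two assertions lies. Conformal holonomy is by definition the holonomy of the \emph{normal} conformal Cartan connection; if the extended connection $\tilde\omega$ failed to be normal, its manifest reduction to $\fg_2^*$ would say nothing about the conformal holonomy of $[\tilde h]$, nor could the Weyl tensor be read off $K_{\omega_{\bG^*_2}}$. Extension functors do not preserve normality in general, and here it holds for representation-theoretic reasons special to this embedding; the verification that the extended curvature is coclosed for the conformal Kostant codifferential is exactly the computation carried out by Hammerl and Sagerschnig (cf.\ the references \cite{HS-spinor,HK-G2,SW-G2,CS-Parabolic} invoked in Remark \ref{rmk:parallel-conformal-spinor}, which is where this paper sources the holonomy claim --- note that it goes beyond the original theorem of \cite{NurowskiG2}). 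So as written, your proposal proves the first claim and \emph{reduces} the other two to an unproved compatibility; to close it you must either reproduce that codifferential comparison or check $\partial^*K_{\tilde\omega}=0$ directly from the structure equations of \eqref{eq:G2-CarNur-Conn}. One smaller repair: once normality is known, the holonomy reduction follows most cleanly from the fact that $(\cG,\omega_{\bG^*_2})$ is a reduction of the extended bundle with connection, not from Ambrose--Singer --- curvature (and its derivatives) taking values pointwise in a subalgebra does not by itself bound the holonomy without the parallel-transport conjugations that the bundle reduction supplies.
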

\begin{remark}\label{rmk:235QuarticRedone}
  Using Theorem \ref{thm:Pawel-G2-confstr}, we give another interpretation of the Cartan quartic \eqref{eq:CartanQuartic-C} which will be important for analyzing non-integrable twistor distributions.
  At each point $q\in Q,$ consider the $\bbS^1$-family  of  planes
  \begin{equation}
    \label{eq:Znull}
    \cZ_q:=\Ker\{\eta^2-z\eta^1,\eta^5-z\eta^4,\eta^3\}\subset\rT_q Q,
  \end{equation}
  where $z\in\RR\cup\{\infty\}.$ Such planes are null with respect to the conformal structure $[h],$ defined in  \eqref{eq:Nurowski-Metric}, and intersect the distribution $\cD_q$ along the lines $\langle\frac{\partial}{\partial\eta^4}+z\frac{\partial}{\partial\eta^5}\rangle.$ The bundle
  \begin{equation}
    \label{eq:Zbundle}
    \gamma\colon\cZ\to Q,
  \end{equation}
where $\gamma^{-1}(q)=\cZ_q,$ is the circle bundle of such null planes.
  Denote the components of the Weyl curvature for the conformal structure $[h]$ by  $W^i_{~jkl}$   where $1\leq i,j,k,l\leq 5.$ Following our discussion in \ref{sec:cart-penr-petr}, let $W_{ijkl}=h_{im}W^m_{~jkl}$ and define the multilinear map
  \[\widetilde\bW:=W_{ijkl}(\eta^i\w\eta^j)\circ(\eta^k\w\eta^l)\in\mathrm{Sym}^2(\Lambda^2 \rT Q)\to C^\infty(Q).\]
  Restricting to $\cZ,$ one obtains the quartic polynomial
  \[
    \begin{aligned}
      C(z)&=\widetilde\bW(\textstyle{\frac{\partial}{\partial\eta^4}+z\frac{\partial}{\partial\eta^5}, \frac{\partial}{\partial\eta^1}+z\frac{\partial}{\partial\eta^2},
      \frac{\partial}{\partial\eta^4}+z\frac{\partial}{\partial\eta^5},
      \frac{\partial}{\partial\eta^1}+z\frac{\partial}{\partial\eta^2}})\\
    &=a_4z^4+4a_3z^3+6a_2z^2+4a_1z+a_0,
      \end{aligned}
\]
where
\[a_4=W_{5225},\qquad a_3=W_{4225},\qquad a_2=W_{4125},\qquad a_1=W_{4124},\qquad a_0=W_{4114}.\]
Let us point out that the circle bundle $\cZ$ is not preserved by the action of the full structure group for the geometry of  (2,3,5)-distributions. In order to remedy this issue and define $\cZ$ one can   make  a choice of splitting for $\partial\cD$ given by $\partial\cD=\cD\oplus\langle\ell\rangle.$ Such splitting will reduce the structure group and allows one to define $\cZ$ invariantly. 
We will see in the next section that twistor distributions arising from pKE metrics are naturally equipped with such splitting therefore enable one to define  $\cZ.$ 
\end{remark}

\begin{remark}\label{rmk:parallel-conformal-spinor}   
  Using Theorem \ref{thm:Pawel-G2-confstr}, the rank 2 distribution $\cD=\Ker\{\eta^1,\eta^2,\eta^3\}$ is null with respect to the conformal structure  $[\tilde h]$. 
In fact, it has been shown \cite{HS-spinor} that $\cD$ induces a  \textrm{parallel spin tractor.}   Conversely,  it has been shown that  conformal structures of signature (3,2)  which are equipped with a parallel spin-tractor  arise from the construction of Theorem \ref{thm:Pawel-G2-confstr}.  The existence of such  parallel objects implies that  the conformal holonomy of the conformal structure is  a subgroup of $\mathbf{G}_2^*$ (see  \cite{HK-G2, HS-spinor} for more details.) This is an instance of an extension of   the  holonomy principle  in pseudo-Riemannian geometry, as explained in Remark \ref{rmk:pk-G-str}, to the context of Cartan geometries (see \cite{CS-Parabolic}.) 
\end{remark}


As was mentioned in  Remark~\ref{rmk:2approaches}, in order to find the Cartan connection \eqref{eq:G2-CarNur-Conn}, one can either work with a lifted coframe defined on the bundle $\cG$ or start with a choice of coframe on the manifold $Q$ and impose the structure equations to find the Cartan connection in terms of the coframe, which, if needed, can consequently be equivariantly lifted to $\cG.$ In this article we will follow the latter approach, as we did for the pKE structures.

\subsection{Null self-dual planes and a remarkable coincidence}
\label{sec:null-self-dual}

In this section we show the main result of this article by finding the Cartan connection of the twistor distribution on the space of self-dual null planes and  showing that the root type of its Cartan quartic is the same as the root type of  $Weyl^-.$ Furthermore, as a by-product of our construction, one obtains para-Sasaki-Einstein metrics in dimension five and 5-dimensional conformal structures with $\mathbf{SL}_3(\RR)$-holonomy.

\subsubsection{Twistor distribution on $\cO_+$}
\label{sec:2-3-5-dist-asd}

It was observed in \cite{AN-G2} that for a 4-dimensional conformal structure of split signature the circle bundles of self-dual and anti-self-dual null planes, $\cO_+$ and $\cO_-,$ are each equipped with a   naturally defined rank 2 distribution  which is  referred to as the \emph{twistor distribution.} The twistor distribution on $\cN_+$ and $\cN_-$ is (2,3,5)   in an open set $U\subset M$ if the self-dual and anti-self-dual Weyl curvature of the conformal structure is nowhere vanishing in $U$. 

Given a pKE structure, in order to define the twistor distribution on $\cO_+$ we make use of  the parametrization \eqref{eq:sd-asd-parametrized}, where $\mu\in \mathbb{R}\cup \{\infty\}$. As a result, on $\cO_+$ one obtains the \emph{0-adapted} coframe
\[\eta^1_0=\theta^1+\mu\theta^4,\quad \eta^2_0=\theta^2-\mu\theta^3,\quad \eta_0^3=\der\mu,\quad \eta^4_0=\theta^4,\quad\eta^5_0=\theta^3,\]
where the subscript $0$ refers to the adaptation with respect to  which the  2-distribution will be defined.

A coframe $(\eta^1,\dots,\eta^5)$ defines a (2,3,5) distribution $\cD=\Ker\{\eta^1,\eta^2,\eta^3\}$ if 
\begin{subequations}
  \label{eq:235cof1}
  \begin{align}
    \der\eta^1&\equiv \eta^3\dz\eta^4,\quad\mathrm{mod}\quad \{\eta^1,\eta^2\},\label{eq:235cof1-1}\\
    \der\eta^2&\equiv \eta^3\dz\eta^5,\quad\mathrm{mod}\quad \{\eta^1,\eta^2\},\label{eq:235cof1-2}\\
    \der\eta^3&\equiv\eta^4\dz\eta^5,\quad\mathrm{mod}\quad \{\eta^1,\eta^2,\eta^3\}.\label{eq:235cof1-3}
  \end{align}
\end{subequations}
To define the twistor distribution on $\cN$ we further adapt  the coframe $\{\eta^1_0,\dots,\eta^5_0\}$ so that it satisfies \eqref{eq:235cof1}.  Using the structure equations \eqref{pk}, one obtains
\[
  \begin{aligned}
    \der\eta^1_0&\equiv (\eta^3_0+\mu\Gamma^2_{~2}+\mu\Gamma^1_{~1})\dz\eta^4_0,
    \quad \mathrm{and}\quad
    \der\eta^2_0&\equiv -(\eta^3_0+\mu\Gamma^1_{~1}+\mu\Gamma^2_{~2})\dz\eta^5_0,
  \end{aligned}
\]
modulo $\{\eta^1_0,\eta^2_0\}.$
In order to obtain the relations \eqref{eq:235cof1-1},\eqref{eq:235cof1-2}   define the \emph{1-adapted} coframe as
\begin{equation}
  \label{eq:G2adaptedcof-1adap}
\eta^1_1=\theta^1+\mu\theta^4,\quad \eta^2_1=\theta^2-\mu\theta^3,\quad \eta_1^3=\der\mu+\mu\Gamma^1_{~1}+\mu\Gamma^2_{~2},\quad \eta^4_1=\theta^4,\quad\eta^5_1=-\theta^3
  \end{equation}
Using \eqref{pk}, the 1-adapted coframe satisfies
\begin{equation}
  \label{eq:G2adaptedcof-1ada-relations}
   \begin{aligned}
    \der\eta^1_1&\equiv \eta^3_1\dz\eta^4_1,\quad&&\mathrm{mod}\quad\{\eta^1_1,\eta^2_1\},\\
    \der\eta^2_1&\equiv \eta^3_1\dz\eta^5_1,\quad&&\mathrm{mod}\quad\{\eta^1_1,\eta^2_1\},\\
    \der\eta^3_1&\equiv -6\mu^2\,\Psi_2'\,\eta^4_1\dz\eta^5_1,\quad&&\mathrm{mod}\quad \{\eta^1_1,\eta^2_1,\eta^3_1\}.
  \end{aligned}
\end{equation}
Using the fact that $\Psi'_2\neq 0$ and $\der\Psi'_2=0,$ the 2-adapted coframe defined by
\begin{equation}
  \label{eq:G2adaptedcof}
  \begin{gathered}
    \textstyle{\eta^1_2=\frac{-1}{6\mu^2\,\Psi_2'}\left(\theta^1+\mu\theta^4\right),\qquad \eta^2_2=\frac{-1}{6\mu^2\,\Psi_2'}\left(\theta^2-\mu\theta^3\right)},\\ \textstyle{\eta_2^3=\frac{-1}{6\mu\,\Psi_2'}\left(\frac{\der\mu}{\mu}+\Gamma^1_{~1}+\Gamma^2_{~2}\right),\qquad \eta^4_2=\theta^4,\qquad\eta^5_2=-\theta^3}
  \end{gathered}
\end{equation}
  satisfies \eqref{eq:235cof1}, therefore, defines a (2,3,5)-distribution, 
  \begin{equation}
    \label{eq:TwistorDistribution}
\cD:=\mathrm{Ker}\{\eta^1_2,\eta^2_2,\eta^3_2\},    
  \end{equation}
  on $\cN_+$ for $\mu\in \RR^*$. It is straightforward to show that $\cD$ is invariant under the induced action of the  structure group $\glg_2(\RR)$.  We have the following theorem.
  \begin{theorem}
The  $\mathbb{S}^1$-bundle of self-dual null planes   of a pKE metric, $\cO_+,$  is naturally equipped with a rank 2 distribution $\cD.$   If the scalar curvature of the pKE metric is non-zero,  the twistor distribution $\cD$ is (2,3,5) on the  open subset  $\cC =\cO_+\backslash\{\cH,\bar\cH\}$ where $\{\cH,\bar\cH\}\subset\cO_+$ are the  $\pm 1$-eigenspaces of the para-complex structure $K,$ as defined in \eqref{eq:H_barH_null_distr}. In terms of  the affine parameter $\mu$ in \eqref{eq:sd-asd-parametrized}, the open subset $\cC$ corresponds to  $\mu\in \RR^*,$  and the  conformal structure of signature $(3,2)$ associated to the twistor distribution, as in Theorem \ref{thm:Pawel-G2-confstr},  is given by $[h]$ where 
\begin{equation}
  \label{eq:NurowskiMetric-Q-1}
  h=\textstyle{\frac{1}{6\mu^2\Psi'_2}(\theta^1\theta^3+\theta^2\theta^4)+\frac{1}{54\mu^2(\Psi'_2)^2}(\frac{\exd\mu}{\mu}+ \Gamma^1_{~1}+\Gamma^2_{~2})^2}.
\end{equation}
  \end{theorem}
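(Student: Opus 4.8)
The plan is to assemble the statement from the coframe adaptations already carried out, in three movements: establish that $\cD$ is a well-defined rank $2$ distribution on $\cO_+$, pin down exactly where it is $(2,3,5)$, and then compute the induced conformal class. For the first point I would note that the three $1$-forms of the $2$-adapted coframe \eqref{eq:G2adaptedcof} differ from $\{\eta^1_0,\eta^2_0,\eta^3_1\}$ only by the single nonzero factor $-1/(6\mu^2\Psi_2')$: since $\eta^3_1=\mu(\tfrac{\der\mu}{\mu}+\Gamma^1_{~1}+\Gamma^2_{~2})$, all three are scaled by the same function, so
\[\cD=\Ker\{\eta^1_2,\eta^2_2,\eta^3_2\}=\Ker\{\eta^1_0,\eta^2_0,\eta^3_1\}.\]
The latter description is regular on all of $\cO_+$ (the rescaling is needed only later, to put the structure equations in normal form). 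As $\cD$ is cut out by the tautological forms $\theta^i$, the Levi-Civita forms $\Gamma^a_{~b}$ and the fiber coordinate $\mu$, and is invariant under the induced $\glg_2(\RR)$-action (as recorded just before the statement), it descends to a genuine rank $2$ distribution on $\cO_+$, giving the naturality claim.

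Next I would read the weak derived flag directly off \eqref{eq:G2adaptedcof-1ada-relations}. Modulo the full ideal $\{\eta^1_1,\eta^2_1,\eta^3_1\}$ one has $\der\eta^1_1\equiv 0$ and $\der\eta^2_1\equiv 0$, while $\der\eta^3_1\equiv -6\mu^2\Psi_2'\,\eta^4_1\w\eta^5_1$. Hence $\{\eta^1_1,\eta^2_1,\eta^3_1\}$ is Frobenius-integrable — so $\cD$ is an integral foliation — precisely when $\mu^2\Psi_2'=0$; otherwise $\partial\cD=\Ker\{\eta^1_1,\eta^2_1\}$ has rank $3$, and the nondegeneracy of $\der\eta^1_1\equiv\eta^3_1\w\eta^4_1$, $\der\eta^2_1\equiv\eta^3_1\w\eta^5_1$ modulo $\{\eta^1_1,\eta^2_1\}$ forces $\partial^2\cD$ to have rank $5$, so $\cD$ is $(2,3,5)$. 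Since we assume $\Psi_2'=\mathrm{const}\neq 0$ throughout, the only degenerate values are $\mu=0$ and $\mu=\infty$.

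I would then identify these two values geometrically. By \eqref{eq:sd-asd-parametrized}, $\mu=0$ yields the self-dual null plane $\Ker\{\theta^1,\theta^2\}$ and $\mu=\infty$ yields $\Ker\{\theta^3,\theta^4\}$; comparing with \eqref{ac2} and \eqref{eq:H_barH_null_distr} these are exactly $\bar{\cH}$ and $\cH$, the $\mp1$-eigenspaces of $K$. Thus the non-$(2,3,5)$ locus is precisely $\{\cH,\bar{\cH}\}$, these are the two integral leaves, and $\cC=\cO_+\setminus\{\cH,\bar{\cH}\}$ is the affine chart $\mu\in\RR^*$, as asserted.

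Finally, for the conformal class I would substitute the $2$-adapted coframe \eqref{eq:G2adaptedcof} into Nurowski's representative \eqref{eq:Nurowski-Metric}, $h=\eta^1\eta^5-\eta^2\eta^4+\tfrac23\eta^3\eta^3$. The only thing to watch is that in $\eta^1_2\eta^5_2-\eta^2_2\eta^4_2$ the two cross terms $\pm\mu\,\theta^3\theta^4$ cancel by symmetry of the product, leaving $\tfrac{1}{6\mu^2\Psi_2'}(\theta^1\theta^3+\theta^2\theta^4)$, while $\tfrac23\eta^3_2\eta^3_2$ contributes $\tfrac23\cdot\tfrac{1}{36}=\tfrac{1}{54}$ of $\tfrac{1}{\mu^2(\Psi_2')^2}(\tfrac{\der\mu}{\mu}+\Gamma^1_{~1}+\Gamma^2_{~2})^2$; together these give the stated $h$. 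I do not expect a genuine obstacle: the substantive labor — the chain of adaptations producing \eqref{eq:G2adaptedcof-1ada-relations} and \eqref{eq:G2adaptedcof} out of the pKE structure equations \eqref{pk} — is already in place, so what remains is the bookkeeping of the derived flag and a short symmetric-product computation. The one conceptual step worth stating carefully, and the part that makes the excision meaningful rather than cosmetic, is the identification of the degenerate locus $\mu\in\{0,\infty\}$ with the two integrable $K$-eigenspaces $\cH,\bar{\cH}$.
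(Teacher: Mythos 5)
Your proposal is correct and follows essentially the same route as the paper's proof: read integrability at $\mu=0,\infty$ and the $(2,3,5)$ condition for $\mu\in\RR^*$ off the structure relations \eqref{eq:G2adaptedcof-1ada-relations}, identify the degenerate locus with $\{\cH,\bar\cH\}$ via \eqref{eq:sd-asd-parametrized}, and substitute the $2$-adapted coframe \eqref{eq:G2adaptedcof} into Nurowski's metric \eqref{eq:Nurowski-Metric} to obtain \eqref{eq:NurowskiMetric-Q-1}. Your additional remarks --- that $\cD$ is unchanged under the common rescaling by $-1/(6\mu^2\Psi_2')$ and hence regular on all of $\cO_+$, and the careful matching of $\mu=0$ with $\bar\cH$ and $\mu=\infty$ with $\cH$ --- merely make explicit points the paper records in the surrounding text.
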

  \begin{proof}
    Using the coframe \eqref{eq:G2adaptedcof-1adap}, the relations \eqref{eq:G2adaptedcof-1ada-relations} imply that the 2-plane field $\Ker\{\eta^1_1,\eta^2_1,\eta^3_1\}$ is integrable on the hypersurfaces corresponding to  $\mu=0$ and $\mu=\infty,$ which by \eqref{eq:sd-asd-parametrized} are given by $\mathrm{Ker}\{\alpha^1,\alpha^2\}$ and $\mathrm{Ker}\{\bar\alpha^1,\bar\alpha^2\}$ i.e. 2-plane fields $\cH$ and $\bar\cH$. For $\mu>0$ and $\mu<0$ the twistor distribution is (2,3,5) by \eqref{eq:TwistorDistribution}. Consequently, using the adapted coframe \eqref{eq:G2adaptedcof},  the 
metric \eqref{eq:Nurowski-Metric}     gives  \eqref{eq:NurowskiMetric-Q-1}.
  \end{proof}
  \begin{remark}  
The simple expression \eqref{eq:NurowskiMetric-Q-1} for the metric $h$ is the key to what follows in Theorem \ref{cor:MainResult}. For general twistor distributions the expression for $h$ involves the second jet of the self-dual Weyl curvature of $g,$ whose components are denoted by $\Psi'_i$'s.   However, in pKE metrics the only non-zero component of $Weyl^+$ is the constant $\Psi'_2.$ This point is explained further in  \ref{sec:2-3-5-ASD}, in particular Remark \ref{rmk:235-asd-weyl-bundle}.    
  \end{remark}
Using the theorem above, one obtains the following theorem which is the main result of this section.
\begin{theorem}\label{cor:MainResult}
  Given a pKE metric with non-zero scalar curvature,  the  Cartan quartic $C(z)$ for the twistor distribution $\cD\subset\rT\cN_+$ on  $\cC=\cN_+\backslash\{\cH,\bar\cH\}$ is a non-zero multiple of the  quartic representation of the anti-self-dual Weyl curvature $W(z)$. More  explicitly, one obtains 
\[C(z)=-6\mu^2\Psi_2'\left(\Psi_0+4\Psi_1z+6\Psi_2z^2+4\Psi_3z^3+\Psi_4z^4\right)=-6\mu^2\Psi'_2W(z).\]
In particular, the root types of the Cartan quartic and the anti-self-dual Weyl curvature coincide.
\end{theorem}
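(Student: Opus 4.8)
The plan is to compute the Cartan quartic of $\cD$ directly from data already assembled and to match its coefficients against the anti-self-dual Weyl quartic $W(z)$ of \eqref{eq:quartic2}. The most economical route uses the conformal reading of the Cartan quartic in Remark \ref{rmk:235QuarticRedone}: the associated signature $(3,2)$ conformal structure on $\cC$ is represented by the explicit metric $h$ of \eqref{eq:NurowskiMetric-Q-1}, and the five quartic coefficients are the Weyl-curvature components $a_0=W_{4114}$, $a_1=W_{4124}$, $a_2=W_{4125}$, $a_3=W_{4225}$, $a_4=W_{5225}$ of $h$ taken in the $2$-adapted coframe \eqref{eq:G2adaptedcof}. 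Thus it suffices to compute the conformal Weyl tensor of $h$ in that coframe and extract these five entries. (Equivalently, one may run Cartan's equivalence algorithm on \eqref{eq:G2adaptedcof}, prolonging to the $\fg_2^*$-valued connection \eqref{eq:G2-CarNur-Conn} and reading $a_0,\dots,a_4$ off its curvature; the two routes lead to the same answer.)

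First I would record the structure of $h$. Writing $\varpi=\tfrac{\exd\mu}{\mu}+\Gamma^1_{~1}+\Gamma^2_{~2}$, the metric \eqref{eq:NurowskiMetric-Q-1} is a Kaluza--Klein--type metric over the base pKE metric $g=2\theta^1\theta^3+2\theta^2\theta^4$, with single fibre direction $\varpi$ along the circle fibres of $\cN_+\to M$ and an overall conformal weight $\tfrac{1}{6\mu^2\Psi_2'}$. The decisive point is the curvature of this fibre connection: summing the last two equations of \eqref{pk} the $\Gamma^1_{~2}\w\Gamma^2_{~1}$ terms cancel and all $\Psi_i$ with $i\neq 2$ drop out, leaving $\exd(\Gamma^1_{~1}+\Gamma^2_{~2})=-3\Psi_2'\,\sigma^3_+$; since $\exd(\exd\mu/\mu)=0$ as well, one gets $\exd\varpi=-3\Psi_2'\,\sigma^3_+$, purely self-dual and involving only the constant $\Psi_2'$. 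Consequently the entire $\Psi_0,\dots,\Psi_4$ dependence of the curvature of $h$ can enter only through the base Riemann tensor of $g$, whose anti-self-dual part is exactly the quartic $\{\Psi_i\}$ and whose self-dual part is the single constant $\Psi_2'$ (Remark \ref{rmk:SD-Weyl-Curv}). This is precisely why $h$ is so simple here and the argument closes.

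Next I would compute the Weyl tensor of $h$. Using \eqref{pk} to evaluate the $\exd\eta^i_2$ and the full Riemann curvature of $g$ encoded there, one obtains the Levi-Civita connection and curvature of $h$ in the frame dual to \eqref{eq:G2adaptedcof}; the constancy $\exd\Psi_2'=0$ together with the Bianchi identities \eqref{bi} keeps every expression polynomial and eliminates all coframe derivatives of the $\Psi_i$. The components $W_{4114},\dots,W_{5225}$ involve only the directions $\eta^4_2=\theta^4$, $\eta^5_2=-\theta^3$ (spanning $\bar\cH$) together with $\eta^1_2,\eta^2_2$, and I expect them to come out as $a_i=-6\mu^2\Psi_2'\,\Psi_i$ for $i=0,\dots,4$, the uniform nonzero factor $-6\mu^2\Psi_2'$ being produced by the conformal weight of $h$ and the normalization of \eqref{eq:G2adaptedcof}. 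Substituting into \eqref{eq:CartanQuartic-C} and using the identification of the quartic variable $z$ with the anti-self-dual parameter $\lambda$ (Remark \ref{rmk:natural-identification}) then yields $C(z)=a_0+4a_1z+6a_2z^2+4a_3z^3+a_4z^4=-6\mu^2\Psi_2'\,W(z)$. Since $\Psi_2'\neq 0$ and $\mu\in\RR^*$ on $\cC$, this factor is nowhere zero, so $C(z)$ and $W(z)$ have identical root types.

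The main obstacle is the sheer size of the curvature computation: producing the five distinguished Weyl components of a five-dimensional metric (or, equivalently, completing the Cartan normalization to \eqref{eq:G2-CarNur-Conn}) is a long calculation. The conceptual content, however, is light: one must check that the $\Psi_i$ enter the relevant Weyl components \emph{linearly and with a single common coefficient}, with no admixture of self-dual data beyond the overall factor $\Psi_2'$ and no surviving derivative terms. This is exactly what the simple form of $h$---a consequence of $Weyl^+$ reducing to the single constant $\Psi_2'$ for pKE metrics---guarantees, and in practice the final identity is confirmed with the computer-algebra packages mentioned in the introduction.
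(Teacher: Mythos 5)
Your proposal is correct, and it reaches the identity by a genuinely different route from the paper's. The paper's proof works directly at the level of the $\fg_2^*$ Cartan connection: starting from the 2-adapted coframe \eqref{eq:G2adaptedcof} it writes out the explicit connection \eqref{eq:G2CartanConn-SD-coframe} (all nine $\zeta_i$'s), computes $K_{\omega_{\bG^*_2}}=\exd\omega_{\bG^*_2}+\omega_{\bG^*_2}\w\omega_{\bG^*_2}$, and reads off $a_i=-6\mu^2\Psi_2'\Psi_i$ from the curvature, with the identification of the two quartics handled by Remark \ref{rmk:natural-identification} exactly as you invoke it. You instead bypass the $\rG_2^*$ prolongation by using the conformal dictionary of Remark \ref{rmk:235QuarticRedone}: the $a_i$ are five Weyl components of the explicit representative $h$ of \eqref{eq:NurowskiMetric-Q-1}, so the whole problem reduces to a Levi-Civita/Weyl computation for a single 5-dimensional metric. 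Your structural observation is also correct and is the real mechanism behind the theorem: summing the last two equations of \eqref{pk}, the $\Gamma^1_{~2}\w\Gamma^2_{~1}$ terms cancel and one gets $\exd(\Gamma^1_{~1}+\Gamma^2_{~2})=-3\Psi_2'\,\sigma^3_+$ — note that \emph{all} unprimed $\Psi_i$ cancel here, including $\Psi_2$, so your phrase ``all $\Psi_i$ with $i\neq 2$ drop out'' should read ``all unprimed $\Psi_i$ drop out'' — hence the fibre connection of your Kaluza--Klein ansatz has constant, purely self-dual curvature, and the $\Psi_i$ can only enter the five distinguished Weyl components of $h$ linearly, through the anti-self-dual block of the base curvature \eqref{eq:pKE-curv}. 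As to what each approach buys: the paper's computation additionally delivers the explicit $\sla_3(\bbR)$-valued form of the connection, which is reused afterwards for the conformal-holonomy observation and the para-Sasaki--Einstein construction of \ref{sec:para-sasaki-einstein}, whereas your route is leaner and makes transparent \emph{why} the proportionality holds for pKE metrics but fails on $\cN_-$ (compare Remark \ref{rmk:235-asd-weyl-bundle}). One caveat: like the paper, you defer the final coefficient check — that the common factor is exactly $-6\mu^2\Psi_2'$ with no residual terms — to machine computation, so your proposal matches, but does not exceed, the level of rigor of the published proof.
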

\begin{remark}\label{rmk:natural-identification}
We point out that there is an underlying bundle map behind Theorem \ref{cor:MainResult} which allows one to express $C(z)$ as  a non-zero multiple of $W(z)$. As discussed in \ref{sec:cart-penr-petr}, $W(z)$ is defined on $\cN_-$ and, by Remark \ref{rmk:235QuarticRedone},  $C(z)$ is defined on $\cZ.$ However, using the twistorial nature of $\cD$ one can naturally identify $\cN_-$ and $\cZ$ in the following way. First note that given a pKE structure, the derived system  $\partial\cD:=\Ker\{\eta^1_2,\eta^2_2\}$ is equipped with a splitting $\cD\oplus\langle\frac{\partial}{\partial\eta^3_2}\rangle$  which is invariant under the induced action of $\glg_2(\RR).$  Therefore, by Remark \ref{rmk:235QuarticRedone}, the circle bundle $\cZ,$ as defined in \eqref{eq:Zbundle}, is well-defined on $\cC$. Consequently, via the bundle map $\nu_+\colon\cC\to M,$ it is elementary to check that  $\exd\nu_+(\cZ_q)=\cN_{p-},$ where $p=\nu_+(q),$ for all $q\in\cZ,$ using  definitions   \eqref{eq:sd-asd-parametrized} and  \eqref{eq:Znull}. Therefore, the property of $W(z)$ and $C(z)$ being proportional everywhere is well-defined.     
\end{remark}
\begin{proof}[Proof of Theorem \ref{cor:MainResult}]
Having the twistor distribution $\cD$ defined by \eqref{eq:TwistorDistribution} on $\cC$, it is straightforward to find the explicit Cartan connection \eqref{eq:G2-CarNur-Conn} for $\cD$ starting with the 2-adapted coframe \eqref{eq:G2adaptedcof}. Consequently, one obtains that the Cartan connection \eqref{eq:G2-CarNur-Conn} for $\cD$  is given by
\begin{equation}
  \label{eq:G2CartanConn-SD-coframe}
  \begin{aligned}
   \eta^1&= \textstyle{\frac{-1}{6\mu^2\,\Psi_2'}\left(\theta^1+\mu\theta^4\right)},\qquad &&\eta^2=\textstyle{\frac{-1}{6\mu^2\,\Psi_2'}\left(\theta^2-\mu\theta^3\right)},\\ \eta^3&=\textstyle{\frac{-1}{6\mu^2\,\Psi_2'}\left(\der\mu+\mu\Gamma^1_{~1}+\mu\Gamma^2_{~2}\right)},\qquad &&\eta^4=\theta^4,\qquad&&\eta^5=-\theta^3,\\
       \zeta_1 &= \textstyle{ \frac{1}{3}(2\Gamma^1_{~1}-\Gamma^2_{~2}+2\frac{\der\mu}{\mu})},\quad &&\zeta_2 =\Gamma^1_{~2},\qquad  &&\zeta_3=\Gamma^2_{~1},\\
      \zeta_4 &=  \textstyle{\frac{1}{3}(2\Gamma^2_{~2}-\Gamma^1_{~1}+2\frac{\der\mu}{\mu}),}
    &&\zeta_5= -3\mu\Psi'_2\theta^3,\qquad &&\zeta_6 = -3\mu\Psi'_2\theta^4,\\
    \zeta_7& = 0,\qquad&&\zeta_8= 6\mu^2(\Psi'_2)^2\theta^3,\qquad &&\zeta_9 = 6\mu^2(\Psi'_2)^2\theta^4.
  \end{aligned}
\end{equation} 
 Using the Cartan connection, one  computes the curvature \[K_{\bG^*_2}=\exd\omega_{\bG^*_2}+\omega_{\bG^*_2}\w\omega_{\bG^*_2}.\]
Therefore, the coefficients $a_0,\dots,a_4$ of the Cartan quartic \eqref{eq:CartanQuartic-C} are found to be 
\begin{equation}
  \label{eq:CartanQuartic-SD}
  a_i=-6\mu^2\Psi'_2 \Psi_i,\quad i=0,\dots,4.
\end{equation}
 By the formulas for $C(z)$ and $W(z),$ as given in \eqref{eq:CartanQuartic-SD} and \eqref{eq:quartic2}, it follows that 
\[C(z)=-6\mu^2\Psi'_2W(z).\]
\end{proof}
 \begin{remark}
As was mentioned in the introduction, it was not known whether (2,3,5) distributions that arise as a twistor distribution of split signature metrics can have any fixed (Petrov) root type. Theorem  \ref{cor:MainResult} shows that any root type can be achieved via pKE metrics  with non-zero scalar curvature for which $Weyl^-$ has the same root type. Moreover,   our examples in \ref{sec:cart-reduct-symm} provide explicit metrics whose twistor distributions have Cartan quartics of  real root type $II,III,N,D$ and $O.$
\end{remark}

\subsubsection{An invariant description}\label{sec:einst-metr-mathbfsl3}
To have an invariant understanding of $\cC=\cN_+\backslash\{\cH,\bar\cH\},$ we define another space equipped with a rank 2 distribution which will be shown to be isomorphic to the twistor distribution on $\cC.$  Consider the principal $\mathbf{GL}_2(\RR)$-bundle $\cF^8$ of adapted null coframes for pKE metrics, equipped with the Cartan connection $\cA,$ as in \eqref{eq:SL3-pke-conn-A}. Using the structure equations \eqref{pk}, one can define the 5-dimensional leaf space, $\cQ,$ of the Pfaffian system 
\[I=\{\theta^1,\theta^2,\theta^3,\theta^4,\Gamma^1_{~1}+\Gamma^2_{~2}\}.\]
Note that by the structure equations \eqref{pk} the Pfaffian system $I$ is integrable and its 5-dimensional leaf space $\cQ$ is the quotient of $\cF^8$ by the orbits of  $\mathbf{SL}_2(\RR)\subset \mathbf{GL}_2(\RR),$ which would give an $\RR^*$-bundle over $M$. As a result,     $\cQ$ is a cone.  Similar to our previous discussion, if $\Psi'_2\neq 0,$ after necessary adaptation, one obtains  the following   coframe on $\cQ$  
\begin{equation}
  \label{eq:G2adaptedcof-Q}
  \begin{gathered}
    \textstyle{\eta^1=\frac{-1}{6\,\Psi_2'}\left(\theta^1+\theta^4\right),\quad \eta^2=\frac{-1}{6\,\Psi_2'}\left(\theta^2-\theta^3\right)},\\ \textstyle{\eta^3=\frac{-1}{6\,\Psi_2'}\left(\Gamma^1_{~1}+\Gamma^2_{~2}\right),\quad
 \eta^4=\theta^4,\quad\eta^5=-\theta^3},
  \end{gathered} 
\end{equation}    
which defines a rank 2 distribution  $\tilde \cD\subset\rT\cQ,$ by  $\tilde \cD=\Ker\{\eta^1,\eta^2,\eta^3\}.$ The distribution $\tilde \cD$ is invariant under  the induced action of the structure group  $\mathbf{GL}_2(\RR).$ 

The Cartan connection for the $\tilde \cD$ is obtainable from  \eqref{eq:G2CartanConn-SD-coframe} by setting $\mu=1.$    In the language of parabolic geometries \cite{CS-Parabolic}, this is the explicit form of the extension functor from a pKE structure to the corresponding (2,3,5)-distribution. 

To relate our discussion to  $\cC=\cN_+\backslash\{\cH,\bar\cH\},$  note that the structure group of a pKE metric can be decomposed as 
\[\mathbf{GL}_2(\RR)=\mathbf{SL}_2(\RR)\times\RR^*.\]
The $\RR^*$-action  transforms the pKE coframe in the following way
\[\theta^1\rightarrow s\theta^1,\quad \theta^2\rightarrow s\theta^2,\quad\theta^3\rightarrow \textstyle{\frac{1}{s}\theta^3},\quad\textstyle{\theta^1\rightarrow \frac{1}{s}\theta^4}, \] 
by setting $a_{11}=a_{22}=s$ and $a_{12}=a_{21}=0$ in the structure group \eqref{ga2}. This action results in the following change of coframe \eqref{eq:G2adaptedcof} on  $\cC$
\begin{equation}
  \label{eq:G2adaptedcof-ScaleBundle}
  \begin{gathered}
    \textstyle{\eta^1_2=\frac{-1}{6\mu^2\,\Psi_2'}\left(s\theta^1+\frac{\mu}{s}\theta^4\right),\qquad \eta^2_2=\frac{-1}{6\mu^2\,\Psi_2'}\left(s\theta^2-\frac{\mu}{s}\theta^3\right)},\\ \textstyle{\eta_2^3=\frac{-1}{6\mu\,\Psi_2'}\left(\frac{\der\mu}{\mu}-\frac{\exd s}{s}+\Gamma^1_{~1}+\Gamma^2_{~2}\right),\qquad \eta^4_2=\frac{1}{s}\theta^4,\qquad\eta^5_2=-\frac{1}{s}\theta^3}. 
  \end{gathered}
\end{equation}
  Using the expressions \eqref{eq:G2adaptedcof-ScaleBundle}  
one  finds the bundle isomorphism $\tau\colon \cC\to\cQ$ in terms of the fiber coordinates  $s$ and $\mu,$ given by $\tau(\mu)=\frac{1}{s^2}$ for $s>0$ and $\tau(\mu)=-\frac{1}{s^2}$ for $s<0,$ via which $\tilde\cD=\tau_{*}\cD.$ This gives the equivalence of the (2,3,5) distributions induced on $\cC$ and $\cQ$
  
\subsubsection{para-Sasaki-Einstein structures and $\mathbf{SL}_3(\RR)$ holonomy}
\label{sec:para-sasaki-einstein}
To state the last result of this section we give the following definition of a \emph{para-Sasaki-Einstein structure}.
 \begin{definition}\label{def:para-sasaki-einstein-1}
A para-Sasaki-Einstein structure on $\cQ$ is $(\phi,\xi,\beta,h),$ where $\phi:\rT\cQ\to\rT\cQ$ is an endomorphism, $\xi$ is  a vector field, $\beta$ is a 1-form, and $h$ is a split signature metric. The quadruple $(\phi,\xi,\beta,h)$ satisfies
\begin{equation}
  \label{eq:Sasaki-Conditions1}
  \phi^2=\mathrm{Id}-\beta\otimes\xi,\quad \beta(\xi)=1,\quad \phi(\xi)=0,\quad \beta\circ \phi=0,\quad h(\xi,.)=\beta.
\end{equation}
Additionally,  the $\pm 1$-eigenspaces of $\phi$ define rank 2 integrable  sub-distributions of $\mathrm{Ker}(\beta),$ the metric $h$ is Einstein, and
 the compatibility conditions
\begin{equation}
  \label{eq:sasaki-Cond2}
  \begin{aligned}
  h(\phi X,\phi Y)&=-h(X,Y)+ \beta(X)\beta(Y),\\
  \exd \beta(X,Y)&=h(\phi X,Y)\qquad &\forall X,Y\in\rT\cQ
  \end{aligned}
\end{equation}
hold.   
 \end{definition}
We have the following proposition.
\begin{proposition}
  The cone $\cQ$ is equipped with a para-Sasaki-Einstein structure arising from the para-K\"ahler-Einstein structure on $M.$ 
\end{proposition}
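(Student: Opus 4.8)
The plan is to exhibit the quadruple $(\phi,\xi,\beta,h)$ explicitly in the coframe \eqref{eq:G2adaptedcof-Q} on $\cQ$ and to verify the axioms of Definition \ref{def:para-sasaki-einstein-1} directly from the structure equations \eqref{pk}. Writing $\gamma:=\Gamma^1_{~1}+\Gamma^2_{~2}$ for the trace part of the $\glg_2(\RR)$-connection, so that $\eta^3=\tfrac{-1}{6\Psi_2'}\gamma$, the projection $\pi\colon\cQ\to M$ realizes $\cQ$ as the $\RR^*$-bundle whose fibres are the orbits of $\frac{\partial}{\partial\eta^3}$. I would take $h$ to be a constant multiple of the Nurowski representative \eqref{eq:NurowskiMetric-Q-1} at $\mu=1$, which equals $\tfrac{1}{12\Psi_2'}\pi^*g+\tfrac{1}{54(\Psi_2')^2}\gamma^2$; set $\beta$ proportional to $\eta^3$ and $\xi$ proportional to $\frac{\partial}{\partial\eta^3}$, with the constants fixed by $\beta(\xi)=1$ and $h(\xi,\cdot)=\beta$; and let $\phi$ be zero on $\xi$ and the horizontal lift of the para-complex structure $K$ on the rank $4$ bundle $\Ker\beta\cong\pi^*\rT M$. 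The single computation driving the whole argument is the curvature identity
\[
\exd\gamma=-3\Psi_2'\,\rho ,
\]
obtained by adding the expressions for $\exd\Gamma^1_{~1}$ and $\exd\Gamma^2_{~2}$ in \eqref{pk}: the $\Gamma^1_{~2}\w\Gamma^2_{~1}$ terms cancel, all $Weyl^-$ terms cancel, and there remains $-3\Psi_2'(\al^1\w\bal^1+\al^2\w\bal^2)=-3\Psi_2'\rho$ by \eqref{ac3}. This is precisely the para-analogue of the Boothby--Wang relation, with the proportionality constant equal to the Einstein constant $\Lambda=-3\Psi_2'$.

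The algebraic relations \eqref{eq:Sasaki-Conditions1} are then pure linear algebra in the coframe. On $\Ker\beta$ the operator $\phi$ is $K$, so $\phi^2=\mathrm{Id}$ there, while $\phi\xi=0$; combined with $\beta(\xi)=1$ this yields $\phi^2=\mathrm{Id}-\beta\otimes\xi$, $\phi(\xi)=0$ and $\beta\circ\phi=0$. The metric-compatibility identity in \eqref{eq:sasaki-Cond2} reduces on $\Ker\beta$ to $g(KX,KY)=-g(X,Y)$, the defining identity of an almost para-Hermitian $K$, and is trivial when either slot is $\xi$. For the contact identity $\exd\beta(X,Y)=h(\phi X,Y)$ I would use the displayed identity together with $\rho(X,Y)=g(KX,Y)$ from \eqref{eq:2form1}: on $\Ker\beta$ both sides are constant multiples of $\rho(X,Y)$, and matching these two constants is exactly what fixes the constant rescaling of the representative $h$ mentioned above; when $X=\xi$ both sides vanish, since $\rho$ is a $\pi$-pullback gives $\frac{\partial}{\partial\eta^3}\im\rho=0$, hence $\xi\im\exd\beta=0$, while $\phi\xi=0$.

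For the integrability of the $\pm1$-eigenspaces of $\phi$ I would note that these are the horizontal lifts of the $K$-eigenspaces $\cH,\bar{\cH}$, which are integrable on $M$ by the para-K\"ahler condition. A horizontal lift of an integrable distribution is Frobenius-integrable in the total space exactly when the connection curvature $\exd\gamma$ annihilates it. Since $\cH,\bar{\cH}$ are totally null for $g$ and $\rho(X,Y)=g(KX,Y)=\pm g(X,Y)$ on them, the form $\rho$ vanishes on $\cH\times\cH$ and on $\bar{\cH}\times\bar{\cH}$; by the identity $\exd\gamma=-3\Psi_2'\rho$ the curvature $\exd\gamma$ then annihilates each lift, which is exactly the required Frobenius condition.

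The main obstacle is the Einstein condition on $h$. Here I would compute the Levi-Civita connection and Ricci tensor of $h=\tfrac{1}{12\Psi_2'}\pi^*g+\tfrac{1}{54(\Psi_2')^2}\gamma^2$ directly from the structure equations induced on $\cQ$ by \eqref{pk}; because $(M,g)$ is Einstein with $\Ric_g=\Lambda g$ and the fibre curvature $\exd\gamma=\Lambda\rho$ is a parallel $(1,1)$-form, the submersion correction terms should be constant multiples of $h$, and I expect the trace-free Ricci of $h$ to vanish identically, giving $\Ric_h=\lambda h$ for a constant $\lambda$. Alternatively, and more conceptually, one may invoke the reduction recorded after Theorem \ref{cor:MainResult}: the conformal class $[h]$ has conformal holonomy contained in $\mathbf{SL}_3(\RR)\subset\mathbf{G}_2^*\subset\mathbf{SO}_{4,3}$ \cite{SW-G2}, the $\mathbf{SL}_3(\RR)$-fixed vector in the standard representation of $\mathbf{SO}_{4,3}$ is a parallel standard tractor, and a parallel standard tractor is equivalent to an Einstein scale in $[h]$; identifying that scale with the representative $h$ above completes the verification.
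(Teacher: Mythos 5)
Your proposal is correct and, at the top level, follows the same strategy as the paper: both exhibit the quadruple $(\phi,\xi,\beta,h)$ explicitly in the adapted coframe \eqref{eq:G2adaptedcof-Q}, with your $h=\tfrac{1}{12\Psi_2'}\pi^*g+\tfrac{1}{54(\Psi_2')^2}\gamma^2$ being exactly the paper's $h=\eta^1\eta^5-\eta^2\eta^4+\tfrac23(\eta^3)^2$ up to the constant rescaling your contact normalization forces (constant rescalings are immaterial for the Einstein property, only shifting the Einstein constant from the paper's $-24(\Psi_2')^2$). Where you genuinely differ is in the verification. The paper checks none of the axioms \eqref{eq:Sasaki-Conditions1}--\eqref{eq:sasaki-Cond2} explicitly; it simply displays the quadruple and the full Levi-Civita connection matrix \eqref{eq:so5-LC} and reads off the Einstein condition. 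You instead organize everything around the single identity $\exd(\Gamma^1_{~1}+\Gamma^2_{~2})=-3\Psi_2'\rho$, which is correct (adding the two relevant lines of \eqref{pk}, the $\Gamma^1_{~2}\w\Gamma^2_{~1}$ and all $\Psi_i$ terms cancel), and this buys you the contact condition with the constants pinned down, a conceptual proof of the eigenspace integrability (nullity of $\cH,\bar\cH$ forces $\rho$, hence the connection curvature, to vanish on the horizontal lifts), and an alternative route to the Einstein condition via the $\mathbf{SL}_3(\RR)$ conformal-holonomy reduction. For the Einstein step your direct O'Neill-type computation is the honest route --- the holonomy argument, as you yourself flag, only produces \emph{an} Einstein scale in $[h]$ and still requires identifying it with your representative --- and that computation is precisely what the paper records in \eqref{eq:so5-LC}, so your ``I expect the trace-free Ricci to vanish'' is a deferred routine calculation rather than a gap.

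One point where your version is actually sharper than the paper's displayed formulas: you take $\phi$ to be the horizontal lift of $K$ (and zero on $\xi$), whereas the paper's $\phi=\eta^1\otimes\tfrac{\partial}{\partial\eta^1}+\eta^2\otimes\tfrac{\partial}{\partial\eta^2}-\eta^4\otimes\tfrac{\partial}{\partial\eta^4}-\eta^5\otimes\tfrac{\partial}{\partial\eta^5}$ has $(-1)$-eigenspace $\Span\{\tfrac{\partial}{\partial\eta^4},\tfrac{\partial}{\partial\eta^5}\}=\Ker\{\eta^1,\eta^2,\eta^3\}$, i.e.\ the twistor distribution $\tilde\cD$ itself, which is $(2,3,5)$ and hence non-integrable whenever $\Psi_2'\neq0$ --- in conflict with Definition \ref{def:para-sasaki-einstein-1}. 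Since $\eta^1,\eta^2$ mix the $\theta^a$ with $\eta^4,\eta^5$, the two endomorphisms differ by nilpotent correction terms, and your choice (whose eigenspaces are the lifts of $\cH,\bar\cH$) is the one forced by the integrability axiom; your Boothby--Wang identity then verifies it cleanly.
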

\begin{proof}
Using the coframe \eqref{eq:G2adaptedcof-Q}, the claimed para-Sasakian structure on $\cQ$ is given by 
\begin{equation*}
  \begin{aligned}
\phi&=\textstyle{\eta^1\otimes\frac{\partial}{\partial\eta^1}+\eta^2\otimes\frac{\partial}{\partial\eta^2}-\eta^4\otimes\frac{\partial}{\partial\eta^4}-\eta^5\otimes\frac{\partial}{\partial\eta^5}}\\
    \xi&=\textstyle{\sqrt{\frac 3 2}\frac{\partial}{\partial\eta^3}}\\
    \beta&=\textstyle{\sqrt{\frac 2 3}\eta^3}\\
    h&=\eta^1\eta^5-\eta^2\eta^4+\textstyle{\frac 23}(\eta^3)^2
  \end{aligned}
\end{equation*}
  The Levi-Civita connection of $h$ with respect to the coframe \eqref{eq:G2adaptedcof-Q} is 
 \begin{equation}
   \label{eq:so5-LC}
    \def\arraystretch{1.3}
   \begin{pmatrix}
     -\Gamma^2_{~2}-4\Psi'_2\eta^3& \Gamma^1_{~2}& 2\Psi'_2\eta^1+\frac{2}{3}\eta^4& -\frac 13\eta^3&  0\\
\Gamma^2_{~1} & \Gamma^2_{~2}+2\Psi'_2\eta^3&  2\Psi'_2\eta^2+\frac 23\eta^5 & 0 & -\frac 13\eta^3\\
\frac 32\Psi'_2\eta^5 & -\frac 32\Psi'_2\eta^4 & 0& \frac 32\Psi'_2\eta^2+\frac 12\eta^5 & -\frac 32\Psi'_2\eta^1-\frac 12\eta^4\\
0 & 0 & -2\Psi_2'\eta^4 & -2\Psi'_2\eta^3-\Gamma^2_{~2} &\Gamma^1_{~2}\\
0& 0& -2\Psi'_2\eta^5 & \Gamma^2_{~1} & 4\Psi'_2\eta^3+\Gamma^2_{~2}
\end{pmatrix}
 \end{equation}
which is   $\mathfrak{so}_{3,2}$-valued  with respect to the matrix representation of the metric $h$
\[\begin{pmatrix}
  0 & 0 & 0 & 0 & 1\\
 0 & 0 & 0 & -1 & 0\\
 0 & 0 & \frac 43 & 0 & 0\\
0 & -1 & 0&0&0\\
1 &0&0&0&0
\end{pmatrix}.
\] 
It follows that the metric $h$ on $\cQ$ is Einstein whose   Einstein constant is   $-24(\Psi'_2)^2.$
 \end{proof}

  \begin{remark}
The proposition above is an analogue of the well-known construction of Sasakian structures from   K\"ahler metrics \cite{GN-sasaki}. Moreover, by the same analogy,  following \cite{GN-sasaki}, it can be  checked that  using the potential function of a para-K\"ahler-Einstein metric and the coordinate $s$ on the cone, as introduced in \ref{sec:einst-metr-mathbfsl3}, one obtains a  potential function for the resulting para-Sasaki-Einstein metric on $\cQ.$ 
\end{remark}

Finally one can directly verify the well-known relation between   the existence of an Einstein representative in  the conformal structure $[h]$ of the metric \eqref{eq:NurowskiMetric-Q-1} defined by  the (2,3,5) distribution, and  the holonomy reduction of the Cartan conformal connection.  This relation goes back to  works of many people, including \cite{Sasaki-holonomy} (see Section 5.2 in  \cite{CS-Parabolic}  for an overview.)  One can check that the Cartan connection \eqref{eq:G2-CarNur-Conn} given by \eqref{eq:G2CartanConn-SD-coframe} takes value in $\mathfrak{sl}_3(\RR)\subset \mathfrak{g}_2\subset \mathfrak{so}_{4,3}$ for any non-zero value of $\Psi'_2.$ This implies that the conformal holonomy of the conformal structure $[h]$ is reduced and the conformal class $[h]$ must contain at least one Einstein metric.  We refer the reader to \cite{SW-G2} for a more detailed study of conformally Einstein structures arising from (2,3,5) distributions and their corresponding conformal holonomy reductions.

\subsection{Null anti-self-dual planes}  
\label{sec:2-3-5-ASD}
The purpose of this short section is to justify the remarkable nature of Theorem \ref{cor:MainResult} by examining the twistor distribution induced on the $\mathbb{S}^1$-bundle of anti-self-dual null planes of a pKE metric, $\cO_-.$ 
 Following the description in \ref{sec:einst-metr-mathbfsl3}, one can identify $\cO_-$ as the leaf space of the Pfaffian system 
\[I_{asd}=\{\theta^1,\theta^2,\theta^3,\theta^4,\Gamma^1_{~2}\}.\]
Consequently, if $Weyl^-\neq 0,$ one can follow the discussion in \ref{sec:2-3-5-dist-asd} to find an adapted coframe. 
An adapted coframe for the twistor distribution satisfying \eqref{eq:235cof1} on the open set of $\cO_-$ where $\Psi_4\neq 0$ is given by 
\begin{equation*}
  \label{eq:G2-CartanConn-ASD-coframe}
      \begin{aligned}
    \eta^1&=\textstyle{\frac{1}{\Psi_4}(\alpha^1+\bar\alpha^2)\quad  
     \eta^2=\frac{1}{\Psi_4}\bar\alpha^2,\quad\eta^3=\frac{1}{\Psi_4}\Gamma^1_2},\\
   \eta^4&=\textstyle{\bar\alpha^1-\alpha^2-\frac{30\Psi_3\Psi_4^2+3\Psi_4\Psi_{422}-\Psi_4\Psi_{433}- 5\Psi_{42}^2+4\Psi_{43}^2}{30\Psi_4^2}\alpha^1} \\
  &\textstyle{- \frac{15\Psi_3\Psi_4^2-3\Psi_4\Psi_{432}-3\Psi_4\Psi_{433}- 5\Psi_{42}\Psi_{43}+5\Psi_{43}^2}{15\Psi_4^2}\bar\alpha^2+\frac{\Psi_{42}+\Psi_{43}}{3\Psi_4^2}\Gamma^1_2 }\\
 \eta^5&=\textstyle{\bar\alpha^1-\frac{3\Psi_4\Psi_{422}-5\Psi_{42}^2}{30\Psi_4^3}\alpha^1-\frac{\Psi_{42}}{3\Psi_{4}^2}\Gamma^1_2 }\\
 &\textstyle{-\frac{30\Psi_3\Psi_4^2-6\Psi_4\Psi_{432}-3\Psi_4\Psi_{433}+10\Psi_{43}\Psi_{42}+5\Psi_{43}^2}{30\Psi_4^3}\bar\alpha^2}.
\end{aligned}
\end{equation*}
Note that if $Weyl^-\neq 0,$ one can always find a coframe with respect to which $\Psi_4\neq 0.$  

Consequently, the expressions for the coefficients of the Cartan quartic,  $a_0,\dots,a_4$, are found to be very complicated and depend on the 4th jet of  $\Psi_i$'s. For instance, in the coframe introduced above, one obtains
\begin{equation}
  \label{eq:CartanQuartic-ASD}
  \begin{aligned}
  a_0&=\textstyle{ -\frac{10\Psi_4^3 \Psi_{42222}-70\Psi_4^2\Psi_{42}\Psi_{4222}- 49\Psi_4^2\Psi_{422}^2+ 280\Psi_4\Psi_{42}^2\Psi_{422}- 175\Psi_{42}^4} {100\Psi_4^4} }.
  \end{aligned}
\end{equation} 
The equation arising from the vanishing of $a_0$  is referred to as \emph{Noth's equation}  and its general solution can be presented by a certain family of rational sextics \cite{AN-Noth, DS-Noth}.  
 
We could not identify any relation between the root types of $Weyl^\pm$ and the root type of the Cartan quartic of the twistor distribution on $\cN_-,$ with the exception of the homogeneous pKE metrics of Petrov type $D^r$ where they coincide.  
\begin{remark}\label{rmk:235-asd-weyl-bundle}  
  Following our approach in this section, one can find the Cartan quartic for the twistor distribution  on  $\cN_-$ and $\cN_+$ for any conformal structure of split signature  provided that  $Weyl^-\neq 0$ and $Weyl^+\neq 0,$ respectively. The computations are extremely tedious. Restricting to $\cN_-$, if $Weyl^-\neq 0,$ one obtains that   the expression for the metric $h,$  as defined in \eqref{eq:Nurowski-Metric}, involves the second jets of $Weyl^-.$ Furthermore, in an appropriate coframe,   $a_i$'s can be expressed in terms of the fourth jets of $Weyl^-$ and zeroth jets of $Weyl^+$ at each point. For instance, consider conformal structures  $[g],$ where $g=2\theta^1\theta^3+2\theta^2\theta^4,$  with $Weyl^-\neq 0$ and denote the components of $Weyl^-$ and $Weyl^+$ by $\Psi_i$'s and $\Psi'_i$'s respectively, as we did in \ref{sec:structure-equations}.
 Then on the open subset of $\cN_-$ where $\Psi_4\neq 0,$ there is an adapted coframe for the twistor distribution with respect to which  
 \begin{equation*} 
 a_0=\textstyle{ -\Psi'_0-\frac{10\Psi_4^3 \Psi_{42222}-70\Psi_4^2\Psi_{42}\Psi_{4222}- 49\Psi_4^2\Psi_{422}^2+ 280\Psi_4\Psi_{42}^2\Psi_{422}- 175\Psi_{42}^4} {100\Psi_4^4} }. 
\end{equation*}
Comparing the expression above to  \ref{eq:CartanQuartic-ASD}, it is clear that the case of twistor distribution induced on $\cN_-$ for pKE metrics is nearly as complicated as in the case of  general conformal structures. As a result, one cannot expect any relation between the root types of $Weyl^\pm$ for a conformal structure and the Cartan quartic of the twistor distribution on $\cN_\pm.$   
\end{remark}

 \appendix
\setcounter{equation}{0}
\setcounter{subsection}{0}
 \setcounter{theorem}{0}

\section*{Appendix}
\renewcommand{\theequation}{A.\arabic{equation}}
\renewcommand{\thesection}{A}

The connection 1-form referred  to in \ref{sec:pk-struct-coord} are given by 
\begin{equation}
  \label{eq:Conn1Form-LocCoord}
  \begin{aligned}
    \Gamma^a{}_a=&\frac{V_{aay}V_{bx}-V_{aax}V_{by}}{V_{ay}V_{bx}-V_{ax}V_{by}}\der a+\frac{V_{aby}V_{bx}-V_{abx}V_{by}}{V_{ay}V_{bx}-V_{ax}V_{by}}\der b\\
    \Gamma^a{}_b=&\frac{V_{aby}V_{bx}-V_{abx}V_{by}}{V_{ay}V_{bx}-V_{ax}V_{by}}\der a+\frac{V_{bby}V_{bx}-V_{bbx}V_{by}}{V_{ay}V_{bx}-V_{ax}V_{by}}\der b\\
    \Gamma^b{}_a=&\frac{V_{aay}V_{ax}-V_{aax}V_{ay}}{-V_{ay}V_{bx}+V_{ax}V_{by}}\der a+\frac{V_{aby}V_{ax}-V_{abx}V_{ay}}{-V_{ay}V_{bx}+V_{ax}V_{by}}\der b\\
     \Gamma^b{}_b=&\frac{V_{aby}V_{ax}-V_{abx}V_{ay}}{-V_{ay}V_{bx}+V_{ax}V_{by}}\der a+\frac{V_{bby}V_{ax}-V_{bbx}V_{ay}}{-V_{ay}V_{bx}+V_{ax}V_{by}}\der b\\
     \Gamma^x{}_x=&\frac{V_{bxx}V_{ay}-V_{axx}V_{by}}{V_{ay}V_{bx}-V_{ax}V_{by}}\der x+\frac{V_{bxy}V_{ay}-V_{axy}V_{by}}{V_{ay}V_{bx}-V_{ax}V_{by}}\der y\\
     \Gamma^x{}_y=&\frac{V_{bxy}V_{ay}-V_{axy}V_{by}}{V_{ay}V_{bx}-V_{ax}V_{by}}\der x+\frac{V_{byy}V_{ay}-V_{axy}V_{by}}{V_{ay}V_{bx}-V_{ax}V_{by}}\der y\\
     \Gamma^y{}_x=&\frac{V_{bxx}V_{ax}-V_{axx}V_{bx}}{-V_{ay}V_{bx}+V_{ax}V_{by}}\der x+\frac{V_{bxy}V_{ax}-V_{axy}V_{bx}}{-V_{ay}V_{bx}+V_{ax}V_{by}}\der y\\
     \Gamma^y{}_y=&\frac{V_{bxy}V_{ax}-V_{axy}V_{bx}}{-V_{ay}V_{bx}+V_{ax}V_{by}}\der x+\frac{V_{byy}V_{ax}-V_{ayy}V_{bx}}{-V_{ay}V_{bx}+V_{ax}V_{by}}\der y.
    \end{aligned}
\end{equation}
If $(M,g,K)$ is a para-K\"ahler-Einstein  structure written in a null adapted frame satisfying \eqref{pk} then the derivatives of the non-vanishing curvature coefficients are given by
\begin{subequations}\label{bi}
  \begin{align}
&\der\Psi_2'=0,\label{eq:bi1}\\
&\der\Psi_0=2\Psi_0(\Gamma^1{}_1-\Gamma^2{}_2)+4\Psi_1\Gamma^2{}_1+\Psi_{01}\al^1+\Psi_{11}\al^2-\Psi_{14}\bal^1+\Psi_{04}\bal^2, \label{eq:bi2}\\
&\der\Psi_1=\Psi_1(\Gamma^1{}_1-\Gamma^2{}_2)+\Psi_0\Gamma^1{}_2+3\Psi_2\Gamma^2{}_1+\Psi_{11}\al^1+\Psi_{21}\al^2-\Psi_{24}\bal^1+\Psi_{14}\bal^2,\label{eq:bi3}\\
&\der\Psi_2=2\Psi_1\Gamma^1{}_2+2\Psi_3\Gamma^2{}_1+\Psi_{21}\al^1+\Psi_{31}\al^2-\Psi_{34}\bal^1+\Psi_{24}\bal^2,\label{eq:bi3}\\
&\der\Psi_3=-\Psi_3(\Gamma^1{}_1-\Gamma^2{}_2)+3\Psi_2\Gamma^1{}_2+\Psi_4\Gamma^2{}_1+\Psi_{31}\al^1+\Psi_{41}\al^2-\Psi_{44}\bal^1+\Psi_{34}\bal^2,\label{eq:bi4}\\
&\der\Psi_4=-2\Psi_4(\Gamma^1{}_1-\Gamma^2{}_2)+4\Psi_3\Gamma^2{}_1+\Psi_{41}\al^1+\Psi_{42}\al^2+\Psi_{43}\bal^1+\Psi_{44}\bal^2, \label{eq:bi5}
\end{align}
\end{subequations}
for some functions $\Psi_{ia}$ on $M$ which represent the coframe derivatives of $\Psi_i$'s.
 
The differential relations among the quantities appearing in proof of the  Proposition \ref{rei} for Petrov type $II$ is as follows.
\begin{equation}
  \label{eq:Bianchies-TypeIID-1}
\begin{aligned}
  \der J_1=&J_1\Gamma^1_{~1}-J_1^2\al^1+J_{12}\al^2+J_{13}\bal^1+J_1J_2\bal^2,\\
  \der J_2=&-J_2\Gamma^2_{~2}-J_1J_2\al^1+(J_{13}+\Psi_2-\Psi_2')\al^2+J_{23}\bal^1+J_2^2\bal^2,\\
  \der J_3=& J_3\Gamma^2_{~2}+J_{31}\al^1+J_{32}\al^2+J_{33}\bal^1+J_{34}\bal^2,\\
  \der J_4=&-J_4\Gamma^1_{~1}-(J_{34}-\Psi_2+\Psi_2')\al^1+J_{42}\al^2+(J_4^2-J_2J_5+J_{54})\bal^1+J_{44}\bal^2,\\
  \der J_5=&- 2J_5\Gamma^1_{~1}+J_5\Gamma^2_{~2}-(J_{33}-J_3J_4+J_2J_6)\al^1+J_{52}\al^2+J_{53}\bal^1+J_{54}\bal^2,\\
  \der J_6=& -J_6\Gamma^1_{~1}+2J_6\Gamma^2_{~2}-(J_{32}-J_3^2+J_1J_6)\al^1+J_{62}\al^2+\\&(J_{52}-J_3J_5+J_4J_6+\Psi_4)\bal^1+(J_{42}+J_1J_5-J_3J_4)\bal^2,\\
    \der\Psi_2'=&0,\\
    \der\Psi_2=&-3J_1\Psi_2\al^1+3J_3\Psi_2\al^2+3J_4\Psi_2\bal^1+3J_2\Psi_2\bal^2,\\
    \der\Psi_4=&-2\Psi_4\Gamma^1_{~1}+2\Psi_4 \Gamma^2_{~2}-(3J_6\Psi_2+J_1\Psi_4)\al^1+\Psi_{42}\al^2+\Psi_{43}\bal^1+(3J_5\psi_2+J_2\Psi_4)\bal^2.
  \end{aligned}
\end{equation} 
 The curvature 2-form for the Cartan connection $\cB$ in \eqref{cacoB} is given by
\begin{equation}
  \label{eq:typeII-curv-YM-1}
  \tiny{\begin{aligned}
  K_B&=\\&\bma[c|c]
\begin{matrix}\tfrac12(J_2J_6-J_1J_5)&-\sqrt{\tfrac32|\Psi_2'|}~J_5\\
0&\tfrac12(J_1J_5-J_2J_6)\end{matrix}&0\\
\cmidrule(lr){1-2}
0&\begin{matrix}\tfrac12(J_1J_5-J_2J_6)&0\\
  -\sqrt{\tfrac32|\Psi_2'|}~J_6&\tfrac12(J_2J_6-J_1J_5)\end{matrix}\ema\sigma^2_-+\\
&\bma[c|c]
\begin{matrix}\tfrac14(J_2J_3-J_1J_4-2\Psi_2+2\Psi_2')&-\sqrt{\tfrac38|\Psi_2'|}~J_4\\
-\sqrt{\tfrac38|\Psi_2'|}~J_1&\tfrac14(J_1J_4-J_2J_3+2\Psi_2-2\Psi_2')\end{matrix}&0\\
\cmidrule(lr){1-2}
0&\begin{matrix}\tfrac14(J_1J_4-J_2J_3+2\Psi_2-2\Psi_2')&\sqrt{\tfrac38|\Psi_2'|}~J_2\\
  -\sqrt{\tfrac38|\Psi_2'|}~J_3&\tfrac14(J_2J_3-J_1J_4-2\Psi_2+2\Psi_2')\end{matrix}\ema\sigma^3_-+\\
&\bma[c|c]
\begin{matrix}\tfrac12 J_1J_3&\sqrt{\tfrac32|\Psi_2'|}~J_3\\
0&-\tfrac12J_1J_3\end{matrix}&0\\
\cmidrule(lr){1-2}
0&\begin{matrix}-\tfrac12J_1J_3&\sqrt{\tfrac32|\Psi_2'|}~J_1\\
  0&\tfrac12J_1J_2\end{matrix}\ema\sigma^1_++
\bma[c|c]
\begin{matrix}\tfrac12J_2J_4&0\\
\sqrt{\tfrac32|\Psi_2'|}~J_2&-\tfrac12J_2J_4\end{matrix}&0\\
\cmidrule(lr){1-2}
0&\begin{matrix}-\tfrac12J_2J_4&0\\
  -\sqrt{\tfrac32|\Psi_2'|}~J_4&\tfrac12J_2J_4\end{matrix}\ema\sigma^2_++\\
&\bma[c|c]
\begin{matrix}\tfrac14(J_2J_3+J_1J_4)&\sqrt{\tfrac38|\Psi_2'|}~J_4\\
\sqrt{\tfrac38|\Psi_2'|}~J_1&-\tfrac14(J_2J_3+J_1J_4)\end{matrix}&0\\
\cmidrule(lr){1-2}
0&\begin{matrix}-\tfrac14(J_2J_3+J_1J_4)&\sqrt{\tfrac38|\Psi_2'|}~J_2\\
-\sqrt{\tfrac38|\Psi_2'|}~J_3&\tfrac14(J_2J_3+J_1J_4)\end{matrix}\ema\sigma^3_+.
\end{aligned}}
\end{equation}

\bibliographystyle{alpha}   
\bibliography{pKE2}

\end{document}